\documentclass[11pt,letterpaper, reqno]{amsart}

\oddsidemargin0.25in
\evensidemargin0.25in
\textwidth6.00in
\topmargin0.00in
\textheight8.50in

\newcommand{\R}{\mathbb{R}}

\usepackage{amssymb}
\usepackage{amsthm}
\usepackage{amsxtra}
\usepackage{graphicx}
\usepackage{mathabx}
\usepackage{mathtools}
\usepackage{color}
\usepackage{braket}
\usepackage{physics}
\usepackage{enumerate}
\usepackage{setspace}

\newtheorem{theorem}{Theorem}

\newtheorem{proposition}[theorem]{Proposition}
\newtheorem{lemma}[theorem]{Lemma}
\newtheorem{corollary}[theorem]{Corollary}

\theoremstyle{remark}
\newtheorem{remark}[theorem]{Remark}

\numberwithin{equation}{section}

\numberwithin{theorem}{section}

\numberwithin{table}{section}

\numberwithin{figure}{section}

\ifx\pdfoutput\undefined
  \DeclareGraphicsExtensions{.pstex, .eps}
\else
  \ifx\pdfoutput\relax
    \DeclareGraphicsExtensions{.pstex, .eps}
  \else
    \ifnum\pdfoutput>0
      \DeclareGraphicsExtensions{.pdf}
    \else
      \DeclareGraphicsExtensions{.pstex, .eps}
    \fi
  \fi
\fi

\begin{document}

\title[Semi-classical limit of quantum free energy minimizers for NLH]{Semi-classical limit of quantum free energy minimizers for the gravitational Hartree equation}
\linespread{1.2}

\author{Woocheol Choi}
\address{Department of Mathematics, Sungkyunkwan University, Suwon 16419, Republic of Korea}
\email{choiwc@skku.edu}

\author{Younghun Hong}
\address{Department of Mathematics, Chung-Ang University, Seoul 06974, Republic of Korea}
\email{yhhong@cau.ac.kr}

\author{Jinmyoung Seok}
\address{Department of Mathematics, Kyonggi University, Suwon 16227, Republic of Korea}
\email{jmseok@kgu.ac.kr}

\subjclass[2010]{81Q20 (81Q10; 35J10; 35Q55; 35Q83)}

\keywords{semi-classical analysis, Energy-Casimir method, steady state, gravitational Hartree, Vlasov-Poisson}

\begin{abstract}

For the gravitational Vlasov-Poisson equation, Guo and Rein constructed a class of classical isotropic states as minimizers of free energies (or energy-Casimir functionals) under mass constraints \cite{GuoRein1}. For the quantum counterpart, that is, the gravitational Hartree equation, isotropic states are constructed as free energy minimizers by Aki, Dolbeault and Sparber \cite{ADS}. In this paper, we are concerned with the correspondence between quantum and classical isotropic states. Precisely, we prove that as the Planck constant $\hbar$ goes to zero, free energy minimizers for the Hartree equation converge to those for the Vlasov-Poisson equation in terms of potential functions as well as via the {Husimi} transform and the T\"oplitz quantization.
\end{abstract}

\maketitle

\section{Introduction}\label{sec: introduction}

\subsection{Background}\label{sec: background}
We consider the gravitational Vlasov-Poisson equation
\begin{equation}\label{gVP}
\partial_t f+p\cdot\nabla_qf+\nabla_q(\tfrac{1}{|\cdot|}*\rho_f)\cdot\nabla_p f=0
\end{equation}
where $f=f(t,q,p):\mathbb{R}\times\mathbb{R}^6\to [0,\infty)$ is a phase-space distribution, and 
$$\rho_f=\int_{\mathbb{R}^3}f(\cdot,p)dp$$
denotes the corresponding density function. This equation describes the mean-field dynamics of a large number of collisionless classical particles interacting each other by their mutual gravitational forces. In astrophysics, it provides a simpler but fairly accurate description of stellar dynamics whose size possibly ranges from $10^8$ stars for dwarfs to $10^{14}$ stars for giants. Here, all pair interactions are approximated by the self-generated potential $-\frac{1}{|\cdot|}*\rho_f$.

The initial-value problem for \eqref{gVP} possesses a unique global classical solution \cite{Pffaffelmoser, Schaeffer, Horst, LioPer}. The nonlinear evolution preserves the mass
$$\mathcal{M}(f):=\int_{\mathbb{R}^6}f(q,p)dqdp$$
and the energy
$$\mathcal{E}(f):=\frac{1}{2}\int_{\mathbb{R}^6}|p|^2f(q,p)dqdp-\frac{1}{2}\int_{\mathbb{R}^6}\frac{\rho_f(x)\rho_f(y)}{|x-y|}dxdy.$$
Moreover, the measure preserving property of the characteristics derives conservation of a Casimir functional
$$\mathcal{C}(f):=\int_{\mathbb{R}^6}\beta\big(f(q,p)\big)dqdp$$
with any given non-negative function $\beta:[0,\infty)\to[0,\infty)$ (see \cite[Chapter 4]{Glassey}). Combining the energy and a Casimir functional, the \textit{energy-Casimir functional}  is defined by
$$\mathcal{J}(f):=\mathcal{E}(f)+\mathcal{C}(f).$$
The quantity $-\mathcal{C}(f)$ also can be considered as a (generalized) entropy, including the standard entropy functional
$$-\int_{\mathbb{R}^6} (f\ln f)(q,p) dqdp,$$
and thus the energy-Casimir functional $\mathcal{J}(f)$ may be called the (generalized) \textit{free energy}.

In many different physical contexts, energy-Casimir/free energy functionals  have played a powerful role to construct stable steady states. As for the Vlasov-Poisson equation \eqref{gVP}, Guo and Rein established a large class of stable isotropic states by the energy-Casimir method \cite{GuoRein1}. This important work is in fact the starting point of our discussion, but we state below a slightly clearer version in the survey article \cite[Theorem 1.1]{Rein}.

We now assume that
\begin{spacing}{1.5}
\noindent \textbf{(A1)} $\beta\in C^1([0,\infty))$ is strictly convex, and $\beta(0)=\beta'(0)=0$,\\
\textbf{(A2)} There exists $m_1>\frac{5}{3}$ such that $\beta(s)\geq C_1s^{m_1}$ for all large $s\geq 0$,\\
\textbf{(A3)} There exists $m_2>\frac{5}{3}$ such that $\beta(s)\leq C_2s^{m_2}$ for all small $s\geq 0$.
\end{spacing}
\noindent For a function $\beta:[0,\infty)\to[0,\infty)$ satisfying \textbf{(A1)}$-$\textbf{(A3)}, we introduce the auxiliary  function $\tilde{\beta}:\mathbb{R}\to\mathbb{R}$ by
\begin{equation}\label{beta tilde}
\tilde{\beta}(s):=\left\{\begin{aligned}
&(\beta')^{-1}(s)&&\textup{for }s\geq0,\\
&0&&\textup{for }s< 0.
\end{aligned}\right.
\end{equation}
The following theorem asserts that steady states can be  constructed as minimizers for the variational problem
\begin{equation}\label{classical VP}
\mathcal{J}_{\min} :=\min_{f\in\mathcal{A}_M}\mathcal{J}(f)
\end{equation}
where $M>0$ and the admissible set $\mathcal{A}_M$ is given by
$$\mathcal{A}_M := \Big\{ f: \mathbb{R}^6\to[0,\infty):\ \mathcal{M}(f)=M\textup{ and }\mathcal{J}(f) < \infty\Big\}.$$
\begin{theorem}[Guo-Rein \cite{GuoRein1, Rein}]\label{classical theorem}
Let $M>0$. If $\beta$ satisfies \textup{\textbf{(A1)}$-$\textbf{(A3)}}, then the following statements hold.
\begin{enumerate}[$(i)$]
\item The minimum free energy $\mathcal{J}_{\min}$ is strictly negative.
\item The minimization problem \eqref{classical VP} possesses a minimizer.
\item A minimizer $\mathcal{Q}$ solves the self-consistent equation
\begin{equation}\label{classical EL}
\mathcal{Q} =\tilde{\beta}(-\mu-E), \quad\textup{with some }\mu>0, 
\end{equation}
where $E=\frac{|p|^2}{2}-U(q)$ is the local total energy with the potential $-U=-\frac{1}{|\cdot|}*\rho_{\mathcal{Q}}$.
\end{enumerate}
\end{theorem}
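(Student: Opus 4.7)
My plan is the direct method for a constrained variational problem with an attractive nonlocal term, in three stages mirroring (i)--(iii).

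For (i), I would use a two-parameter scaling. Fix a smooth compactly supported profile $f_1$ with $\mathcal{M}(f_1)=1$ and set
\[
f_{\epsilon,B}(q,p)\defeq M\epsilon^{-6}f_1\bigl(\epsilon^{-1}Bq,\,\epsilon^{-1}B^{-1}p\bigr)\in\mathcal{A}_M.
\]
A change of variables yields kinetic $\sim MB^2\epsilon^2$, minus-potential $\sim M^2B\epsilon^{-1}$, and, once $M\epsilon^{-6}$ falls below the threshold where (A3) applies, Casimir $\le CM^{m_2}\epsilon^{6(1-m_2)}$. Optimizing the quadratic in $B$ at fixed $\epsilon$ gives
\[
\mathcal{J}_{\min}(M)\le -cM^3\epsilon^{-4}+CM^{m_2}\epsilon^{6(1-m_2)}.
\]
Because $m_2>5/3$ forces $6(1-m_2)<-4$, the negative term dominates as $\epsilon\to\infty$, so $\mathcal{J}_{\min}<0$. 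Further optimization in $\epsilon$ sharpens this to $\mathcal{J}_{\min}(M)\le -cM^{\sigma}$ with $\sigma=(7m_2-9)/(3m_2-5)>7/3$, which I will need in (ii).

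For (ii), I would establish boundedness below and compactness. The lower bound (A2) together with $\mathcal{M}(f_n)=M$ gives a uniform $L^1\cap L^{m_1}$ bound on any minimizing sequence $\{f_n\}$; integrating out $p$ against the kinetic term and interpolating yields a uniform $L^{6/5}$ bound on $\rho_{f_n}$ (this is where $m_1>5/3$ enters), and Hardy--Littlewood--Sobolev then controls the potential by $\|\rho_{f_n}\|_{L^{6/5}}^2$, bounding $\mathcal{J}$ from below and both kinetic and Casimir pieces from above. For compactness I would translate $f_n$ to center $\rho_{f_n}$ and apply Lions' concentration-compactness principle to $\rho_{f_n}$: vanishing is excluded by $\mathcal{J}_{\min}<0$, while dichotomy is excluded by the strict subadditivity $\mathcal{J}_{\min}(M)<\mathcal{J}_{\min}(M_1)+\mathcal{J}_{\min}(M_2)$ for $M_i>0$, $M_1+M_2=M$, which follows from the super-linear upper bound $\mathcal{J}_{\min}(M)\le-cM^\sigma$ from (i). Tightness of $\rho_{f_n}$, together with Fatou and convexity of $\beta$ (for weak lower semicontinuity of the kinetic and Casimir terms) and strong $L^{6/5}$-convergence of $\rho_{f_n}$ (for continuity of the potential), produces a minimizer $\mathcal{Q}\in\mathcal{A}_M$.

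For (iii), I would derive the Euler--Lagrange equation by admissible variations. Given $\phi\ge0$ of compact support, $\mathcal{Q}_t=\mathcal{Q}+t\bigl(\phi-M^{-1}(\int\phi)\mathcal{Q}\bigr)$ is admissible for $|t|$ small where $\mathcal{Q}>0$ and for $t\ge0$ where $\mathcal{Q}=0$. The first-order optimality yields $\beta'(\mathcal{Q})+\tfrac{|p|^2}{2}-U=-\mu$ a.e.\ on $\{\mathcal{Q}>0\}$ and $\tfrac{|p|^2}{2}-U\ge-\mu$ on $\{\mathcal{Q}=0\}$; by (A1), $\beta'$ is a strictly increasing bijection $[0,\infty)\to[0,\infty)$ with $\beta'(0)=0$, so the two cases merge into $\mathcal{Q}=\tilde{\beta}(-\mu-E)$. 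Positivity of $\mu$ follows from combining two mass-preserving virial identities---one from the spatial scaling $\mathcal{Q}\mapsto(1+s)^3\mathcal{Q}((1+s)q,p)$ and one from the momentum scaling $\mathcal{Q}\mapsto(1+s)^{-3}\mathcal{Q}(q,(1+s)^{-1}p)$---with $\mathcal{J}_{\min}<0$. The main obstacle is the compactness step in (ii); excluding dichotomy is where the super-linear bound from (i) is essential, and the joint conditions $m_1,m_2>5/3$ are used in an essential, somewhat delicate way both for the $L^{6/5}$ control of $\rho$ and for this super-linearity.
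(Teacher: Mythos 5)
The paper does not prove Theorem~\ref{classical theorem}; it is quoted verbatim with citations to Guo--Rein \cite{GuoRein1} and Rein's survey \cite{Rein}, so there is no internal proof to compare against. Your reconstruction follows the standard Guo--Rein route (scaling for negativity, concentration-compactness for existence, one-sided variations and virial identities for the Euler--Lagrange equation and the sign of $\mu$), and parts (i) and (iii) of your sketch are essentially sound. In particular, the computation giving kinetic $\sim MB^2\epsilon^2$, potential $\sim -M^2B\epsilon^{-1}$, Casimir $\lesssim M^{m_2}\epsilon^{6(1-m_2)}$, the optimization in $B$ then $\epsilon$, and the resulting exponent $\sigma=(7m_2-9)/(3m_2-5)$ all check out; likewise, combining the two mass-preserving scalings does give the virial relation $\mathcal{E}_{\rm pot}=-2\mathcal{E}_{\rm kin}$, which together with $\mathcal{J}(\mathcal{Q})<0$ and the formula $-\mu M=\int\beta'(\mathcal{Q})\mathcal{Q}+\mathcal{E}_{\rm kin}+2\mathcal{E}_{\rm pot}$ forces $\mu>0$.

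The genuine gap is in part (ii): the claim that strict subadditivity $\mathcal{J}_{\min}(M)<\mathcal{J}_{\min}(M_1)+\mathcal{J}_{\min}(M_2)$ follows from the super-linear \emph{upper} bound $\mathcal{J}_{\min}(M)\le -cM^\sigma$ is false as stated. A one-sided upper bound controls $\mathcal{J}_{\min}(M)$ but gives no control on $\mathcal{J}_{\min}(M_1)$ or $\mathcal{J}_{\min}(M_2)$ from below, so there is nothing to compare against; if you only pair it with the linear lower bound $\mathcal{J}_{\min}(M_i)\gtrsim -M_i$ coming from coercivity, you obtain strict subadditivity only for $M$ sufficiently large, not for all $M>0$. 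What the Guo--Rein/Rein argument actually uses is a \emph{two-sided scaling comparison} between $\mathcal{J}_{\min}$ at different masses, of the form $\mathcal{J}_{\min}(\bar M)\ge(\bar M/M)^{\sigma}\mathcal{J}_{\min}(M)$ for $0<\bar M\le M$ and some $\sigma>1$ (in Rein's survey $\sigma=5/3$). This is obtained not by optimizing a trial state, but by rescaling an arbitrary competitor of mass $M$ to one of mass $\bar M$ and tracking how each term of $\mathcal{J}$ transforms, together with $\mathcal{J}_{\min}<0$; combined with the strict concavity of $t\mapsto t^\sigma$ on $[0,1]$ and $\mathcal{J}_{\min}(M)<0$ it yields $\mathcal{J}_{\min}(M_1)+\mathcal{J}_{\min}(M_2)\ge[(M_1/M)^\sigma+(M_2/M)^\sigma]\mathcal{J}_{\min}(M)>\mathcal{J}_{\min}(M)$. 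Without this comparison lemma the exclusion of dichotomy is not justified, and the existence proof does not close. Two minor further points: the surjectivity of $\beta'$ onto $[0,\infty)$ needed to invert into $\tilde\beta$ uses (A2) (superlinear growth of $\beta$), not (A1) alone; and the role of $m_1>5/3$ is not really the $L^{6/5}$ bound on $\rho$ (which already holds at $m_1=5/3$) but the sub-unit exponents in the Lieb--Thirring/interpolation absorption of the potential term into the kinetic and Casimir terms.
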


\begin{remark}\label{Guo-Rein remark}
$(i)$ By the formula \eqref{classical EL}, minimizers in Theorem \ref{classical theorem} solve the Vlasov-Poisson equation \eqref{gVP}. Moreover, they are known to be dynamically stable \cite[Theorem 6.1]{Rein}. See Proposition \ref{prop-Q} for their additional properties.\\
$(ii)$ For instance, $\beta(s)=s^m$ with $m>\frac{5}{3}$ satisfies the assumptions in Theorem \ref{classical theorem}. In this case, $\mathcal{Q}$ is a polytrope $(E_0-E)_+^k$, where $0<k=\frac{1}{m-1}<\frac{3}{2}$.
\end{remark}

Isotropic states of the form \eqref{classical EL} have been received considerable interests from both mathematical and physical communities due to their stability. A famous conjecture asserts orbital stability of any non-increasing radially symmetric steady states for the Vlasov-Poisson equation \cite{BT}. 
Since the breakthrough work of Antonov \cite{Antonov, Antonov2}, their linear stability has been investigated. 
Later, the concentration-compactness principle \cite{Lions1, Lions2} has been applied to establish nonlinear stability for global minimizers of energy-Casimir (respectively, energy) functionals under mass (respectively, mass-Casimir) constraints  \cite{DSS, Guo1, Guo2, GuoRein1, GuoRein2, LMR2, Rein, Schaeffer2, Wolansky}. The nonlinear stability of steady states, not necessarily global minimizers, is firstly studied in \cite{GuoRein4} for the King model. In the celebrated works of Lemou, M\'{e}hats and Rapha\"el \cite{LMR3, LMR4}, the conjecture has finally been settled. We refer to a comprehensive survey article \cite{Mouhot} for the history of the problem.

On the other hand, in some physical situations, quantum effects are needed to be taken in account for gravitational particles. 
The quantum-mechanical counterpart of the Vlasov-Poisson equation \eqref{gVP} is the (non-relativistic) gravitational Hartree equation
\begin{equation}\label{gSP}
i\hbar\partial_t\gamma=\big[-\tfrac{\hbar^2}{2}\Delta-\tfrac{1}{|\cdot|}*\rho_\gamma^\hbar, \gamma\big],
\end{equation}
where $\gamma=\gamma(t)$ is a self-adjoint compact operator acting on $L^2(\mathbb{R}^3)$,
$$\rho_\gamma^\hbar(x):=(2\pi\hbar)^3\gamma(x,x)$$
is the density function of $\gamma$, and $\gamma(x,x')$ denotes the kernel of the integral operator $\gamma$, i.e.,
$$\gamma[\phi](x)=\int_{\mathbb{R}^3}\gamma(x,x')\phi(x')dx'.$$
Here, a small constant $\hbar>0$ represents the Planck constant. By the eigenfunction expansion, the von Neumann equation \eqref{gSP} is equivalent to the system of (possibly infinitely many) coupled equations
$$i\hbar\partial_t\phi_j=-\tfrac{\hbar^2}{2}\Delta\phi_j-(\tfrac{1}{|x|}*\rho^\hbar)\phi_j,\quad j=1, 2, 3, ..., $$
where $\{\phi_j\}_{j=1}^\infty$ is a set of $L^2$-orthonormal functions, and $\rho^\hbar=(2\pi\hbar)^3\sum_{j=1}^\infty\lambda_j|\phi_j|^2$ with $\lambda_j\to0$ (see Section \ref{sec: operator spaces}). This model gives a mean-field description of self-gravitating quantum particles, like boson stars.

For basic well-posedness results for the gravitational Hartree equation \eqref{gSP}, we refer to \cite{BDF1, BDF2, Castella, Chadam, DF}. An important remark is that this quantum model is closely linked to the classical model \eqref{gVP} in the semi-classical limit $\hbar\to0$. In \cite{LP}, Lions and Paul established weak-* convergence from the Hartree to the Vlasov-Poisson equation via the Wigner/Husimi transform. The weak-* convergence can be improved to strong convergence  provided that both the interaction potential and initial data are sufficiently smooth \cite{AKN1, AKN2}. As for fermions, the Vlasov equation is derived from the fermionic linear $N$-body Schr\"odinger equation in the limit $\hbar=\frac{1}{N^{1/3}}\to0$ \cite{GMP, BPSS, BJPSS}.

By the quantum-classical correspondence principle, the quantum model \eqref{gSP} has the analogous conservation laws,
$$\mathcal{M}^\hbar(\gamma):=\textup{Tr}^\hbar\gamma\quad\textup{(mass)},\quad \mathcal{C}^\hbar(\gamma):=\textup{Tr}^\hbar\big(\beta(\gamma)\big)\quad\textup{(Casimir)},$$
and
$$\mathcal{E}^\hbar(\gamma):=\textup{Tr}^\hbar\big((-\tfrac{\hbar^2}{2}\Delta)\gamma\big)-\frac{1}{2}\int_{\mathbb{R}^6}\frac{\rho_\gamma^\hbar(x)\rho_\gamma^\hbar(y)}{|x-y|}dxdy\quad\textup{(energy)},$$
where $\textup{Tr}^\hbar(\cdot)=(2\pi\hbar)^3\textup{Tr}(\cdot)$. The \textit{quantum energy-Casimir functional} (also called the \textit{free energy}) is defined by
$$\mathcal{J}^\hbar(\gamma):=\mathcal{E}^\hbar(\gamma)+\mathcal{C}^\hbar(\gamma).$$
Therefore, one may expect that orbitally stable steady states can be constructed from the analogous quantum-mechanical variational problem
\begin{equation}\label{quantum VP}
\mathcal{J}^\hbar_{\min}:=\min_{\gamma \in \mathcal{A}_M^\hbar}\mathcal{J}^{\hbar}(\gamma)
\end{equation}
with the admissible set
$$\mathcal{A}_M^\hbar := \Big\{ \gamma \in \mathcal{B}(L^2(\R^3)): \gamma\geq0,\text{ compact, self-adjoint, } \mathcal{M}^\hbar(\gamma)=M\textup{ and }\mathcal{J}^\hbar(\gamma) < \infty \Big\}.$$
The following theorem gives an affirmative answer but assuming 
\begin{spacing}{1.5}
\noindent \textbf{(A4)} $\sup_{s > 0}\frac{s\beta'(s)}{\beta(s)} \leq 3$,
\end{spacing}
\noindent instead of \textbf{(A2)} and $\textbf{(A3)}$.

\begin{theorem}[Aki-Dolbeault-Sparber \cite{ADS}]\label{result-ADS}
Let $M>0$. If $\beta$ satisfies \textup{\textbf{(A1)}} and \textup{\textbf{(A4)}}, then the following statements hold for all $\hbar>0$.
\begin{enumerate}[$(i)$]
\item If the minimum free energy $\mathcal{J}_{\min}^\hbar$ is strictly negative, then the quantum minimization problem \eqref{quantum VP} possesses a minimizer.
\item A minimizer $\mathcal{Q}_\hbar$ solves the self-consistent equation
\begin{equation}\label{quantum EL}
\mathcal{Q}_\hbar =\tilde{\beta}(-\mu_\hbar-\hat{E}_\hbar),\quad\textup{with some }\mu_\hbar>0,
\end{equation}
where $\hat{E}_\hbar=-\frac{\hbar^2}{2}\Delta-U_\hbar$ is the quantum Hamiltonian with the mean-field potential $-U_\hbar=-\frac{1}{|x|}*\rho_{\mathcal{Q}_\hbar}^\hbar$.
\end{enumerate}
\end{theorem}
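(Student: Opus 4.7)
The plan is to prove (i) by the direct method of the calculus of variations combined with a concentration-compactness argument at the level of the density $\rho_\gamma^\hbar$, and then to derive the Euler-Lagrange equation (ii) by a first-variation computation at the operator level.

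For coercivity on $\mathcal{A}_M^\hbar$, I combine the Hardy-Littlewood-Sobolev inequality $\iint\rho(x)\rho(y)/|x-y|\,dx\,dy\lesssim\|\rho\|_{L^{6/5}}^2$ with Hoffmann-Ostenhof's bound $\textup{Tr}(-\Delta\gamma)\ge\|\nabla\sqrt{\rho_\gamma}\|_{L^2}^2$ and a Gagliardo-Nirenberg interpolation; the Coulomb self-interaction is then bounded by a sublinear power of the kinetic trace and a power of the fixed mass $M$, yielding $\mathcal{J}^\hbar_{\min}>-\infty$ (assumption (A4) ensures that $\mathcal{C}^\hbar$ is well defined on $\mathcal{A}_M^\hbar$ and behaves compatibly under rescalings). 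Taking a minimizing sequence $\{\gamma_n\}$, the bound above provides uniform control of $\textup{Tr}^\hbar(-\tfrac{\hbar^2}{2}\Delta\gamma_n)$, of $\mathcal{C}^\hbar(\gamma_n)$, and hence of $\rho^\hbar_{\gamma_n}$ in $L^1\cap L^p$ for some $p>1$. I then apply Lions' concentration-compactness principle: vanishing is precluded by the hypothesis $\mathcal{J}^\hbar_{\min}<0$, since otherwise the attractive Coulomb term would vanish and the remaining terms are non-negative; dichotomy is ruled out by the strict subadditivity
\[
\mathcal{J}^\hbar_{\min}(M)<\mathcal{J}^\hbar_{\min}(M_1)+\mathcal{J}^\hbar_{\min}(M_2),\qquad M_1+M_2=M,\ M_1,M_2>0,
\]
proved by a mass-rescaling of a near-minimizer that exploits the attractive interaction. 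After translation, a subsequence converges to $\mathcal{Q}_\hbar\in\mathcal{A}_M^\hbar$ in appropriate topologies, and weak lower semicontinuity of each part of $\mathcal{J}^\hbar$ (the Casimir being lsc from convexity of $\beta$ applied to the eigenvalues) yields $\mathcal{J}^\hbar(\mathcal{Q}_\hbar)=\mathcal{J}^\hbar_{\min}$.

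For (ii), I consider admissible self-adjoint mass-preserving perturbations $\mathcal{Q}_\hbar+\varepsilon K$ with $\textup{Tr}^\hbar K=0$. Using functional calculus for the Casimir derivative and the definition of $U_\hbar$ for the mean-field term,
\[
\frac{d}{d\varepsilon}\bigg|_{\varepsilon=0}\mathcal{J}^\hbar(\mathcal{Q}_\hbar+\varepsilon K)=\textup{Tr}^\hbar\!\bigl((\hat{E}_\hbar+\beta'(\mathcal{Q}_\hbar))K\bigr).
\]
The mass constraint introduces a Lagrange multiplier $\mu_\hbar\in\R$ with $\hat{E}_\hbar+\beta'(\mathcal{Q}_\hbar)=-\mu_\hbar$ on $\textup{Ran}(\mathcal{Q}_\hbar)$ and $\hat{E}_\hbar\ge -\mu_\hbar$ on $\ker(\mathcal{Q}_\hbar)$ (since only one-sided perturbations preserve positivity on the kernel, and $\beta'(0)=0$). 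As $\beta'$ is strictly increasing on $[0,\infty)$, inverting gives $\mathcal{Q}_\hbar=\tilde\beta(-\mu_\hbar-\hat{E}_\hbar)$. Positivity $\mu_\hbar>0$ follows from pairing the Euler-Lagrange equation with $\mathcal{Q}_\hbar$, a Pohozaev-type identity derived from the scaling $\rho(x)\mapsto\lambda^3\rho(\lambda x)$, and the hypothesis $\mathcal{J}^\hbar_{\min}<0$ combined with (A4).

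The main obstacle I expect is the concentration-compactness step: Lions' principle, classically stated for functions, must be transported to the operator setting via the densities $\rho^\hbar_{\gamma_n}$ while one simultaneously controls the kinetic trace and the Casimir functional through the weak limit. Establishing the strict subadditivity without the lower power bound (A2)--(A3)---replaced here only by the upper bound (A4)---requires careful scaling analysis and is the technical heart of the argument.
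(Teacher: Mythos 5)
This statement is not proved in the paper: it is Theorem~\ref{result-ADS}, which the authors attribute entirely to Aki--Dolbeault--Sparber~\cite{ADS} and explicitly describe (in the remark following it) as ``nothing but a reformulation of Theorem~1.1 in~\cite{ADS}.'' There is therefore no ``paper's own proof'' to compare your argument against; the paper simply cites~\cite{ADS} and, in Appendix~\ref{sec: thermal effects}, restates the original thermal-effects formulation.

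That said, your sketch is a plausible reconstruction of how a proof would go, and it is in fact consistent with the one hint the paper does give about the~\cite{ADS} argument: Remark~\ref{LT remark}~(iv) records that~\cite{ADS} builds the minimizer using the operator Sobolev bound $\|\rho_\gamma\|_{L^3}\lesssim\|\sqrt{-\Delta}\gamma\sqrt{-\Delta}\|_{\mathfrak{S}^1}$ (no orthogonality gain), not the Lieb--Thirring inequality that the present paper relies on. Your coercivity step via Hoffmann--Ostenhof $\textup{Tr}(-\Delta\gamma)\ge\|\nabla\sqrt{\rho_\gamma}\|_{L^2}^2$ followed by Sobolev/Gagliardo--Nirenberg interpolation is equivalent to that route, so you have correctly identified the functional-analytic input. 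The remaining steps (ruling out vanishing from $\mathcal{J}^\hbar_{\min}<0$, ruling out dichotomy via strict subadditivity, passing to the weak limit, then deriving the self-consistent equation by a Lagrange-multiplier/KKT argument on $\textup{Ran}(\mathcal{Q}_\hbar)$ versus $\ker(\mathcal{Q}_\hbar)$) are standard and match the outline one would expect. Two details that deserve more care than your sketch acknowledges: (a) in the Euler--Lagrange step you must show that $\hat{E}_\hbar$ and $\mathcal{Q}_\hbar$ can be simultaneously diagonalized before you can ``invert'' $\beta'$ fiberwise, which requires handling off-diagonal perturbations between $\textup{Ran}$ and $\ker$; and (b) strict positivity $\mu_\hbar>0$ (as opposed to $\mu_\hbar\ge 0$) does not follow merely from the Pohozaev identity---one needs to rule out $\mu_\hbar=0$ separately, e.g.\ by observing that the Coulomb tail of $U_\hbar$ forces infinitely many eigenvalues of $\hat{E}_\hbar$ to accumulate at $0$ and then invoking compactness/trace-class requirements on $\mathcal{Q}_\hbar$. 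For the precise argument and the verification of strict subadditivity under (A1)+(A4) alone you should consult~\cite{ADS} directly.
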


\begin{remark} $(i)$ Theorem \ref{result-ADS} is nothing but a reformulation of Theorem 1.1 in Aki-Dolbeault-Sparber \cite{ADS}, where quantum thermal effects are emphasized. For readers' convenience, we give the precise statement in Appendix \ref{sec: thermal effects}.\\
$(ii)$ Existence of a minimizer is proved under the condition that $\mathcal{J}_{\min}^\hbar<0$. 
This is the case if we take $\beta(s) = T\beta_0(s)$ for fixed $\beta_0$ and sufficiently small $T > 0$.
In case of fixed $T = 1$, it will be shown that a minimizer exists provided that all \textbf{(A1)}-\textbf{(A4)} are satisfied and $\hbar>0$ is small enough (see Theorem \ref{thm1} $(i)$). \\ 
$(iii)$ The minimizer $\mathcal{Q}_\hbar$, given in \eqref{quantum EL}, is a steady state for the gravitational Hartree equation \eqref{gSP}. It includes the polytropes of the form $(E_{\hbar,0}-\hat{E}_\hbar)_+^k$ with $\frac{1}{2}\leq k<\infty$. Note that the range of $k$ is different from the classical case (see Remark \ref{Guo-Rein remark} $(ii)$).\\
$(iv)$ The well-known Cazenave-Lions method \cite{CazLi} applies to see that the set of all minimizers for the problem \eqref{quantum VP} is orbitally stable \cite{ADS}. It however does not directly imply the orbital stability of a minimizer $\mathcal{Q}_\hbar$ itself unless the uniqueness of $\mathcal{Q}_\hbar$, whose proof is still open, is guaranteed.\\
$(iv)$ For the repulsive Hartree equation, a steady state is constructed on a bounded domain \cite{MRW}. For the Hartree-Fock model, existence of free energy minimizer is established for the atomic model \cite{DFL}.
\end{remark}

\subsection{Main result}
A natural question then is to ask whether the above classical and quantum models are consistent as quantum effects (including thermal effects) become negligible. Physically, answering this question would support the quantum model in view of Bohr's correspondence principle. 

In this article, we particularly focus on classical and quantum free energy minimizers under a same mass constraint. From now on, we assume all \textbf{(A1)}-\textbf{(A4)} so that existence of both classical and quantum minimizers are guaranteed. Our main theorem addresses convergence from quantum to classical free energy minimizers in semi-classical limit.

\begin{theorem}[Main theorem]\label{thm1}
Fix $M>0$. Suppose that $\beta$ satisfies \textup{\textbf{(A1)}}-\textup{\textbf{(A4)}}. Then, the following statements hold.
\begin{enumerate}[$(i)$]
\item (Existence of a quantum free energy minimizer) For all sufficiently small $\hbar>0$, the quantum variational problem \eqref{quantum VP} possesses a quantum free energy minimizer of the form 
$$\mathcal{Q}_{\hbar}=\tilde{\beta}\big(-\mu_\hbar-(-\tfrac{\hbar^2}{2}\Delta-U_\hbar)\big).$$
\item (Convergence of the minimum free energy)
$$\lim_{\hbar\to0}\mathcal{J}^{\hbar}_{\min} = \mathcal{J}_{\min}<0.$$
\item (Convergence of the potential function) For any sequence $\{\hbar_n\}_{n=1}^\infty$ such that $\hbar_n\to 0$, there exist a classical free energy minimizer
$$\mathcal{Q}=\tilde{\beta}\big(-\mu-(\tfrac{|p|^2}{2}-U(q))\big)$$
for \eqref{classical VP}, a subsequence of $\{\hbar_n\}_{n=1}^\infty$ $($but still denoted by $\{\hbar_n\}_{n=1}^\infty$$)$ and $\{x_n\}_{n=1}^\infty$ such that $\underset{n\to\infty}\lim \mu_{\hbar_n}=\mu $, and  for any $0\leq\alpha<\frac{1}{5}$, 
$$\lim_{n\to\infty}\|\nabla(U_{\hbar_n}(\cdot-x_n)-U)\|_{L^2(\mathbb{R}^3)}+\|U_{\hbar_n}(\cdot-x_n)-U\|_{C^{0,\alpha}(\mathbb{R}^3)}=0.$$
\item (Convergence via {Husimi} transform and T\"oplitz quantization)
Let {$\tilde{W}_\hbar[\cdot]$} and $\textup{Op}^T_\hbar[\cdot]$ denote the {Husimi} transform and T\"oplitz quantization, respectively (see Appendix A for their definitions). Then,
\[
\lim_{n\to\infty}\|{\tilde{W}_{\hbar_n}}[\tau_{-x_n}\mathcal{Q}_{\hbar_n}\tau_{x_n}] - \mathcal{Q}\|_{L^\infty(\R^6)}  
+\|\tau_{-x_n}\mathcal{Q}_{\hbar_n}\tau_{x_n} - \textup{Op}^T_{\hbar_n}[\mathcal{Q}]\|_{\mathcal{B}(L^2)}=0,
\]
where $\tau_{x}$ is the translation operator given as $f \mapsto f(\cdot-x)$.
\end{enumerate}
\end{theorem}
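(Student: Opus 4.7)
The plan is a two-sided matching of the two variational problems via semiclassical quantization and dequantization. For the upper bound $\limsup_{\hbar\to0}\mathcal{J}_{\min}^\hbar\leq \mathcal{J}_{\min}$, I would take a classical minimizer $\mathcal{Q}$ from Theorem \ref{classical theorem} (truncated so that $\mathcal{Q}\in L^\infty$ at $o(1)$-cost in the free energy) and use the T\"oplitz state $\textup{Op}^T_\hbar[\mathcal{Q}]$ as a trial state. Standard semiclassical identities give $\mathcal{M}^\hbar(\textup{Op}^T_\hbar[\mathcal{Q}])=M+O(\hbar)$, $\textup{Tr}^\hbar(-\tfrac{\hbar^2}{2}\Delta\,\textup{Op}^T_\hbar[\mathcal{Q}])=\tfrac12\int|p|^2\mathcal{Q}\,dqdp+O(\hbar)$, $\rho^\hbar_{\textup{Op}^T_\hbar[\mathcal{Q}]}=\rho_\mathcal{Q}*g_\hbar\to\rho_\mathcal{Q}$, together with Berezin's inequality $\mathcal{C}^\hbar(\textup{Op}^T_\hbar[\mathcal{Q}])\leq\int\beta(\mathcal{Q})\,dqdp$. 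Combined with the Hardy--Littlewood--Sobolev continuity of the interaction term and a rescaling $1+O(\hbar)$ enforcing the exact mass constraint, this produces $\mathcal{J}^\hbar(\textup{Op}^T_\hbar[\mathcal{Q}])\leq\mathcal{J}(\mathcal{Q})+o(1)<0$, proving both the existence claim (i) via Theorem \ref{result-ADS} and the upper bound in (ii).

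For the matching lower bound and the compactness needed in (iii), I would run the reverse procedure. Starting from quantum minimizers $\mathcal{Q}_\hbar$, combine $\mathcal{J}^\hbar(\mathcal{Q}_\hbar)\leq 0$ with HLS and Lieb--Thirring--type inequalities to derive uniform bounds on the kinetic energy, the Casimir, and $\|\rho^\hbar_{\mathcal{Q}_\hbar}\|_{L^{5/3}\cap L^{m_2'}}$. Let $\tilde{f}_\hbar\geq 0$ denote the Husimi transform of $\mathcal{Q}_\hbar$; the companion identities $\mathcal{M}(\tilde{f}_\hbar)=M$, $\tfrac12\int|p|^2\tilde{f}_\hbar\,dqdp=\textup{Tr}^\hbar(-\tfrac{\hbar^2}{2}\Delta\mathcal{Q}_\hbar)+O(\hbar)$, $\rho_{\tilde{f}_\hbar}=\rho^\hbar_{\mathcal{Q}_\hbar}*g_\hbar$, together with Lieb's inequality $\int\beta(\tilde{f}_\hbar)\,dqdp\leq\mathcal{C}^\hbar(\mathcal{Q}_\hbar)$, yield $\mathcal{J}(\tilde{f}_\hbar)\leq\mathcal{J}^\hbar(\mathcal{Q}_\hbar)+o(1)$. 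Hence $\{\tilde{f}_{\hbar_n}\}$ is, up to an $o(1)$ mass rescaling, a classical minimizing sequence. Lions' concentration-compactness then produces translations $x_n$ and a classical minimizer $\mathcal{Q}$ with $\tilde{f}_{\hbar_n}(\cdot-x_n,\cdot)\to\mathcal{Q}$; the key input ruling out dichotomy is strict subadditivity of $M\mapsto \mathcal{J}_{\min}(M)$, a standard Guo--Rein scaling calculation. The potential convergence in (iii) then follows from the Poisson equation $-\Delta U_{\hbar_n}=4\pi\rho^\hbar_{\mathcal{Q}_{\hbar_n}}$ and strong convergence of the densities in $L^{5/3}\cap L^{m_2'}$, with the threshold $\alpha<1/5$ reflecting the sharp Morrey embedding $W^{2,5/3}\hookrightarrow C^{0,1/5}$.

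For (iv), I would exploit the Euler--Lagrange formulas. For the operator-norm bound $\|\tau_{-x_n}\mathcal{Q}_{\hbar_n}\tau_{x_n}-\textup{Op}^T_{\hbar_n}[\mathcal{Q}]\|_{\mathcal{B}(L^2)}\to 0$, write $\mathcal{Q}_{\hbar_n}=\tilde\beta(-\mu_{\hbar_n}-\hat E_{\hbar_n})$ from Theorem \ref{result-ADS} and compare with $\textup{Op}^T_{\hbar_n}[\tilde\beta(-\mu-E)]$, which via semiclassical functional calculus of Helffer--Sj\"ostrand type approximates $\tilde\beta(-\mu-\hat E_{\hbar_n})$. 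Since $\mu_{\hbar_n}\to\mu$ and $U_{\hbar_n}(\cdot-x_n)\to U$ in $C^{0,\alpha}$ by (iii), continuity of the functional calculus in the perturbed Hamiltonian closes the estimate. The $L^\infty$ Wigner convergence then follows from the operator-norm bound $\|W_\hbar[\gamma]\|_{L^\infty}\lesssim \|\gamma\|_{\mathcal{B}(L^2)}$, together with the fact that $\textup{Op}^T_{\hbar_n}[\mathcal{Q}]$ has Wigner symbol equal to $\mathcal{Q}*g_{\hbar_n}\to \mathcal{Q}$ uniformly, granted the continuity of $\mathcal{Q}=\tilde\beta(-\mu-E)$. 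The main obstacle is the low regularity of $\tilde\beta$, which is merely H\"older at the origin (in the polytropic case $k=1/2$, $\tilde\beta(s)=s^{1/2}$): pushing Helffer--Sj\"ostrand calculus through this non-smoothness will require approximating $\tilde\beta$ by smooth functions in a way that interacts controllably with the spectra of $\hat E_{\hbar_n}$, and a related subtlety is to certify that the Berezin--Lieb errors in Steps 1--2 vanish symmetrically as $\hbar\to 0$ (not merely giving one-sided inequalities), which will require quantitative estimates built from the structural assumptions \textbf{(A2)}--\textbf{(A4)}.
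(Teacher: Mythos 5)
Your overall two-sided matching is close in spirit to the paper's, but your quantization/dequantization machinery is genuinely different from the paper's, and this difference has real consequences at several points.

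The paper does not use the Berezin--Lieb inequalities or the Husimi transform. Instead, it exploits the Euler--Lagrange structure: it constructs the auxiliary quantum state $\gamma_\hbar=\tilde\beta\big(-\mu-(-\tfrac{\hbar^2}{2}\Delta-U)\big)$ (from the classical minimizer $\mathcal{Q}$) and the auxiliary classical distribution $f_\hbar=\tilde\beta\big(-\mu_\hbar-(\tfrac{|p|^2}{2}-U_\hbar(q))\big)$ (from the quantum minimizer $\mathcal{Q}_\hbar$), and compares them via Weyl's law (Theorem~\ref{Weyl's law} and Proposition~\ref{Weyl's law for profile}) for the mass, Casimir and kinetic terms, plus Proposition~\ref{Toplitz quant convergence} for the interaction. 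Your Berezin--Lieb route for parts (i)--(ii) is an attractive alternative because it avoids Weyl's law entirely and automatically has the inequalities pointing the right way on each side; and since the Husimi/T\"oplitz pair preserve the mass constraint exactly ($\mathcal{M}(\tilde f_\hbar)=\mathcal{M}^\hbar(\mathcal{Q}_\hbar)$ and $\mathcal{M}^\hbar(\textup{Op}^T_\hbar[\mathcal{Q}])=\mathcal{M}(\mathcal{Q})$), the $1+O(\hbar)$ rescaling you propose is unnecessary. This is a legitimately different proof of (i)--(ii). What it buys: no long-range Weyl's law, no profile-decomposed Weyl's law. What it loses: the paper's auxiliary $f_\hbar$ carries the Lagrange multiplier $\mu_\hbar$ explicitly, which your Husimi transform does not.

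That loss is the main gap in your part (iii). You say the potential convergence ``follows from the Poisson equation'' plus strong density convergence, but the claimed conclusion $\mu_{\hbar_n}\to\mu$ does not follow from that. The paper needs three separate inputs that your plan omits: a uniform-in-$\hbar$ operator-norm bound $\sup_\hbar\|\mathcal{Q}_\hbar\|<\infty$ and a uniform $L^\infty\cap C^{0,1/5}$ bound for $U_\hbar$, obtained by an iteration on the Euler--Lagrange equation together with a De Giorgi--Nash--Moser estimate (Lemma~\ref{improved uniform boundedness}); two-sided bounds on $\mu_\hbar$, the positive lower bound coming from a Pohozaev identity together with \textbf{(A4)} (Lemma~\ref{Pohozaev}, Corollary~\ref{Lagrange multiplier bound}); and finally the identification of the limiting multiplier, which the paper does by passing to the weak limit in the explicit formula $f_\hbar=\tilde\beta\big(-\mu_\hbar-(\tfrac{|p|^2}{2}-U_\hbar)\big)$ and matching it with $\mathcal{Q}=\tilde\beta\big(-\mu-(\tfrac{|p|^2}{2}-U)\big)$. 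The Husimi transform of $\mathcal{Q}_\hbar$ is not of this form, so you would have to reintroduce the auxiliary classical state $f_\hbar$ (or some equivalent) to carry the multiplier through. The uniform H\"older bound on $U_\hbar$ is also exactly what makes Proposition~\ref{Toplitz quant convergence} applicable, so it cannot be skipped.

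For part (iv), the obstruction you flag is genuine, but the paper circumvents it in a way you did not identify: instead of Helffer--Sj\"ostrand (which would need $\tilde\beta\in C^2$ and fails for $\tilde\beta(s)=s_+^{1/2}$), Proposition~\ref{Toplitz quant convergence} approximates $\tilde\beta$ (after truncation above the uniform $\sup\|V_\hbar\|_{L^\infty}$) by polynomials in $(t\pm i)^{-1}$ via Stone--Weierstrass, and then reduces to a resolvent comparison (Lemma~\ref{key lemma for density convergence}). That argument needs only continuity of $\tilde\beta$ and the uniform H\"older bound on $V_\hbar$. So the low-regularity problem you anticipate is real for your proposed method and is precisely what the paper's continuous-functional-calculus argument is designed to sidestep. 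If you replace the Helffer--Sj\"ostrand step by the Stone--Weierstrass resolvent approximation, and supply the missing uniform bounds and Lagrange-multiplier analysis in (iii), your outline becomes a complete alternative proof.
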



The three subjects mentioned in Section \ref{sec: background}, namely, stable steady states for the classical stellar dynamics model \eqref{gVP}, those for the quantum model \eqref{gSP}, and semi-classical limit of general states between two models, have been investigated separately by many authors. However, in spite of its physical importance, to the best of authors' knowledge, our main theorem is the first result establishing convergence from quantum to classical steady states in semi-classical limit, which lies in the intersection of the three important subjects.

Due to the conjecture on characterization of stable states for the classical model, an important issue would be optimality of the conditions on $\beta$. However, we do not claim optimality of the assumptions in Theorem \ref{thm1}. Indeed, as in Guo and Rein \cite{GuoRein2}, a different variational formulation, that is, the energy minimization under a mass-Casimir constraint, would be helpful to extend the $\beta$-class. In addition, it could be improved further by the new approach of Lemou, M\'{e}hats and Rapha\"el \cite{LMR4}. However, these approaches have not yet been employed to the quantum model. The currently known result \cite{ADS} still uses the energy-Casimir method, and it forces us to adhere to the free energy minimization problems. On the other hand, we also emphasize that  \textup{\textbf{(A1)}}-\textup{\textbf{(A4)}} in Theorem \ref{thm1} are nothing but the assumptions in Theorem \ref{classical theorem} and \ref{result-ADS}. Therefore, these would be the best possible as long as existence of free energy minimizers is known.

Proving convergence of quantum free energy minimizers is of interests by itself, but we expect that our main theorem will also be helpful to explore their qualitative properties. Indeed, it has been observed in many models that by convergence, uniqueness, non-degeneracy and symmetries of minimizers for a limit model can be transferred to a ``pre-limit" model as long as the given parameter is small or large enough (see \cite{Lenzmann, CHS} for example). We wish to discover  similar phenomena for quantum minimizers, but it is postponed to our forthcoming work.

\subsection{Outline of the proof}

We sketch the main idea and the strategy of the proof of our main theorem. This paper is mostly devoted to the proof of Theorem \ref{thm1} $(ii)$, that is, convergence from the minimum quantum free energy $\mathcal{J}^\hbar_{\min}$ to the minimum classical free energy $\mathcal{J}_{\min}$ as $\hbar\to0$. The rest of the theorem then follows from the concentration-compactness principle in \cite{GuoRein1, Rein} and the properties of the {Husimi} transform and the T\"oplitz quantization.

In order to compare the quantum and the classical minimum free energies, we introduce a family of auxiliary quantum states 
$$\gamma_\hbar:=\tilde{\beta}\big(-\mu-(-\tfrac{\hbar^2}{2}\Delta-U)\big),$$
where $\mathcal{Q} = \tilde{\beta}(-\mu-(\tfrac{|p|^2}{2}-U(q)))$ is a classical free energy minimizer such that $\mathcal{J}_{\min}=\mathcal{J}(\mathcal{Q})$ with $\mathcal{M}(\mathcal{Q})=M$, and a family of auxiliary classical distributions
$$f_\hbar(q,p):=\tilde{\beta}\big(-\mu_{\hbar}-(\tfrac{|p|^2}{2}-U_\hbar(q))\big),$$
where $\mathcal{Q}_\hbar=\tilde{\beta}(-\mu_{\hbar}-(-\tfrac{\hbar^2}{2}\Delta-U_\hbar))$ is a quantum free energy minimizer such that $\mathcal{J}_{\min}^\hbar=\mathcal{J}^\hbar(\mathcal{Q}_\hbar)$ with $\mathcal{M}^\hbar(\mathcal{Q}_\hbar)=M$. Then, we aim to show that 
\begin{equation}\label{key convergence 1}
\lim_{\hbar\to 0}\mathcal{J}^\hbar(\gamma_\hbar)=\mathcal{J}_{\min}\quad\textup{and}\quad \lim_{\hbar\to 0}\mathcal{M}^\hbar(\gamma_\hbar)=M
\end{equation}
and
\begin{equation}\label{key convergence 2}
\lim_{n\to\infty}\left\{\mathcal{J}(f_{\hbar_n})-\mathcal{J}_{\min}^{\hbar_n}\right\}=0\quad\textup{and}\quad \lim_{n\to\infty}\mathcal{M}(f_{\hbar_n})=M
\end{equation}
for any $\{\hbar_n\}_{n=1}^\infty$ with $\hbar_n\searrow 0$. Indeed, since $\mathcal{J}_{\min} < 0$, Theorem \ref{thm1} $(i)$ follows from \eqref{key convergence 1}. Moreover, by definition of the minimum free energies, \eqref{key convergence 1} and \eqref{key convergence 2} immediately imply Theorem \ref{thm1} $(ii)$.

For the proof of \eqref{key convergence 1} and \eqref{key convergence 2}, we employ the two main tools: the T\"oplitz quantization
 and Weyl's law. The T\"oplitz quantization $\text{Op}^T_\hbar[\cdot]$ is a way to convert classical distributions to quantum observables (see Appendix A). There are several different kinds of quantizations, such as the classical quantization and the Weyl quantization. One of the advantage of using the T\"oplitz quantization is that it preserves both positivity and self-adjointness. Our key proposition (Proposition \ref{Toplitz quant convergence}) measures closeness between classical distributions and quantum states
\begin{equation}\label{Toplitz key convergence}
\lim_{\hbar\to0}\Big\|\text{Op}^T_\hbar\Big[\alpha\big(-\mu-(\tfrac{|p|^2}{2}+V(q))\big)\Big] - \alpha\big(-\mu-(-\tfrac{\hbar^2}{2}\Delta+V)\big)\Big\| = 0
\end{equation}
for suitable real-valued function $\alpha: \mathbb{R}\to\mathbb{R}$. Combining with the Lieb-Thirring inequality (Theorem \ref{LT inequality}), it is used to prove convergence from the quantum to the classical potential energies. 

For convergence of the kinetic energy, the mass and the Casimir functionals, we employ Weyl's law (Theorem \ref{Weyl's law} and Proposition \ref{Weyl's law for profile})
$$\lim_{\hbar\to0}(2\pi\hbar)^3N(-E,\hbar, V)=\Big|\Big\{(q,p)\in\mathbb{R}^6:\ \tfrac{|p|^2}{2}+V(q)\leq -E\Big\}\Big|,\quad E>0,$$
where $N(-E,\hbar, V)$ denotes the number of eigenvalues (counting multiplicity) of the Schr\"odinger operator $-\tfrac{\hbar^2}{2}\Delta+V$ less than $-E<0$. Weyl's law provides a precise relation between the number of eigenvalues and the measure in the phase space below a certain energy level. By the layer cake representation, it is used to prove the convergence
\begin{equation}\label{Weyl's law key convergence}
\lim_{\hbar\to0}\textup{Tr}^\hbar\Big(\alpha\big(-\mu-(-\tfrac{\hbar^2}{2}\Delta-U)\big)\Big)=\int_{\mathbb{R}^6}\alpha\big(-\mu-(\tfrac{|p|^2}{2}-U(q))\big)dqdp
\end{equation}
for suitable $\alpha:\mathbb{R}\to\mathbb{R}$. It should be noted that there are a few other ways to prove a similar convergence for the Weyl and the T\"oplitz quantizations exploiting the properties of the transforms (see \cite{AKN1, AKN2} for instance) but it requires some regularity of the function $\alpha$. However, in our setting, both quantum and classical functionals are given in terms of Hamiltonians $-\tfrac{\hbar^2}{2}\Delta-U$ and $\tfrac{|p|^2}{2}-U(q)$, thus we may use Weyl's law. One of benefits of using Weyl's law is that regularity of symbols is not required so that a large class of $\beta$ is included. We also would like to emphasize that Weyl's law, employed in this article, is extended in two ways. First, Weyl's law is typically stated for short-range potentials i.e., $V\in L^{3/2}(\R^3)$ \cite[Theorem XIII.80]{ReedSimon4}, so it cannot be directly applied to the potential functions $U$ and $U_\hbar$. Here, we extend it for long-range potentials $V \in L^r(\R^3)$ with $r \geq 3/2$ (Theorem \ref{Weyl's law}). This is possible, because of the gap from the Lagrange multiplier $-\mu$. Secondly, for the proof of \eqref{key convergence 2}, we need to put $U_\hbar$ in \eqref{Weyl's law key convergence}. To overcome the subtle issue that the potential $U_\hbar$ simultaneously varies in the limit $\hbar\to 0$, we extend Weyl's law for profile decompositions (Proposition \ref{Weyl's law for profile}), and apply it to the profile decomposition of $U_\hbar$.

\subsection{Organization of the rest of the paper}
In Section 2, we define suitable operator spaces for minimizers of \eqref{quantum VP}, and introduce two fundamental inequalities for our analysis, the Lieb-Thirring inequality and the interpolation estimate for distribution functions, which are semi-classically consistent. Section 3 is devoted to the first key tool \eqref{Toplitz key convergence} mentioned in Section 1.3. In Section 4, we prove Weyl's law for long range potentials. Having the two main analytic tools, we prove \eqref{key convergence 1} in Section 5 and \eqref{key convergence 2} in Section 6 and 7. Finally, in Section 8, we complete the proof of Theorem \ref{thm1}. The remaining sections consist of two appendices. In Appendix A, we arrange the properties of the {Husimi} transform and T\"oplitz quantization.
In Appendix B, we state the main result of Aki, Dolbeault and Sparber \cite{ADS} on thermal effects of quantum model, and give some comments.

\subsection{Acknowledgement}
This research of the first author was supported by Basic Science Research Program through the National Research Foundation of Korea (NRF) funded by the Ministry of Education (NRF- 2017R1C1B5076348). This research of the second author was supported by Basic Science Research Program through the National Research Foundation of Korea(NRF) funded by the Ministry of Science and ICT (NRF-2020R1A2C4002615). This research of the third author was supported by Basic Science Research Program through the National Research Foundation of Korea(NRF) funded by the Ministry of Science and ICT (NRF-2020R1C1C1A01006415).
The authors would like to thank Dr. Laurent Lafleche for pointing out an error in using the Wigner transform.

\section{Preliminaries}

We introduce the operator spaces and provide two fundamental inequalities, namely the Lieb-Thirring inequality (Theorem \ref{LT inequality}) and the interpolation estimates (Theorem \ref{interpolation inequality}). They will be crucially used to formulate the quantum and the classical variational problems \eqref{classical VP} and \eqref{quantum VP} in a semi-classically consistent way. In Remark \ref{LT remark}, we emphasize the substantial role of the Lieb-Thirring inequality in our analysis.

\subsection{Operator spaces}\label{sec: operator spaces}
For $1\leq \alpha\leq \infty$, the \emph{Schatten class} $\mathfrak{S}^\alpha$ is defined by the Banach space of compact self-adjoint\footnote{In fact, the Schatten class is defined as a space of compact operators without restricting to self-adjoint operators. However, with abuse of notations, we call its self-adjoint subspace the Schatten class, because only self-adjoint operators are treated in this article.} operators on $L^2=L^2(\mathbb{R}^3;\mathbb{C})$ with the norm
$$\|\gamma\|_{\mathfrak{S}^\alpha}:=\left\{\begin{aligned}
&\big(\textup{Tr} |\gamma|^\alpha\big)^{\frac{1}{\alpha}}&&\textup{if }1\leq \alpha<\infty,\\
&\|\gamma\|   &&\textup{if }\alpha=\infty,
\end{aligned}\right.$$
where we mean by $\|\gamma\|$ the operator norm of $\gamma$. 
In particular, $ \mathfrak{S}^1$ is the trace-class, $\mathfrak{S}^2 $ is the Hilbert-Schmidt class, and $\mathfrak{S}^\infty$ is the set of all compact self-adjoint operators. See the monograph \cite{Simon} for an exhaustive reference. Next, we define the \textit{(homogeneous) trace-class Sobolev space} $\dot{\mathfrak{H}}^1:=\{\gamma: \sqrt{-\Delta}\gamma\sqrt{-\Delta}\in\mathfrak{S}^1\}$ equipped with the norm 
$$\|\gamma\|_{\dot{\mathfrak{H}}^1}:=\big\|\sqrt{-\Delta}\gamma\sqrt{-\Delta}\big\|_{\mathfrak{S}^1}.$$
For notational convenience, if there is no confusion, we denote
\begin{equation}\label{kinetic energy notation}
\textup{Tr}\big((-\Delta)\gamma\big):=\textup{Tr}\big(\sqrt{-\Delta}\gamma\sqrt{-\Delta}\big).
\end{equation}
Indeed, by cyclicity of trace, \eqref{kinetic energy notation} holds for smooth finite-rank operators.  

For alternative interpretation of the operator spaces, we recall that any compact self-adjoint operator has an eigenfunction expansion of the form
$$\gamma=\sum_{j=1}^\infty\lambda_j\ket{\phi_j}\bra{\phi_j},$$
where $\{\phi_j\}_{j=1}^\infty$ is the set of mutually $L^2$ orthonormal eigenfunctions of the given operator and $\lambda_j$'s are the corresponding eigenvalues with $\lambda_j\to 0$. Here, with physicists' bra-ket notation, $\ket{\phi}\bra{\phi}$ denotes a one-particle projector, i.e.,
$$\psi(x)\mapsto\Big(\ket{\phi}\bra{\phi}\ket{\psi}\Big)(x) =  \phi(x)\int_{\mathbb{R}^3} \overline{\phi(x')}\psi(x') dx'.$$
By the eigenfunction expansion, the Schatten $\alpha$-norm is simply the $\ell^\alpha$-norm of eigenvalues,
$$\|\gamma\|_{\mathfrak{S}^\alpha}=\left\|\{\lambda_j\}_{j=1}^\infty\right\|_{\ell^\alpha}$$
and the trace-class Sobolev norm is written by
\[
\|\gamma\|_{\dot{\mathfrak{H}}^1} = \sum_{j=1}^\infty\lambda_j\|\nabla \phi_j\|_{L^2}^2.
\]
The operator $\gamma$ is frequently identified with its kernel $\gamma(x,x')$ represented as
\[
\gamma(x,x') = \sum_{j=1}^\infty \lambda_j\phi_j(x)\overline{\phi_j(x')}.
\]
Then, the density function $\rho_\gamma$ is given by
\[
\rho_\gamma(x) = \gamma(x,x) = \sum_{j=1}^\infty \lambda_j|\phi_j(x)|^2.
\]

\subsection{Two fundamental inequalities}

We recall the following variant of the Lieb-Thirring inequality, which we still call the \textit{Lieb-Thirring inequality} in this article.

\begin{theorem}[Lieb-Thirring inequality]\label{LT inequality}
Let $1\leq\alpha\leq\infty$. If $\gamma\in\dot{\mathfrak{H}}^1\cap\mathfrak{S}^\alpha$ such that $\gamma \geq 0$, then 
\begin{equation}\label{general operator interpolation inequality}
\|\rho_\gamma\|_{L^{\frac{5\alpha-3}{3\alpha-1}}(\mathbb{R}^3)}\lesssim \|\gamma\|_{\mathfrak{S}^\alpha}^{\frac{2\alpha}{5\alpha-3}}\|\sqrt{-\Delta}\gamma\sqrt{-\Delta}\|_{\mathfrak{S}^1}^{\frac{3(\alpha-1)}{5\alpha-3}},
\end{equation}
where $\rho_{\gamma}(x)=\gamma(x,x)$ is the density function. As a consequence, if $1\leq\alpha<\infty$ and $\gamma$ is non-negative, then 
\begin{equation}\label{semi-classical LT}
\|\rho_\gamma^\hbar\|_{L^{\frac{5\alpha-3}{3\alpha-1}}(\mathbb{R}^3)}\lesssim \left\{\textup{Tr}^\hbar\big(\gamma^\alpha\big)\right\}^{\frac{2}{5\alpha-3}} \left\{\textup{Tr}^\hbar\big((-\hbar^2\Delta)\gamma\big)\right\}^{\frac{3(\alpha-1)}{5\alpha-3}}
\end{equation}
for $\hbar\in(0,1]$, where the implicit constants are independent of $\hbar$. Moreover, we have
\begin{equation}\label{endpoint semi-classical LT}
\|\rho_\gamma^\hbar\|_{L^{\frac{5}{3}}(\mathbb{R}^3)}\lesssim \|\gamma\|^{\frac{2}{5}} \left\{\textup{Tr}^\hbar\big((-\hbar^2\Delta)\gamma\big)\right\}^{\frac{3}{5}}.
\end{equation}
\end{theorem}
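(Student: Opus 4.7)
The plan is to obtain \eqref{general operator interpolation inequality} by interpolating between the two endpoints $\alpha=1$ and $\alpha=\infty$, and then to derive the semi-classical bounds \eqref{semi-classical LT} and \eqref{endpoint semi-classical LT} from it by a scaling argument in $\hbar$.

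At $\alpha=1$ the inequality \eqref{general operator interpolation inequality} reduces to the identity $\|\rho_\gamma\|_{L^1}=\operatorname{Tr}(\gamma)=\|\gamma\|_{\mathfrak{S}^1}$, which is immediate from $\gamma\geq 0$. At $\alpha=\infty$ it reads $\|\rho_\gamma\|_{L^{5/3}}\lesssim \|\gamma\|_{\mathfrak{S}^\infty}^{2/5}\|\sqrt{-\Delta}\gamma\sqrt{-\Delta}\|_{\mathfrak{S}^1}^{3/5}$, which I would obtain from the classical Lieb--Thirring kinetic energy inequality for operators with $0\leq\gamma\leq I$ after rescaling by $\|\gamma\|_{\mathfrak{S}^\infty}$. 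For intermediate $1<\alpha<\infty$ I would interpolate the two endpoint $L^p$ bounds on $\rho_\gamma$ via H\"older with $\theta=2/(5\alpha-3)$ chosen so that $1/p=\theta+3(1-\theta)/5$, and then collapse the resulting product $\|\gamma\|_{\mathfrak{S}^1}^{\theta}\|\gamma\|_{\mathfrak{S}^\infty}^{2(1-\theta)/5}$ into the single factor $\|\gamma\|_{\mathfrak{S}^\alpha}^{2\alpha/(5\alpha-3)}$. Since Schatten log-convexity only yields $\|\gamma\|_{\mathfrak{S}^\alpha}^{2\alpha/(5\alpha-3)}\leq \|\gamma\|_{\mathfrak{S}^1}^{\theta}\|\gamma\|_{\mathfrak{S}^\infty}^{2(1-\theta)/5}$ (the wrong direction for closing the bound), I expect to need a finer spectral-slicing argument: write $\gamma=\int_0^\infty \mathbb{1}_{\gamma>t}\,dt$ via the layer-cake representation, apply the $\alpha=\infty$ version of Lieb--Thirring to each spectral projection $\mathbb{1}_{\gamma>t}$, and use Minkowski together with H\"older in $t$ to reassemble the Schatten-$\alpha$ norm.

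The semi-classical bounds then follow by elementary bookkeeping. Applying \eqref{general operator interpolation inequality} to a given $\gamma$ and converting each factor using $\rho_\gamma^\hbar=(2\pi\hbar)^3\rho_\gamma$, $\operatorname{Tr}^\hbar=(2\pi\hbar)^3\operatorname{Tr}$, and $\operatorname{Tr}(-\Delta\gamma)=\hbar^{-2}(2\pi\hbar)^{-3}\operatorname{Tr}^\hbar(-\hbar^2\Delta\gamma)$, I would multiply both sides of \eqref{general operator interpolation inequality} by $(2\pi\hbar)^3$ and verify that the two contributions of powers of $\hbar$---namely $3-6/(5\alpha-3)-9(\alpha-1)/(5\alpha-3)=6(\alpha-1)/(5\alpha-3)$ from $(2\pi\hbar)^3$ and $-6(\alpha-1)/(5\alpha-3)$ from the $\hbar^{-2}$ in front of the kinetic factor---cancel exactly. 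This leaves an $\hbar$-independent constant and produces \eqref{semi-classical LT}; the endpoint \eqref{endpoint semi-classical LT} follows from the $\alpha=\infty$ case by the same scaling, with $\|\gamma\|_{\mathfrak{S}^\infty}$ identified as the operator norm $\|\gamma\|$.

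The main obstacle is obtaining the sharp $\mathfrak{S}^\alpha$ form of \eqref{general operator interpolation inequality}, as naive H\"older interpolation of the endpoints only gives a weaker mixed $\mathfrak{S}^1$--$\mathfrak{S}^\infty$ bound; for this I would likely appeal to the generalized operator Lieb--Thirring literature. Once \eqref{general operator interpolation inequality} is in hand, deriving the semi-classical inequalities \eqref{semi-classical LT} and \eqref{endpoint semi-classical LT} is just the scaling bookkeeping above.
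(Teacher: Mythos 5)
Your reductions at the endpoints and your $\hbar$-bookkeeping are correct: at $\alpha=1$ the inequality collapses to $\|\rho_\gamma\|_{L^1}=\operatorname{Tr}\gamma$, at $\alpha=\infty$ it is the standard kinetic Lieb--Thirring bound after normalising $\gamma$ by its operator norm, and your computation that the net power of $\hbar$ in passing from \eqref{general operator interpolation inequality} to \eqref{semi-classical LT} and \eqref{endpoint semi-classical LT} is zero is exactly right. You also correctly observe that naive $L^p$-interpolation of the two endpoint density bounds followed by Schatten log-convexity runs the \emph{wrong} way, so it cannot produce the sharp $\mathfrak{S}^\alpha$ dependence --- this is an honest and accurate diagnosis.

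Where your sketch differs from the paper, and where it has a genuine gap, is the non-endpoint case $1<\alpha<\infty$. The paper does not prove it either; it cites the appendix of Lions--Paul and signals that the proof there proceeds through the \emph{dual} Lieb--Thirring inequality (the paper's Lemma~\ref{LT dual}, i.e.\ $\sum_j|\lambda_j|^\mu\lesssim\int V_-^{\mu+3/2}$) via a duality/variational argument. Your proposed alternative --- decompose $\gamma=\int_0^\infty \mathbf{1}_{\gamma>t}\,dt$, apply the $\alpha=\infty$ inequality to each spectral projection, then reassemble with Minkowski and H\"older in $t$ --- does not close as stated. After Minkowski you have $\|\rho_\gamma\|_{L^p}\leq\int_0^\infty\|\rho_{\mathbf{1}_{\gamma>t}}\|_{L^p}\,dt$, and interpolating each slice between $L^1$ and $L^{5/3}$ gives $\|\rho_{\mathbf{1}_{\gamma>t}}\|_{L^p}\leq N(t)^\theta K(t)^{3(1-\theta)/5}$ with $\theta+3(1-\theta)/5=1/p<1$. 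Because the two exponents sum to less than one, a H\"older estimate in $t$ leaves a factor $\left(\int 1\,dt\right)^{1-1/p}$ (or the analogous divergent sum in a dyadic version), which is not controllable; the exponent deficit $1-1/p$ is exactly what prevents the pure layer-cake reassembly from working. Closing it requires an additional idea --- either the duality route through Lemma~\ref{LT dual} that the paper points to, or a two-parameter split $\gamma\mathbf{1}_{\gamma\leq s}+\gamma\mathbf{1}_{\gamma>s}$ giving a weak-type estimate that is then upgraded --- so "reassemble the Schatten-$\alpha$ norm by Minkowski together with H\"older in $t$" overstates what that step accomplishes. In short: your scaling derivations of \eqref{semi-classical LT} and \eqref{endpoint semi-classical LT} from \eqref{general operator interpolation inequality} are complete and match the paper's implicit reasoning, but your sketched proof of \eqref{general operator interpolation inequality} for intermediate $\alpha$ needs the duality mechanism (or an equivalent substitute) that the paper delegates to Lions--Paul.
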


When $\alpha=\infty$, \eqref{general operator interpolation inequality} is ``the" Lieb-Thirring inequality \cite{LiebThirring}. For the proof of the non-endpoint case $\alpha<\infty$, we refer to the appendix of P.-L. Lions and Paul \cite{LP}. The proof involves the dual formulation of the Lieb-Thirring inequality stated below. It will also be used later to obtain uniform bounds for quantum free energy minimizers (see Lemma \ref{improved uniform boundedness}).

\begin{lemma}[Dual formulation of the Lieb-Thirring inequality]\label{LT dual}
Suppose that $V\in L^{\frac{3}{2}+\alpha}(\mathbb{R}^3)$ for some $\alpha>0$. Let $\lambda_1\leq \lambda_2\leq\cdots <0$ be negative eigenvalues of the Schr\"odinger operator $-\Delta+V$. Then,
\begin{equation}\label{eq: LT dual}
\sum_j |\lambda_j|^{\alpha}\lesssim \int_{\mathbb{R}^3}(V_-)^{\frac{3}{2}+\alpha}dx,
\end{equation}
where $a_-=\max\{-a, 0\}$. As a consequence, if $\lambda_{\hbar;1}\leq \lambda_{\hbar;2}\leq\cdots <0$ are negative eigenvalues of the Schr\"odinger operator $-\tfrac{\hbar^2}{2}\Delta+V$ with $0<\hbar\leq1$, then 
\begin{equation}\label{eq: semi-classical LT dual}
(2\pi\hbar)^3\sum_j |\lambda_{\hbar;j}|^{\alpha}\lesssim \int_{\mathbb{R}^3} (V_-)^{\frac{3}{2}+\alpha}dx,
\end{equation}
where the implicit constant is independent of $\hbar$.
\end{lemma}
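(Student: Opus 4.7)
The plan is to derive both inequalities from the classical Lieb--Thirring bound \cite{LiebThirring}, which asserts precisely \eqref{eq: LT dual} in the case $V \leq 0$; the two reductions are a standard min--max argument for \eqref{eq: LT dual} itself and a semi-classical rescaling for \eqref{eq: semi-classical LT dual}.

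For \eqref{eq: LT dual}, write $V = V_+ - V_-$ with $V_\pm \geq 0$. Since $V_+ \geq 0$ as a multiplication operator is non-negative, the operator inequality
$$-\Delta + V \;\geq\; -\Delta - V_-$$
holds in the quadratic-form sense. By the min--max principle, if $\tilde{\lambda}_1 \leq \tilde{\lambda}_2 \leq \cdots$ are the negative eigenvalues of $-\Delta - V_-$, then $\lambda_j \geq \tilde{\lambda}_j$ for each $j$ (in particular every negative eigenvalue of $-\Delta + V$ is matched by a lower one of $-\Delta - V_-$), hence $|\lambda_j| \leq |\tilde{\lambda}_j|$. Summing over $j$ and applying the original Lieb--Thirring inequality to the non-positive potential $-V_-$ yields
$$\sum_j |\lambda_j|^\alpha \;\leq\; \sum_j |\tilde{\lambda}_j|^\alpha \;\lesssim\; \int_{\mathbb{R}^3} (V_-)^{\frac{3}{2}+\alpha}\, dx,$$
which is \eqref{eq: LT dual}. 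The hypothesis $V \in L^{\frac{3}{2}+\alpha}$ with $\alpha > 0$ ensures the right-hand side is finite and places us squarely inside the range where the Lieb--Thirring constant exists.

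For \eqref{eq: semi-classical LT dual}, use the rescaling $\phi \mapsto \phi$ (no spatial rescaling is needed since the parameter is $\hbar$ rather than a length). If $(-\frac{\hbar^2}{2}\Delta + V)\phi = \lambda_{\hbar;j}\phi$, then multiplying by $\frac{2}{\hbar^2}$ gives
$$\Bigl(-\Delta + \tfrac{2}{\hbar^2}V\Bigr)\phi \;=\; \tfrac{2}{\hbar^2}\lambda_{\hbar;j}\,\phi,$$
so $\frac{2}{\hbar^2}\lambda_{\hbar;j}$ are the negative eigenvalues of $-\Delta + \frac{2V}{\hbar^2}$. Applying \eqref{eq: LT dual} to this operator,
$$\sum_j \Bigl|\tfrac{2}{\hbar^2}\lambda_{\hbar;j}\Bigr|^\alpha \;\lesssim\; \int_{\mathbb{R}^3}\Bigl(\tfrac{2}{\hbar^2}V_-\Bigr)^{\frac{3}{2}+\alpha} dx,$$
which after isolating $\sum_j |\lambda_{\hbar;j}|^\alpha$ produces a factor of $\hbar^{-3}$ on the right. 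Multiplying both sides by $(2\pi\hbar)^3$ cancels this factor and yields \eqref{eq: semi-classical LT dual} with an implicit constant independent of $\hbar$.

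No step here is genuinely difficult; the content of the lemma lies in the classical Lieb--Thirring inequality, which is being invoked as a black box. The only mild subtlety is keeping track of the $\hbar$-dependence in the scaling so that the precise semi-classical weight $(2\pi\hbar)^3$ on the left matches the homogeneity $\hbar^{-3}$ appearing from $V \mapsto 2V/\hbar^2$; this is exactly what makes the bound semi-classically consistent and what allows it to be paired with Weyl's law and the Lieb--Thirring inequality \eqref{semi-classical LT} in later sections.
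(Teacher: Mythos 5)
Your proof is correct. The paper's own proof is shorter: it cites the Appendix of Lions--Paul for \eqref{eq: LT dual}, and for \eqref{eq: semi-classical LT dual} it rescales the \emph{spatial variable} ($\phi\mapsto\phi(\tfrac{\hbar}{\sqrt{2}}\cdot)$, so $V\mapsto V(\tfrac{\hbar}{\sqrt{2}}\cdot)$ and the eigenvalues stay fixed), picking up the factor $\hbar^{-3}$ from the Jacobian; you instead rescale the \emph{operator} (multiply by $2/\hbar^2$, so eigenvalues and potential both scale) and extract $\hbar^{-3}$ from the exponent mismatch $(3/2+\alpha)-\alpha=3/2$. These are the same scaling computation organized differently, and both correctly cancel against $(2\pi\hbar)^3$.

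Two small remarks. First, the min--max reduction to the non-positive potential $-V_-$ is a valid and clean way to present things, but it is arguably redundant: the Lieb--Thirring bound is usually already stated for general real $V$ with the right-hand side depending only on $V_-$, so one could simply invoke it directly (this is what the cited reference does). The operator-comparison argument is still worth keeping if you want the derivation to be self-contained from a version of Lieb--Thirring stated only for $V\le 0$, and your use of min--max there is correct — $\lambda_j\ge\tilde\lambda_j$ and, since $\lambda_j<0$ lies below the essential spectrum, $\tilde\lambda_j$ is guaranteed to exist as a genuine negative eigenvalue. Second, since $d=3$ here, $\alpha>0$ is indeed within the admissible Lieb--Thirring range ($\gamma\ge 0$ for $d\ge 3$), as you note.
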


\begin{proof}
For the proof of \eqref{eq: LT dual}, we refer to Lemma in Appendix of \cite{LP}. For \eqref{eq: semi-classical LT dual}, we note that $\lambda$ is an eigenvalue of $-\tfrac{\hbar^2}{2}\Delta+V$ if and only if it is an eigenvalue of $-\Delta+V(\hbar\cdot)$ because by simple scaling, $(-\tfrac{\hbar^2}{2}\Delta+V)\phi=\lambda\phi$ can be reformulated as $(-\Delta+V(\frac{\hbar}{\sqrt{2}}\cdot))(\phi(\frac{\hbar}{\sqrt{2}}\cdot))=\lambda\phi(\frac{\hbar}{\sqrt{2}}\cdot)$. Thus, \eqref{eq: LT dual} implies \eqref{eq: semi-classical LT dual}.
\end{proof}

We also recall the interpolation estimate for classical distribution functions. 
\begin{theorem}[Interpolation estimate]\label{interpolation inequality}
Let $1\leq\alpha\leq\infty$. If $f\in L^\alpha(\mathbb{R}^6)$ and $|p|^2f\in L^1(\mathbb{R}^6)$, then 
\begin{equation}\label{eq: interpolation inequality}
\|\rho_f\|_{L^{\frac{5\alpha-3}{3\alpha-1}}(\mathbb{R}^3)}\lesssim \|f\|_{L^\alpha(\mathbb{R}^6)}^{\frac{2\alpha}{5\alpha-3}} \||p|^2f\|_{L^1(\mathbb{R}^6)}^{\frac{3(\alpha-1)}{5\alpha-3}}.
\end{equation}
\end{theorem}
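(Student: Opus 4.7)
The plan is to apply the standard split-and-optimize technique pointwise in $q$, producing a sharp pointwise bound for $\rho_f(q)$, and then to integrate in $q$ via H\"older's inequality. The exponents that appear on the right-hand side are dictated by the two endpoint cases: at $\alpha=1$ the inequality degenerates to $\|\rho_f\|_{L^1}\leq\|f\|_{L^1}$ (Fubini), while at $\alpha=\infty$ it recovers the familiar companion estimate $\|\rho_f\|_{L^{5/3}}\lesssim\|f\|_{L^\infty}^{2/5}\||p|^2 f\|_{L^1}^{3/5}$ that matches the Lieb-Thirring exponents of Theorem \ref{LT inequality}.

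Step one: one may assume $f\geq 0$ (otherwise replace $f$ by $|f|$, which only enlarges the right-hand side). For each $q$ and each $R>0$, I would split
\[
\rho_f(q) = \int_{|p|\leq R} f(q,p)\,dp + \int_{|p|>R} f(q,p)\,dp.
\]
Bounding the first integral by H\"older in $p$ against the indicator of the ball of radius $R$ gives a factor $\lesssim R^{3(\alpha-1)/\alpha}\|f(q,\cdot)\|_{L^\alpha_p}$, while the second integral is controlled by inserting the trivial factor $|p|^2/R^2 \geq 1$, producing $R^{-2}\int_{\mathbb{R}^3}|p|^2 f(q,p)\,dp$. Balancing the two pieces over $R>0$ (the optimal $R$ satisfies $R^{(5\alpha-3)/\alpha} \sim \int|p|^2 f(q,\cdot)\,dp\,/\,\|f(q,\cdot)\|_{L^\alpha_p}$) yields the pointwise bound
\[
\rho_f(q) \lesssim \|f(q,\cdot)\|_{L^\alpha_p}^{\frac{2\alpha}{5\alpha-3}}\left(\int_{\mathbb{R}^3}|p|^2 f(q,p)\,dp\right)^{\frac{3(\alpha-1)}{5\alpha-3}}.
\]

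Step two: raise this pointwise bound to the power $r := (5\alpha-3)/(3\alpha-1)$ and integrate in $q$. The exponents appearing on the two factors are then $2\alpha/(3\alpha-1)$ and $3(\alpha-1)/(3\alpha-1)$. Applying H\"older's inequality in $q$ with conjugate exponents $(3\alpha-1)/2$ and $(3\alpha-1)/(3(\alpha-1))$ (these satisfy $2/(3\alpha-1) + 3(\alpha-1)/(3\alpha-1) = 1$ since $2+3(\alpha-1) = 3\alpha-1$) produces the factors $\|f\|_{L^\alpha(\mathbb{R}^6)}^{2\alpha/(3\alpha-1)}$ and $\||p|^2 f\|_{L^1(\mathbb{R}^6)}^{3(\alpha-1)/(3\alpha-1)}$ respectively. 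Taking the $r$-th root, and noting $2\alpha/[r(3\alpha-1)] = 2\alpha/(5\alpha-3)$ and $3(\alpha-1)/[r(3\alpha-1)] = 3(\alpha-1)/(5\alpha-3)$, delivers the claimed inequality.

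I do not expect any real obstacle: once the split-and-optimize template is in place, the remainder is pure exponent bookkeeping, and all the exponents above are strictly positive for $1<\alpha<\infty$. The only items worth checking separately are the degenerate cases $\alpha=1$ (no cutoff needed) and $\alpha=\infty$ (no H\"older in $p$), both of which are immediate and consistent with the formula.
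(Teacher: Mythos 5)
Your proof is correct, and every step checks out: the split at radius $R$, the H\"older bound $\int_{|p|\leq R}f(q,\cdot)\lesssim R^{3(\alpha-1)/\alpha}\|f(q,\cdot)\|_{L^\alpha_p}$ on the ball, the Chebyshev-type bound $R^{-2}\int|p|^2f(q,\cdot)$ on the tail, the optimization over $R$, and then H\"older in $q$ with exponents $\frac{3\alpha-1}{2}$ and $\frac{3\alpha-1}{3(\alpha-1)}$ — the arithmetic $\frac{2\alpha r}{5\alpha-3}=\frac{2\alpha}{3\alpha-1}$, $\frac{3(\alpha-1)r}{5\alpha-3}=\frac{3(\alpha-1)}{3\alpha-1}$ with $r=\frac{5\alpha-3}{3\alpha-1}$ all balances, and the endpoint cases $\alpha=1$ and $\alpha=\infty$ are handled cleanly as degenerations.

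Worth noting, though, is that the paper does not actually supply a proof of Theorem \ref{interpolation inequality}; it ``recalls'' the estimate as known. The only indication of a derivation is Remark \ref{LT remark}(ii), which suggests obtaining \eqref{eq: interpolation inequality} as the semi-classical limit of the operator-valued Lieb--Thirring inequality \eqref{semi-classical LT}: one inserts $\textup{Op}^T_\hbar[f]$ into \eqref{semi-classical LT}, uses the conservation properties of the T\"oplitz quantization (Proposition \ref{TQ-conserv}), and lets $\hbar\to0$. That route is conceptually tied to the paper's semi-classical theme but is far heavier — it rests on the full strength of Theorem \ref{LT inequality} and on the quantization/de-quantization machinery of Appendix A. Your split-and-optimize argument is the elementary, self-contained classical proof (essentially the scalar prototype of which Lieb--Thirring is the operator-valued generalization), and it proves the statement directly with nothing beyond H\"older and a one-parameter optimization. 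Both arrive at the same exponents; yours is the shorter, more transparent path, while the paper's remark is meant to illustrate the ``semi-classical consistency'' of the two inequalities rather than to serve as the most economical proof.
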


\begin{remark}\label{LT remark}
$(i)$ When $\gamma$ is a one-particle projector, i.e., $\gamma=\ket{\phi}\bra{\phi}$ with $\|\phi_j\|_{L^2}=1$, the Lieb-Thirring inequality \eqref{general operator interpolation inequality} deduces the Gagliardo-Nirenberg inequality
$$\|\phi\|_{L^{\frac{2(5\alpha-3)}{3\alpha-1}}(\mathbb{R}^3)}\lesssim \|\phi\|_{L^2(\mathbb{R}^3)}^{\frac{2\alpha}{5\alpha-3}}\|\nabla\phi\|_{L^2(\mathbb{R}^3)}^{\frac{3(\alpha-1)}{5\alpha-3}}=\|\nabla\phi\|_{L^2(\mathbb{R}^3)}^{\frac{3(\alpha-1)}{5\alpha-3}}.$$
When $\gamma=\sum_{j=1}^N\ket{\phi_j}\bra{\phi_j}$ and $\{\phi_j\}_{j=1}^N$ is an orthonormal set in $L^2(\mathbb{R}^3)$, the inequality \eqref{general operator interpolation inequality} implies that 
$$\Big\|\sum_{j=1}^N|\phi_j|^2\Big\|_{L^{\frac{5\alpha-3}{3\alpha-1}}(\mathbb{R}^3)}\lesssim N^{\frac{2}{5\alpha-3}}\bigg\{\sum_{j=1}^N\|\nabla\phi_j\|_{L^2(\mathbb{R}^3)}^2\bigg\}^{\frac{3(\alpha-1)}{5\alpha-3}}=O(N^{\frac{3\alpha-1}{5\alpha-3}}).$$
To get to the heart of matter, let us naively estimate the square sum using the triangle and the Gagliardo-Nirenberg inequalities without exploiting cancellation from orthogonality,
$$\Big\|\sum_{j=1}^N|\phi_j|^2\Big\|_{L^{\frac{5\alpha-3}{3\alpha-1}}(\mathbb{R}^3)}\leq \sum_{j=1}^N\|\phi_j\|_{L^{\frac{2(5\alpha-3)}{3\alpha-1}}(\mathbb{R}^3)}^2\lesssim \sum_{j=1}^N\|\nabla\phi_j\|_{L^2(\mathbb{R}^3)}^{\frac{6(\alpha-1)}{5\alpha-3}}=O(N).$$
Comparing the above two inequalities, one can see that the Lieb-Thirring inequality captures a summability gain from orthogonality. Similar gain of summability has been recently discovered for time evolution problems by Frank-Lewin-Lieb-Seiringer \cite{FLLS}, and it has been extended in \cite{FS, BHLNS}.\\
$(ii)$ The Lieb-Thirring inequality \eqref{general operator interpolation inequality} is semi-classically consistent in the sense that its reformulated inequalities \eqref{semi-classical LT} and \eqref{endpoint semi-classical LT} agree with the interpolation estimates \eqref{eq: interpolation inequality}. Indeed, by the properties of the T\"oplitz quantization (see Appendix \ref{appendix: Toplitz}), Theorem \ref{interpolation inequality} can be derived inserting $\textup{Op}_\hbar^T[f]$ into the inequality \eqref{semi-classical LT} and then taking $\hbar\to0$.\\
$(iii)$ The Lieb-Thirring inequality \eqref{general operator interpolation inequality} can be extended to
$$\|\rho_\gamma\|_{L^{\frac{5\alpha-3}{3\alpha-1}}(\mathbb{R}^3)}\lesssim \|\gamma\|_{\mathfrak{S}^{\tilde{\alpha}}}^{\frac{2\alpha}{5\alpha-3}}\|\sqrt{-\Delta}\gamma\sqrt{-\Delta}\|_{\mathfrak{S}^1}^{\frac{3(\alpha-1)}{5\alpha-3}}$$
for $1\leq\tilde{\alpha}\leq\alpha$, using the trivial embedding $\mathfrak{S}^{\tilde{\alpha}}\hookrightarrow\mathfrak{S}^\alpha$. However, if we put $\hbar$ as in \eqref{semi-classical LT}, the above inequality leads to
$$\|\rho_\gamma^\hbar\|_{L^{\frac{5\alpha-3}{3\alpha-1}}(\mathbb{R}^3)}\lesssim \hbar^{-\frac{6(\alpha-\tilde{\alpha})}{(5\alpha-3)\tilde{\alpha}}}\left\{\textup{Tr}^\hbar\big(\gamma^{\tilde{\alpha}}\big)\right\}^{\frac{2\alpha}{(5\alpha-3)\tilde{\alpha}}} \left\{\textup{Tr}^\hbar\big((-\hbar^2\Delta)\gamma\big)\right\}^{\frac{3(\alpha-1)}{5\alpha-3}}$$
for non-negative operators. Note that if $\tilde{\alpha}<\alpha$, the implicit constant blows up as $\hbar\to 0$. Thus, only the inequality \eqref{general operator interpolation inequality}, i.e., the case $\tilde{\alpha}=\alpha$, can be used in semi-classical analysis.\\
$(iv)$ In the work of Aki, Dolbeault and Sparber \cite{ADS} (see Theorem \ref{result-ADS}), the Sobolev inequality
\begin{equation}\label{Sobolev}
\|\rho_\gamma\|_{L^3(\mathbb{R}^3)}\lesssim \big\|\sqrt{-\Delta}\gamma\sqrt{-\Delta}\big\|_{\mathfrak{S}^1}
\end{equation}
(with no gain of summability) is employed to construct a minimizer for the variational problem \eqref{quantum VP}, and it allows to include a larger class of $\beta$ 
for fixed $\hbar=1$. However, the inequality \eqref{Sobolev} is not semi-classically consistent. Indeed, the corresponding inequality for distribution functions, i.e., $\|\rho_f\|_{L^3(\mathbb{R}^3)}\lesssim \||v|^2f\|_{L^1(\mathbb{R}^6)}$, does not hold. Therefore, the $\beta$ class in Theorem \ref{result-ADS} needs to be reduced in our analysis.
\end{remark}

\section{T\"oplitz quantization of functions of a Hamiltonian}

In this section, we consider the \textit{T\"oplitz quantization} (or the \textit{Wick quantization}) of functions of a classical Hamiltonian. By the T\"oplitz quantization, we mean the map from classical distributions to quantum states, given by 
$$\textup{Op}^T_\hbar[f] = \frac{1}{(2\pi\hbar)^3}\int_{\R^6} |\varphi^\hbar_{(q,p)}\rangle\langle \varphi^\hbar_{(q,p)}|f(q,p) dqdp,$$
where
$$\varphi^\hbar_{(q,p)}(x) = \frac{1}{(\pi\hbar)^{3/4}}e^{-\frac{|x-q|^2}{2\hbar}}e^{\frac{ip\cdot x}{\hbar}}$$
is a coherent state. See Appendix \ref{appendix: Toplitz} for the basic properties of the T\"oplitz quantization.  

The following proposition asserts that the T\"oplitz quantization of a function of the classical Hamiltonian can be approximated by the corresponding quantum state, and vice versa. Later, this proposition will be used in several places to compare quantum and classical potential energies.

\begin{proposition}[T\"oplitz quantization for a classical Hamiltonian]\label{Toplitz quant convergence}
Let $\alpha:\mathbb{R}\to\mathbb{R}$ be a non-negative continuous function such that $\alpha(t)=0$ for $t\leq 0$. Suppose that $\mu_\hbar\to\mu>0$ as $\hbar\to 0$, and that a family of real-valued functions $V_\hbar$ satisfies
$$\sup_{\hbar\in(0,1]}\|V_{\hbar}\|_{C^{0,\frac{1}{5}}(\mathbb{R}^3)}<\infty.$$
Then, we have
\begin{equation}\label{density correspondence proof claim}
\lim_{\hbar\to0}\left\|\textup{Op}^T_\hbar\Big[\alpha\big(-\mu_\hbar-(\tfrac{|p|^2}{2}+V_{\hbar}(q))\big)\Big]
-\alpha\big(-\mu_\hbar-(-\tfrac{\hbar^2}{2}\Delta+V_{\hbar})\big)\right\| =0.
\end{equation}
\end{proposition}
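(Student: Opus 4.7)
The plan is a double-approximation strategy: smooth the profile function $\alpha$ and mollify the Hölder-continuous potential $V_\hbar$, so that the claim is reduced to a semiclassical functional-calculus comparison on smooth data. The Hölder exponent $1/5$ in the hypothesis will quantify the mollification error uniformly in $\hbar$.

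\textbf{Reductions to smooth data.} The hypothesis yields a uniform bound $M:=\sup_{\hbar\in(0,1]}\|V_\hbar\|_{L^\infty}$, so the spectrum of $-\mu_\hbar-(-\tfrac{\hbar^2}{2}\Delta+V_\hbar)$ lies in $(-\infty,M]$ and the classical symbol $-\mu_\hbar-\tfrac{|p|^2}{2}-V_\hbar(q)$ is bounded above by the same $M$. Since $\alpha$ vanishes on $(-\infty,0]$, only the values of $\alpha$ on the compact set $[0,M]$ are relevant for either operator; I may therefore modify $\alpha$ outside a slightly larger compact set to obtain a continuous, compactly supported function without changing either side, and then mollify to $\alpha^\eta\in C_c^\infty(\mathbb{R})$ with $\|\alpha-\alpha^\eta\|_{L^\infty}\le\eta$. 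The elementary bounds $\|\textup{Op}^T_\hbar[f]\|_{\mathcal{B}(L^2)}\le\|f\|_{L^\infty}$ and $\|\alpha(A)-\alpha^\eta(A)\|\le\|\alpha-\alpha^\eta\|_{L^\infty(\sigma(A))}$ control the resulting errors uniformly in $\hbar$. Next, set $V_\hbar^\delta:=V_\hbar\ast\rho_\delta$ for a standard mollifier, so that $\|V_\hbar^\delta-V_\hbar\|_{L^\infty}\le C\delta^{1/5}$ uniformly in $\hbar$ by the Hölder hypothesis, with $V_\hbar^\delta$ now smooth. For $\alpha^\eta\in C_c^\infty$ the Helffer–Sjöstrand formula gives the Lipschitz-type bound $\|\alpha^\eta(A)-\alpha^\eta(B)\|\le C_{\alpha^\eta}\|A-B\|$ for self-adjoint $A,B$, so the quantum-side perturbation is $O_{\alpha^\eta}(\delta^{1/5})$; the classical-side perturbation is estimated analogously via $\textup{Lip}(\alpha^\eta)\cdot\delta^{1/5}$ together with the $L^\infty$-bound on the Töplitz operator norm.

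\textbf{The semiclassical step.} After these reductions it suffices to prove, for smooth $V_\hbar^\delta$ and $\alpha^\eta\in C_c^\infty(\mathbb{R})$,
\[
\bigl\|\textup{Op}^T_\hbar\bigl[\alpha^\eta(-\mu_\hbar-H^{cl,\delta})\bigr]-\alpha^\eta\bigl(-\mu_\hbar-H^{q,\delta}_\hbar\bigr)\bigr\|_{\mathcal{B}(L^2)}\longrightarrow 0\quad\textup{as }\hbar\to 0,
\]
where $H^{cl,\delta}(q,p)=\tfrac{|p|^2}{2}+V_\hbar^\delta(q)$ and $H^{q,\delta}_\hbar=-\tfrac{\hbar^2}{2}\Delta+V_\hbar^\delta$. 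I would combine two standard semiclassical ingredients. \emph{(i)} The Weyl–Töplitz comparison $\textup{Op}^T_\hbar[g]=\textup{Op}^W_\hbar[g\ast G_\hbar]$, where $G_\hbar$ is a Gaussian of variance $\hbar/2$; for smooth $g$ this gives $\|g-g\ast G_\hbar\|_{L^\infty}\lesssim\hbar\|\nabla^2 g\|_{L^\infty}$ and hence $\|\textup{Op}^T_\hbar[g]-\textup{Op}^W_\hbar[g]\|_{\mathcal{B}(L^2)}=O(\hbar)$ via Calderón–Vaillancourt. \emph{(ii)} The semiclassical functional calculus
\[
\alpha^\eta(\textup{Op}^W_\hbar[h])=\textup{Op}^W_\hbar[\alpha^\eta\circ h]+O(\hbar),
\]
established through the Helffer–Sjöstrand representation of $\alpha^\eta(A)$ combined with symbol-class bounds on the resolvent $(z-\textup{Op}^W_\hbar[h])^{-1}$. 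Applying \emph{(ii)} to $h=-\mu_\hbar-H^{cl,\delta}$ (noting $H^{q,\delta}_\hbar=\textup{Op}^W_\hbar[H^{cl,\delta}]$) and then \emph{(i)} to $g=\alpha^\eta\circ(-\mu_\hbar-H^{cl,\delta})$ yields the displayed convergence at fixed $\delta$. A diagonal argument, letting $\hbar\to 0$ first and then $\delta,\eta\to 0$, closes the proof.

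\textbf{Main obstacle.} The substantive difficulty is the semiclassical functional calculus \emph{(ii)}: since quantization does not commute with functional calculus, one must pass through resolvent estimates controlled by an almost-analytic extension of $\alpha^\eta$, and these estimates invoke finitely many $C^k$ bounds on the symbol $h$, hence on $V_\hbar^\delta$. Because $\|V_\hbar^\delta\|_{C^k}$ blows up as $\delta\to 0$, the limits $\hbar\to 0$ and $\delta\to 0$ must be taken in a compatible order; the Hölder exponent $1/5$ is what provides just enough smoothness margin in the mollification step to make this balance work.
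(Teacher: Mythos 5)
Your proposal is correct, but it takes a genuinely different route from the paper's proof. The paper first truncates $\alpha$ to a compactly supported continuous function (using the uniform upper bound on both $-\mu_\hbar-H_\hbar$ and $-\mu_\hbar-\hat{H}_\hbar$ coming from $\sup_\hbar\|V_\hbar\|_{L^\infty}$), then approximates that function in $C_0(\mathbb{R})$ by a polynomial in $(t+i)^{-1}$ and $(t-i)^{-1}$ via Stone--Weierstrass. The error then splits into three pieces: two are controlled uniformly in $\hbar$ by the supremum norm (via $\|\textup{Op}^T_\hbar[f]\|\le\|f\|_{L^\infty}$ and the functional-calculus bound), one is controlled by the fact that the T\"oplitz quantization asymptotically preserves products (Proposition \ref{TQ-est2}), and the last reduces to a direct resolvent-convergence estimate (Lemma \ref{key lemma for density convergence}) proved by an explicit bilinear-form computation with coherent states, Fourier transform and Parseval. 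This is exactly where the paper uses the H\"older regularity of $V_\hbar$, obtaining the quantitative rate $\hbar^{1/10}(1+\|V_\hbar\|_{C^{0,1/5}})$. Your approach instead mollifies both $\alpha$ and $V_\hbar$, passes to the Weyl quantization through $\textup{Op}^T_\hbar[f]=\textup{Op}^W_\hbar[G^6_{\hbar/2}*f]$, and invokes the semiclassical functional calculus via the Helffer--Sj\"ostrand formula and a resolvent parametrix. Both strategies share the initial compactness reduction in $\alpha$, and both must negotiate a loss-of-derivatives (the paper absorbs it inside one coherent-state estimate; you handle it by taking $\hbar\to0$ before $\delta,\eta\to0$). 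The paper's route buys self-containedness --- it never needs Calder\'on--Vaillancourt or a pseudodifferential parametrix for the resolvent, only the elementary T\"oplitz facts in the appendix --- and yields an explicit rate, whereas your diagonal argument is qualitative. One small remark: your mollification step only requires a uniform modulus of continuity for the family $\{V_\hbar\}$, so the specific exponent $\tfrac15$ plays no structural role for you, whereas in the paper's Lemma \ref{key lemma for density convergence} it directly determines the $\hbar^{1/10}$ rate; so the exponent is not, as you say, ``just enough margin'' but rather a feature your route never exploits.
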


For the proof, we employ convergence of Hamiltonians in the resolvent sense.

\begin{lemma}[Resolvent convergence]\label{key lemma for density convergence}
Suppose that
$$\sup_{\hbar\in(0,1]}\|V_\hbar\|_{C^{0,\frac{1}{5}}(\mathbb{R}^3)}<\infty.$$
Then, for $0<\hbar\leq 1$, we have
$$\Big\|\textup{Op}^T_\hbar\Big[\frac{1}{\tfrac{1}{2}|p|^2+V_\hbar(q)\pm i}\Big]-\big(-\tfrac{\hbar^2}{2}\Delta+V_\hbar\pm i\big)^{-1}\Big\|\lesssim \hbar^{\frac{1}{10}}\big(1+\|V_\hbar\|_{C^{0,\frac{1}{5}}(\mathbb{R}^3)}\big).$$
\end{lemma}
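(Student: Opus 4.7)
The plan is to compare both sides of the claimed estimate to the intermediate self-adjoint operator $\tilde H_\hbar := \textup{Op}^T_\hbar\big[\tfrac{1}{2}|p|^2 + V_\hbar(q)\big]$, the T\"oplitz quantization of the full classical symbol $h := \tfrac{1}{2}|p|^2 + V_\hbar(q)$. By the triangle inequality, the lemma reduces to two separate estimates: an ``operator-comparison'' bound on $\|(H_\hbar \pm i)^{-1} - (\tilde H_\hbar \pm i)^{-1}\|$, and a ``T\"oplitz-composition'' bound on $\|(\tilde H_\hbar \pm i)^{-1} - \textup{Op}^T_\hbar[1/(h\pm i)]\|$, where $H_\hbar := -\tfrac{\hbar^2}{2}\Delta + V_\hbar$.

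For the operator-comparison piece, I would invoke the explicit formulas recalled in Appendix A: $\textup{Op}^T_\hbar[\tfrac{1}{2}|p|^2]$ coincides with $-\tfrac{\hbar^2}{2}\Delta$ up to an additive constant of order $\hbar$, while $\textup{Op}^T_\hbar[V_\hbar(q)]$ is multiplication by the Gaussian mollification $V_\hbar * G_\hbar$, where $G_\hbar(x) := (\pi\hbar)^{-3/2}e^{-|x|^2/\hbar}$. The difference $K_\hbar := H_\hbar - \tilde H_\hbar$ is then a bounded multiplication operator controlled by the H\"older estimate
\[
\|V_\hbar - V_\hbar * G_\hbar\|_{L^\infty} \le \|V_\hbar\|_{C^{0,1/5}}\int G_\hbar(z)|z|^{1/5}dz \lesssim \hbar^{1/10}\|V_\hbar\|_{C^{0,1/5}}.
\]
Since $H_\hbar$ and $\tilde H_\hbar$ are self-adjoint, $\|(\cdot\pm i)^{-1}\|\le 1$, and the resolvent identity immediately transfers this bound to $\|(H_\hbar\pm i)^{-1} - (\tilde H_\hbar\pm i)^{-1}\|$.

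For the T\"oplitz-composition piece, set $g := 1/(h\pm i)$ and use $\textup{Op}^T_\hbar[1] = I$ to write
\[
(\tilde H_\hbar \pm i)^{-1} - \textup{Op}^T_\hbar[g] = (\tilde H_\hbar \pm i)^{-1}\big(\textup{Op}^T_\hbar[h]\textup{Op}^T_\hbar[g] - \textup{Op}^T_\hbar[hg]\big),
\]
which reduces matters to bounding the T\"oplitz composition defect. Splitting $h = \tfrac{1}{2}|p|^2 + V_\hbar(q)$, the potential defect can be expressed, modulo the already-controlled bounded term $(V_\hbar - V_\hbar * G_\hbar)\cdot\textup{Op}^T_\hbar[g]$, as a ``modified T\"oplitz'' operator
\[
\tfrac{1}{(2\pi\hbar)^3}\int g(q,p)\,|W^\hbar_{q,p}\rangle\langle\varphi^\hbar_{(q,p)}|\,dqdp, \qquad W^\hbar_{q,p}(x) := (V_\hbar(q) - V_\hbar(x))\varphi^\hbar_{(q,p)}(x).
\]
A Cauchy--Schwarz argument based on the resolution of identity, using $\|g\|_{L^\infty}\le 1$ together with the Gaussian moment estimate
\[
\tfrac{1}{(2\pi\hbar)^3}\int|\langle\chi, W^\hbar_{q,p}\rangle|^2 dqdp \lesssim \hbar^{1/5}\|V_\hbar\|_{C^{0,1/5}}^2\|\chi\|_{L^2}^2
\]
(obtained via Plancherel in $p$ followed by Fubini), then yields the required $\hbar^{1/10}\|V_\hbar\|_{C^{0,1/5}}$ operator bound. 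The kinetic defect is handled analogously: a direct expansion of $-\tfrac{\hbar^2}{2}\Delta\varphi^\hbar_{(q,p)}$ produces weighted-T\"oplitz pieces whose ``vectors'' have $L^2$-norm $O(\hbar)$ and $O(|p|\hbar^{1/2})$, and the elementary pointwise bound $\|p\,g\|_{L^\infty}\lesssim 1 + \|V_\hbar\|_\infty^{1/2}$ turns these into a net contribution of order $\hbar^{1/2}$, better than needed.

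The main obstacle will be the composition estimate: the T\"oplitz calculus is not multiplicative, and standard Moyal-type expansions require smooth symbols that $V_\hbar \in C^{0,1/5}$ does not provide. My strategy bypasses any general composition lemma by absorbing the H\"older increment $V_\hbar(q) - V_\hbar(x)$ directly into the coherent-state side of the defect, so that the fractional exponent $\hbar^{1/10}$ emerges naturally from the Gaussian moment $\int G_\hbar(z)|z|^{2/5}dz \sim \hbar^{1/5}$.
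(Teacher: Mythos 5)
Your proof is correct, but it takes a genuinely different route from the paper's. The paper keeps both operators under a single integral: it inserts the coherent-state resolution of identity $\frac{1}{(2\pi\hbar)^3}\int|\varphi^\hbar_{(q,p)}\rangle\langle\varphi^\hbar_{(q,p)}|\,dqdp = I$, applies the pointwise resolvent identity $\tfrac{1}{a}-b^{-1}=\tfrac{1}{a}(b-a)b^{-1}$ with $a=\tfrac{1}{2}|p|^2+V_\hbar(q)\pm i$ and $b=-\tfrac{\hbar^2}{2}\Delta+V_\hbar\pm i$, and then computes the action of $b-a$ on the coherent state, splitting off the $ip\cdot(x-q)$ contribution as an $\hbar\nabla\cdot$ divergence that pairs with the resolvent. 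The resulting bilinear form is then controlled by Parseval in $p$ plus Cauchy--Schwarz, with the H\"older increment $|V_\hbar(x)-V_\hbar(q)|$ weighted against the Gaussian delivering the $\hbar^{1/10}$ gain, and the Fourier-multiplier bound $\|(-\tfrac{\hbar^2}{2}\Delta+V_\hbar(q)\pm i)^{-1}\hbar\nabla\|\lesssim 1+\|V_\hbar\|_{L^\infty}^{1/2}$ absorbing the divergence piece. You instead introduce the intermediate self-adjoint operator $\tilde H_\hbar=\textup{Op}^T_\hbar[h]$ and split into a mollification estimate plus a T\"oplitz-composition defect estimated via Cauchy--Schwarz with the coherent-state completeness. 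The same two analytic facts reappear (the Gaussian moment $\int|z|^{1/5}G_{\hbar/2}(z)\,dz\sim\hbar^{1/10}$ and $\|pg\|_{L^\infty}\lesssim 1+\|V_\hbar\|_{L^\infty}^{1/2}$), just packaged differently. Your version is more modular and highlights the T\"oplitz composition defect as the central object; the paper's version avoids having to define the T\"oplitz quantization of the unbounded symbol $h$ as an operator (your $\tilde H_\hbar$), and it does not rely on the explicit formulas $\textup{Op}^T_\hbar[\tfrac12|p|^2]=-\tfrac{\hbar^2}{2}\Delta+\tfrac{3\hbar}{4}$ and $\textup{Op}^T_\hbar[V_\hbar]=V_\hbar*G^3_{\hbar/2}$, which, while standard, are not actually recorded in Appendix A as you suggest. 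Both proofs are sound.
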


\begin{proof}
We consider the bilinear form 
$$\mathcal{B}(\psi_1,\psi_2):=\Big\langle\psi_1\Big|\textup{Op}^T_\hbar\Big[\frac{1}{\tfrac{1}{2}|p|^2+V_\hbar(q)\pm i}\Big]-\big(-\tfrac{\hbar^2}{2}\Delta+V_\hbar\pm i\big)^{-1}\Big|\psi_2\Big\rangle.$$
Using that
$$\frac{1}{(2\pi\hbar)^3}\int_{\mathbb{R}^6}|\varphi_{(q,p)}^\hbar\rangle\langle \varphi_{(q,p)}^\hbar| dqdp$$
is the identity on $L^2(\mathbb{R}^3)$, the two operators in the bilinear form can be combined into a single integral,
$$\begin{aligned}
&\textup{Op}^T_\hbar\Big[\frac{1}{\tfrac{1}{2}|p|^2+V_\hbar(q)\pm i}\Big]-\big(-\tfrac{\hbar^2}{2}\Delta+V_\hbar\pm i\big)^{-1}\\
&=\frac{1}{(2\pi\hbar)^3}\int_{\mathbb{R}^6}|\varphi_{(q,p)}^\hbar\rangle\langle \varphi_{(q,p)}^\hbar| \Big\{\frac{1}{\tfrac{1}{2}|p|^2+V_\hbar(q)\pm i}-\big(-\tfrac{\hbar^2}{2}\Delta+V_\hbar\pm i\big)^{-1}\Big\}dqdp\\
&=\frac{1}{(2\pi\hbar)^3}\int_{\mathbb{R}^6}|\varphi_{(q,p)}^\hbar\rangle\langle \varphi_{(q,p)}^\hbar|\frac{-\tfrac{\hbar^2}{2}\Delta-\tfrac{1}{2}|p|^2+V_\hbar-V_\hbar(q)}{\tfrac{1}{2}|p|^2+V_\hbar(q)\pm i} \big(-\tfrac{\hbar^2}{2}\Delta+V_\hbar\pm i\big)^{-1}dqdp.
\end{aligned}$$
Hence, we have
\begin{equation}\label{op norm conv proof}
\mathcal{B}(\psi_1,\psi_2)=\frac{1}{(2\pi\hbar)^3}\int_{\mathbb{R}^6} \frac{\langle\psi_1|\varphi_{(q,p)}^\hbar\rangle \langle (-\frac{\hbar^2}{2}\Delta-\tfrac{1}{2}|p|^2+V_\hbar-V_\hbar(q))\varphi_{(q,p)}^\hbar|\tilde{\psi}_2\rangle}{\tfrac{1}{2}|p|^2+V_\hbar(q)\pm i}dqdp,
\end{equation}
where
$$\tilde{\psi}_2:=\big(-\tfrac{\hbar^2}{2}\Delta+V_\hbar\pm i\big)^{-1}\psi_2.$$
By the Fourier transform, the first factor becomes
$$\langle \psi_1|\varphi_{(q,p)}^\hbar\rangle=\overline{\int_{\mathbb{R}^3} \psi_1(x)\frac{1}{(\pi\hbar)^{3/4}}e^{-\frac{|x-q|^2}{2\hbar}} e^{-\frac{ip\cdot x}{\hbar}}dx}=\overline{(\mathcal{F}_x \Psi_1)\left(\tfrac{p}{\hbar}\right)},$$
where
$$\Psi_1(q,x):=\frac{1}{(\pi\hbar)^{3/4}}e^{-\frac{|x-q|^2}{2\hbar}}\psi_1(x).$$
For the second factor, a direct calculation yields
$$\begin{aligned}
\langle (-\tfrac{\hbar^2}{2}\Delta-\tfrac{1}{2}|p|^2+V_\hbar-V_\hbar(q))\varphi_{(q,p)}^\hbar|\tilde{\psi}_2\rangle&=-ip\cdot (\mathcal{F}_x\vec{\Psi}_{2;1})(\tfrac{p}{\hbar})+(\mathcal{F}_x\Psi_{2;2})(\tfrac{p}{\hbar})\\
&=(\mathcal{F}_x(\hbar\nabla\cdot\vec{\Psi}_{2;1}))(\tfrac{p}{\hbar})+(\mathcal{F}_x\Psi_{2;2})(\tfrac{p}{\hbar}),
\end{aligned}$$
where
$$\vec{\Psi}_{2;1}(q,x):= -(x-q) \frac{1}{(\pi\hbar)^{3/4}}e^{-\frac{|x-q|^2}{2\hbar}}\tilde{\psi}_2(x)$$
and
$$\Psi_{2;2}(q,x):=\Big\{-\tfrac{|x-q|^2}{2}+\tfrac{3\hbar}{2}+V_\hbar(x)-V_\hbar(q)\Big\}\frac{1}{(\pi\hbar)^{3/4}}e^{-\frac{|x-q|^2}{2\hbar}}\tilde{\psi}_2(x).$$
Collecting all, we write \eqref{op norm conv proof} as
$$\begin{aligned}
\mathcal{B}(\psi_1,\psi_2)&=\frac{1}{(2\pi\hbar)^3}\int_{\mathbb{R}^6}\overline{(\mathcal{F}_x\Psi_1)(q,\tfrac{p}{\hbar})} \frac{(\mathcal{F}_x(\hbar\nabla\cdot \vec{\Psi}_{2;1})+\mathcal{F}_x\Psi_{2;2})(q,\tfrac{p}{\hbar})}{\tfrac{1}{2}|p|^2+V_\hbar(q)\pm i}dqdp\\
&=\frac{1}{(2\pi)^3}\int_{\mathbb{R}^6} \overline{\mathcal{F}_x\Psi_1(q,p)}\frac{\mathcal{F}_x(\hbar\nabla\cdot \vec{\Psi}_{2;1})(q,p)+\mathcal{F}_x\Psi_{2;2}(q,p)}{\tfrac{\hbar^2}{2}|p|^2+V_\hbar(q)\pm i} dpdq\\
&=\int_{\mathbb{R}^6}\overline{\Psi_1(q,x)}\big(-\tfrac{\hbar^2}{2}\Delta+V_\hbar(q)\pm i\big)^{-1}\Big\{\hbar\nabla\cdot \vec{\Psi}_{2;1}+\Psi_{2;2}\Big\}(q,x)dxdq,
\end{aligned}$$
where Parseval's identity is used for the $p$-variable in the last step. Hence, by H\"older's inequality, we obtain
$$|\mathcal{B}(\psi_1,\psi_2)|\leq\|\Psi_1\|_{L_{q,x}^2}\Big\{\big\|\big(-\tfrac{\hbar^2}{2}\Delta+V_\hbar(q)\pm i\big)^{-1}\hbar\nabla\cdot \vec{\Psi}_{2;1}\big\|_{L_{q,x}^2}+\big\|\big(-\tfrac{\hbar^2}{2}\Delta+V_\hbar(q)\pm i\big)^{-1}\Psi_{2;2}\big\|_{L_{q,x}^2}\Big\}.$$
Note that as Fourier multiplier operators,
$$\big\|\big(-\tfrac{\hbar^2}{2}\Delta+V_\hbar(q)\pm i\big)^{-1}\hbar\nabla\big\|\sim\Big\|\frac{\hbar p}{\tfrac{\hbar^2}{2}|p|^2+V_\hbar(q)\pm i}\Big\|_{L_p^\infty}\lesssim1+\|V_\hbar\|_{L^\infty}^{1/2}$$
and
$$\big\|\big(-\tfrac{\hbar^2}{2}\Delta+V_\hbar(q)\pm i\big)^{-1}\big\|\leq 1.$$
Thus, it follows that 
$$|\mathcal{B}(\psi_1,\psi_2)|\lesssim \|\Psi_1\|_{L_{q,x}^2}\left\{(1+\|V_\hbar\|_{L^\infty})\|\vec{\Psi}_{2;1}\|_{L_{q,x}^2}+\left\|\Psi_{2;2}\right\|_{L_{q,x}^2}\right\}.$$
By the definitions of $\Psi_1$, $\vec{\Psi}_{2;1}$ and $\Psi_{2;2}$, we have
$$\begin{aligned}
\|\Psi_1\|_{L_{q,x}^2}&=\|\psi_1\|_{L^2},\\
\|\vec{\Psi}_{2;1}\|_{L_{q,x}^2}&\lesssim \sqrt{\hbar}\big\|\big(-\tfrac{\hbar^2}{2}\Delta+V_\hbar\pm i\big)^{-1}\psi_2\big\|_{L_x^2}\lesssim\sqrt{\hbar}\|\psi_2\|_{L^2}
\end{aligned}$$
and
$$\begin{aligned}
\|\Psi_{2;2}\|_{L_{q,x}^2}&\lesssim \Big\{\hbar+\hbar^{\frac{1}{10}}\|V_\hbar\|_{C^{0,\frac{1}{5}}}\Big\}\big\|\big(-\tfrac{\hbar^2}{2}\Delta+V_\hbar\pm i\big)^{-1}\psi_2\big\|_{L_x^2}\\
&\lesssim\hbar^{\frac{1}{10}} (1+\|V_\hbar\|_{C^{0,\frac{1}{5}}})\|\psi_2\|_{L^2}.
\end{aligned}$$
Inserting them into the above inequality, we prove that 
$$|\mathcal{B}(\psi_1,\psi_2)|\lesssim \hbar^{\frac{1}{10}}\big(1+\|V_\hbar\|_{C^{0,\frac{1}{5}}}\big)\|\psi_1\|_{L^2}\|\psi_2\|_{L^2}.$$
Therefore, the proposition follows by duality.
\end{proof}

\begin{proof}[Proof of Proposition \ref{Toplitz quant convergence}]
For notational convenience, we denote
$$H_\hbar= \tfrac{1}{2}|p|^2+V_\hbar(q)\quad\textup{and}\quad \hat{H}_\hbar=-\tfrac{\hbar^2}{2}\Delta+V_\hbar.$$
Observe that
$$-\mu_\hbar-H_\hbar=-V_\hbar(q)-(\mu_{\hbar}+\tfrac{1}{2}|p|^2)\leq \|V_\hbar\|_{L^\infty}$$
and as a quadratic form,
$$-\mu_\hbar-\hat{H}_\hbar=-V_\hbar-\big(\mu_\hbar-\tfrac{\hbar^2}{2}\Delta\big)\leq \|V_\hbar\|_{L^\infty}.$$
Thus, introducing a continuous function $\tilde{\alpha}:\mathbb{R}\to\mathbb{R}$ such that
$$\tilde{\alpha}(t)=\left\{\begin{aligned}
&\alpha(t) &&\textup{if }t\leq A+1, \\
&0 &&\textup{if }t\geq A+2,
\end{aligned}\right.$$
where
$$A:=\sup_{\hbar\in(0,1]}\|V_\hbar\|_{L^\infty},$$
we may replace $\alpha$ by $\tilde{\alpha}$ in \eqref{density correspondence proof claim}. 

Recall that by the Stone-Weierstrass theorem, polynomials in $\frac{1}{t+i}$ and $\frac{1}{t-i}$ are dense in $C_0(\mathbb{R})$, that is, the collection of continuous functions vanishing at infinity (see \cite{ReedSimon1}). Hence, given $\epsilon>0$, there exists a polynomial
\[
P(t) = \sum_{j,k=1}^N a_{jk}\frac{1}{(t+i)^j}\frac{1}{(t-i)^k}
\]
such that
$$\|\tilde{\alpha}-P\|_{C_0(\mathbb{R};\mathbb{C})}\leq\epsilon.$$
Using this polynomial, we decompose 
\[
\begin{aligned}
&\textup{Op}^T_\hbar\big[\alpha(-\mu_\hbar-H_\hbar)_+\big]-\alpha(-\mu_\hbar-\hat{H}_\hbar)_+\\
& =  \textup{Op}^T_\hbar\big[\tilde{\alpha}(-\mu_\hbar-H_\hbar)\big]- \tilde{\alpha}(-\mu_\hbar-\hat{H}_\hbar) \\
& =\underbrace{\textup{Op}^T_\hbar\big[P(-\mu_\hbar-H_\hbar))\big]-P(-\mu_\hbar-\hat{H}_\hbar)}_{A_\hbar}+ \underbrace{\textup{Op}^T_\hbar\big[(\tilde{\alpha}-P)(-\mu_\hbar-H_\hbar)\big]}_{B_\hbar}  \\
&\quad -\underbrace{(\tilde{\alpha}-P)(-\mu_\hbar-\hat{H}_\hbar)}_{C_\hbar}.
\end{aligned}
\]
By the property of the T\"oplitz quantization (Propositions \ref{TQ-bounded}) and functional calculus (see \cite{ReedSimon1} for instance) respectively, we obtain 
$$\begin{aligned}
\|B_\hbar\| &\leq \big\|(\tilde{\alpha}-P)(-\mu_\hbar-H_\hbar)\big\|_{L^\infty(\mathbb{R}^6)} \leq \epsilon,\\
\|C_\hbar\|&\leq \|\tilde{\alpha}-P\|_{L^\infty(\mathbb{R})}\leq\epsilon.
\end{aligned}$$
For $A_\hbar$, we further decompose as 
$$A_\hbar = A_{1;\hbar}+A_{2;\hbar},$$
where
\[
\begin{aligned}
A_{1;\hbar}&= \sum_{j,k=1}^N(-1)^{j+k}a_{jk}\textup{Op}^T_\hbar\left[\frac{1}{(H_\hbar+\mu_\hbar-i)^j (H_\hbar+\mu_\hbar+i)^k}\right]\\
&\quad-\sum_{j,k=1}^N(-1)^{j+k}a_{jk}\textup{Op}^T_\hbar\left[\frac{1}{H_\hbar +\mu_\hbar-i}\right]^j\textup{Op}^T_\hbar\left[\frac{1}{H_\hbar+\mu_\hbar+i}\right]^k
\end{aligned}
\]
and
\[
\begin{aligned}
A_{2;\hbar} &=\sum_{j,k=1}^N(-1)^{j+k}a_{jk}\textup{Op}^T_\hbar\left[\frac{1}{H_\hbar +\mu_\hbar-i}\right]^j\textup{Op}^T_\hbar\left[\frac{1}{H_\hbar+\mu_\hbar+i}\right]^k\\
&\quad- \sum_{j,k=1}^N(-1)^{j+k}a_{jk}\frac{1}{(\hat{H}_\hbar+\mu_\hbar-i)^j}\frac{1}{(\hat{H}_\hbar+\mu_\hbar+i)^k}.
\end{aligned}
\]
Then, Proposition \ref{TQ-est2} implies that $\|A_{1;\hbar}\|\to 0$, while Lemma \ref{key lemma for density convergence} implies that $\|A_{2;\hbar}\|\to 0$. Therefore, collecting all, we complete the proof. 
\end{proof}

\section{Weyl's law for long-range potentials}

In the next section, Weyl's law will be employed to compare quantum and classical functionals. Weyl's law is a classical theorem providing an asymptotic relation between the number of eigenvalues and the volume in the phase space. As for Schr\"odinger operators with decaying potentials, Weyl's law is mostly stated under the assumption that a potential is contained in $L^{\frac{3}{2}}(\mathbb{R}^3)$, and this assumption is unavoidable to count all negative eigenvalues. Meanwhile, the potential functions we deal with here are not in $L^{\frac{3}{2}}(\mathbb{R}^3)$, because $U=\frac{1}{|x|}*\rho_{\mathcal{Q}}$ decay at best $\sim\frac{1}{|x|}$ as $x\to\infty$. Nevertheless, in our case,  the cutoffs in free energy minimizers rule out negative eigenvalues near zero, and this fact allows us to include long-range potentials. In this section, we give a version of Weyl's law that fits into our long range setting. 

Let $V$ be a real-valued potential. For $E>0$ and $\hbar>0$, we denote by $N(-E,\hbar, V)$ the number of eigenvalues (counting multiplicity) of the Schr\"odinger operator 
$-\tfrac{\hbar^2}{2}\Delta+V$ less than $-E$, i.e.,
$$N(-E,\hbar, V)=\textup{dim}\Big\{\textup{Ran}\ \mathbf{1}_{(-\infty,-E)}(-\tfrac{\hbar^2}{2}\Delta+V)\Big\}.$$

\begin{theorem}[Weyl's law]\label{Weyl's law}
Let $E>0$. Suppose that $V\in L^r(\mathbb{R}^3)$ for some $r\geq\frac{3}{2}$. Then, we have
\begin{equation}\label{eq: Weyl's law}
\lim_{\hbar\to0}(2\pi\hbar)^3N(-E,\hbar, V)=\Big|\Big\{(q,p)\in\mathbb{R}^6:\ \tfrac{|p|^2}{2}+V(q)\leq-E\Big\}\Big|.
\end{equation}
\end{theorem}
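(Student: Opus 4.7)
The strategy is to reduce the long-range case to the standard short-range Weyl law of Reed--Simon \cite[Theorem XIII.80]{ReedSimon4} by splitting $V$ into an $L^{3/2}$ piece plus a uniformly small bounded remainder, and then to exploit the gap $E>0$ to absorb the bounded perturbation. Fix $\delta\in(0,E/2)$ and write
$$V=V_1^\delta+V_2^\delta,\qquad V_1^\delta:=V\mathbf{1}_{\{|V|>\delta\}},\qquad V_2^\delta:=V\mathbf{1}_{\{|V|\leq\delta\}}.$$
On $\{|V|>\delta\}$ one has $|V|^{3/2}\leq\delta^{3/2-r}|V|^r$ because $r\geq 3/2$, hence $V_1^\delta\in L^{3/2}(\mathbb{R}^3)$, while $\|V_2^\delta\|_{L^\infty}\leq\delta$.

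Since $-\delta\leq V_2^\delta\leq\delta$ as multiplication operators, one has the operator inequality $-\tfrac{\hbar^2}{2}\Delta+V_1^\delta-\delta\leq -\tfrac{\hbar^2}{2}\Delta+V\leq -\tfrac{\hbar^2}{2}\Delta+V_1^\delta+\delta$, and monotonicity of the min-max counting function yields the sandwich
$$N(-E-\delta,\hbar,V_1^\delta)\leq N(-E,\hbar,V)\leq N(-E+\delta,\hbar,V_1^\delta).$$
For each fixed $\delta$ the potential $V_1^\delta$ is short-range, so the standard semi-classical Weyl law applies and gives
$$\lim_{\hbar\to 0}(2\pi\hbar)^3 N(-E\pm\delta,\hbar,V_1^\delta)=\tfrac{4\pi}{3}\int_{\mathbb{R}^3}\bigl(-2(E\mp\delta)-2V_1^\delta(q)\bigr)_+^{3/2}dq,$$
where we used the fiber formula $|\{(q,p):\tfrac{|p|^2}{2}+W(q)\leq -E'\}|=\tfrac{4\pi}{3}\int(-2E'-2W(q))_+^{3/2}dq$.

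It remains to pass $\delta\to 0$ on the right-hand side. Setting $g_\delta^\pm(q):=(-2(E\mp\delta)-2V_1^\delta(q))_+^{3/2}$, I note that for $\delta<E/2$ the support of $g_\delta^\pm$ forces $V_1^\delta(q)<-E+\delta<-E/2<0$, which rules out $V_1^\delta(q)=0$ and hence forces $V_1^\delta(q)=V(q)$ on that support; thus $g_\delta^\pm(q)\to(-2E-2V(q))_+^{3/2}$ pointwise a.e. Moreover $g_\delta^\pm\leq 8|V|^{3/2}\mathbf{1}_{\{V<-E/2\}}$, which is integrable because $\int_{\{V<-E/2\}}|V|^{3/2}dq\leq(E/2)^{3/2-r}\|V\|_{L^r}^r<\infty$ for $r\geq 3/2$. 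Dominated convergence then closes the sandwich and yields \eqref{eq: Weyl's law}. The principal subtlety, and the one place where care is needed, is keeping the order of limits straight: one must send $\hbar\to 0$ first (at fixed $\delta$) so that the short-range Weyl law can be invoked for $V_1^\delta$, and only afterwards let $\delta\to 0$; the positivity $E>0$ is indispensable, as it is precisely this gap that allows one to absorb the $O(\delta)$ perturbation from $V_2^\delta$ and simultaneously discard the uncontrolled region $\{|V|\leq\delta\}$ from the support of $g_\delta^\pm$.
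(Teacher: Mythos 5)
Your truncation $V=V_1^\delta+V_2^\delta$ with $V_1^\delta=V\mathbf{1}_{\{|V|>\delta\}}\in L^{3/2}$ and $\|V_2^\delta\|_{L^\infty}\leq\delta$, followed by the operator sandwich $N(-E-\delta,\hbar,V_1^\delta)\leq N(-E,\hbar,V)\leq N(-E+\delta,\hbar,V_1^\delta)$, is a genuinely cleaner mechanism than the paper's smooth-approximation-plus-CLR argument for passing from $L^r$ to $L^{3/2}$: the $L^\infty$-smallness of the remainder is absorbed directly by a $\delta$-shift of the threshold and closed by dominated convergence on the classical side, with no need for Lemma~\ref{CLR bound}.

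However, there is a genuine gap in the step that treats the base case as a black box. You assert that ``the standard semi-classical Weyl law applies'' to give $\lim_{\hbar\to 0}(2\pi\hbar)^3N(-E\pm\delta,\hbar,V_1^\delta)$ for $V_1^\delta\in L^{3/2}$, citing \cite[Theorem XIII.80]{ReedSimon4}. But the standard short-range result is a threshold-$0$ statement: it gives the asymptotics of $N(0,\hbar,W)$ for $W\in L^{3/2}$, i.e.\ counting all negative eigenvalues. What you need is the asymptotics at a strictly negative threshold $-E'<0$, and the reduction $N(-E',\hbar,V_1^\delta)=N(0,\hbar,V_1^\delta+E')$ takes the potential out of $L^{3/2}$ because of the constant shift. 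The $\limsup$ direction does survive (replace $V_1^\delta+E'$ by $\min(V_1^\delta+E',0)\in L^{3/2}$ and note the phase-space volumes agree), but the $\liminf$ direction does not follow from the threshold-$0$ result: there is no $W\in L^{3/2}$ with $W\geq V_1^\delta+E'$ since $V_1^\delta+E'\to E'>0$ at infinity, so there is nothing to compare against from below. This lower bound is exactly the nontrivial content of the paper's first step (Dirichlet--Neumann bracketing for continuous compactly supported $V$, with the remark ``the presence of the constant $E$ is not essential''), and it is not covered by Theorem~XIII.80 as stated. Your truncation idea reduces $L^r$ to $L^{3/2}$ but does not reduce $L^{3/2}$ to $C_c$ (the remainder $V-V\mathbf{1}_{B_R}$ is $L^{3/2}$-small but not $L^\infty$-small), so one cannot iterate the sandwich to reach the compactly supported case; some additional control, such as the paper's CLR bound, is still required there. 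To complete the argument you would need to either prove the negative-threshold Weyl law for $C_c$ potentials as the paper does and then pass to $L^{3/2}$, or cite a reference that genuinely states the semi-classical Weyl law at a strictly negative threshold for $L^{3/2}$ potentials.
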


\begin{remark}
Since $E>0$, the integral on the right hand side of \eqref{eq: Weyl's law} is finite when $V\in L^r(\mathbb{R}^3)$ with $r\geq\frac{3}{2}$ (see Lemma \ref{classical CLR}).
\end{remark}

In fact, the proof of Theorem \ref{Weyl's law} requires merely minor modifications of that of \cite[Theorem XIII.80]{ReedSimon4}, so we just give a sketch of it. One difference is that the following variant of the Cwikel-Lieb-Rozenblum (CLR) bound is employed to deal with long-range potentials.

\begin{lemma}[Cwikel-Lieb-Rozenblum type bound]\label{CLR bound}
Let $E>0$. Suppose that $V\in L^r(\mathbb{R}^3)$ for some $r\geq\frac{3}{2}$. Then, we have
$$N(-E, \hbar, V)\lesssim \frac{1}{E^{r-\frac{3}{2}}\hbar^3}\|V_-\|_{L^r(\mathbb{R}^3)}^r.$$
\end{lemma}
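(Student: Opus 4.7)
The plan is to reduce the statement to the semiclassical-free case $\hbar=1$ by a scaling argument and then deduce it from the dual Lieb--Thirring inequality already recorded as Lemma \ref{LT dual}.

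First, I would exploit the scaling already observed in the proof of \eqref{eq: semi-classical LT dual}: $\lambda$ is an eigenvalue of $-\tfrac{\hbar^2}{2}\Delta+V$ if and only if $\lambda$ is an eigenvalue of $-\Delta+V(\tfrac{\hbar}{\sqrt{2}}\cdot)$, via the change of variables $\phi(x)\mapsto \phi(\tfrac{\hbar}{\sqrt{2}}y)$. Hence
\[
N(-E,\hbar,V)=N\big(-E,1,V(\tfrac{\hbar}{\sqrt 2}\cdot)\big),
\]
and the Jacobian of this substitution gives
\[
\big\|\big(V(\tfrac{\hbar}{\sqrt 2}\cdot)\big)_-\big\|_{L^r(\mathbb{R}^3)}^r=\big(\tfrac{\sqrt 2}{\hbar}\big)^3\|V_-\|_{L^r(\mathbb{R}^3)}^r.
\]
Consequently, it suffices to show that for any $W\in L^r(\mathbb{R}^3)$ with $r\geq\tfrac32$ and any $E>0$,
\[
N(-E,1,W)\lesssim E^{\frac32-r}\,\|W_-\|_{L^r(\mathbb{R}^3)}^r,
\]
and then insert $W=V(\tfrac{\hbar}{\sqrt 2}\cdot)$ to recover the desired bound with the $\hbar^{-3}$ factor.

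For the $\hbar=1$ estimate, I would split into two cases according to whether $r>\tfrac32$ or $r=\tfrac32$. When $r>\tfrac32$, I set $\alpha=r-\tfrac32>0$ and apply Lemma \ref{LT dual} to the negative eigenvalues $\lambda_1\leq\lambda_2\leq\cdots<0$ of $-\Delta+W$, obtaining
\[
\sum_{j}|\lambda_j|^{r-\frac32}\lesssim \int_{\mathbb{R}^3}(W_-)^{r}\,dx=\|W_-\|_{L^r}^{r}.
\]
Restricting the sum to those $\lambda_j\leq -E$, each term is at least $E^{r-\frac32}$, so
\[
E^{r-\frac32}\,N(-E,1,W)\leq \sum_{\lambda_j\leq -E}|\lambda_j|^{r-\frac32}\lesssim \|W_-\|_{L^r}^{r},
\]
which rearranges to the claim. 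When $r=\tfrac32$, the factor $E^{3/2-r}=1$, and the inequality reduces to the ordinary Cwikel--Lieb--Rozenblum bound $N(0,1,W)\lesssim \|W_-\|_{L^{3/2}}^{3/2}$; since $N(-E,1,W)\leq N(0,1,W)$ trivially, this endpoint case follows as well.

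I do not anticipate a genuine obstacle here: the two ingredients (semiclassical scaling and the dual Lieb--Thirring bound) are already present in the excerpt, and the essential conceptual point is merely that the spectral gap $|\lambda|\geq E$ trades integrability of $W_-$ above $L^{3/2}$ for a uniform lower bound on each counted eigenvalue. The only mild care needed is to make sure the scaling identities track factors of $\sqrt 2$ correctly, which only affects the implicit constant.
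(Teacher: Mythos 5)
Your argument is correct, but it takes a genuinely different route than the paper's. For $r>\frac{3}{2}$ you derive the bound by Chebyshev from the dual Lieb--Thirring inequality of Lemma~\ref{LT dual}: setting $\alpha=r-\tfrac{3}{2}>0$ gives $\sum_j |\lambda_j|^{r-3/2}\lesssim \|V_-\|_{L^r}^r$ (after scaling out $\hbar$, or equivalently by invoking \eqref{eq: semi-classical LT dual} directly), and restricting the sum to $\lambda_j\le -E$ yields $E^{r-3/2}N\lesssim \hbar^{-3}\|V_-\|_{L^r}^r$. This correctly reuses a lemma the paper already records, and in fact one can streamline it slightly by citing \eqref{eq: semi-classical LT dual} to avoid tracking the Jacobian by hand. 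The paper instead gives a from-scratch trace argument: after normalizing $E=1$, $\hbar=\sqrt 2$, it builds $\gamma=\sum_j |\psi_j\rangle\langle\psi_j|$ from eigenfunctions normalized with respect to the $H^1$ inner product, uses $\mathrm{Tr}\,\gamma^{1/2}(1-\Delta+V)\gamma^{1/2}\le 0$ to get $N\le \|V_-\|_{L^r}\|\rho_\gamma\|_{L^{r'}}$, and then controls $\|\rho_\gamma\|_{L^{r'}}^{r'}\lesssim N$ via Rumin's inequality ($L^3$) and the trivial mass bound ($L^1$), using $r'\le 3$. The paper's route handles all $r\ge\tfrac32$ uniformly and is self-contained; yours is shorter but needs the classical CLR bound as an external ingredient at the endpoint $r=\tfrac32$, where $\alpha=0$ falls outside the hypotheses of Lemma~\ref{LT dual}. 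That reliance is legitimate (CLR is standard), but you should note it, since neither the CLR statement nor its proof appears in the paper — the Rumin-based argument is precisely what lets the authors avoid quoting it.
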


\begin{proof}
The proof is identical to that in \cite{Frank}, but it is included for readers' convenience.

It suffices to show the inequality for $E=1$ and $\hbar=\sqrt{2}$. Indeed, a simple calculation shows $(-\tfrac{\hbar^2}{2}\Delta+V)\phi=\lambda\phi$ with $\lambda<-E$ if and only if $(-\Delta+\tfrac{1}{E}V(\tfrac{\hbar}{\sqrt{2E}}\cdot))\phi(\tfrac{\hbar}{\sqrt{2E}}\cdot)=\tfrac{\lambda}{E}\phi(\tfrac{\hbar}{\sqrt{2E}}\cdot)$ with $\frac{\lambda}{E}<-1$. Thus, $N(-E,\hbar,V)=N(-1, \sqrt{2}, \tfrac{1}{E}V(\tfrac{\hbar}{\sqrt{2E}}\cdot))$ holds, so the proof of the lemma can be reduced to the special case $E=1$ and $\hbar=\sqrt{2}$.

Let $\psi_1,..., \psi_N$ be linearly independent functions in the spectral space corresponding to the spectrum of $-\Delta +V$ on the negative interval $(-\infty,-1)$. We normalize these functions so that $\bra{\sqrt{1-\Delta}\psi_j}\ket{\sqrt{1-\Delta}\psi_k}=\delta_{jk}$, and set $\gamma=\sum_{j=1}^N\ket{\psi_j}\bra{\psi_j}$. By construction,
$$0\leq\sqrt{1-\Delta}\gamma\sqrt{1-\Delta}\leq 1\textup{ and }\textup{Tr}\gamma^{1/2}(1-\Delta)\gamma^{1/2}=N.$$ Hence, we have
\begin{equation}\label{CLR proof}
\begin{aligned}
0&\geq\textup{Tr}\big(\gamma^{1/2}(1-\Delta+V)\gamma^{1/2}\big)=\textup{Tr}\big(\gamma^{1/2}(1-\Delta)\gamma^{1/2}\big)+\textup{Tr}(V\gamma)\\
&=N+\int_{\mathbb{R}^3} V(x)\rho_\gamma(x)dx\geq N-\int_{\mathbb{R}^3} V_-(x)\rho_\gamma(x)dx\\
&\geq N-\|V_-\|_{L^r}\|\rho_\gamma\|_{L^{r'}}.
\end{aligned}
\end{equation}
On the other hand, by Rumin's inequality \cite{Rumin}
$$\|\rho_\gamma\|_{L^3}^3\lesssim\textup{Tr}\big(\gamma^{1/2}(-\Delta)\gamma^{1/2}\big)$$
for $0\leq\gamma\leq(-\Delta)^{-1}$, we obtain 
$$\begin{aligned}
\|\rho_\gamma\|_{L^{r'}}^{r'}&=\int_{\rho_\gamma(x)\geq 1}+\int_{\rho_\gamma(x)\leq 1} \rho_\gamma(x)^{r'}dx\leq\|\rho_\gamma\|_{L^3}^3+\|\rho_\gamma\|_{L^1}\\
&\lesssim\textup{Tr}\big(\gamma^{1/2}(-\Delta)\gamma^{1/2}\big)+\textup{Tr}\gamma=\textup{Tr}\big(\gamma^{1/2}(1-\Delta)\gamma^{1/2}\big)=N,
\end{aligned}$$
where in the first inequality, we used that $r'\leq 3$. Inserting this bound in \eqref{CLR proof}, we prove the desired inequality.
\end{proof}

We also employ the classical analogue of the above CLR bound.
\begin{lemma}\label{classical CLR}
Let $E>0$. Suppose that $V\in L^r(\mathbb{R}^3)$ for some $r\geq\frac{3}{2}$. Then, we have
$$\Big|\Big\{(q,p)\in\mathbb{R}^6: \tfrac{1}{2}|p|^2+V(q)\leq-E\Big\}\Big|\lesssim E^{-(r-\frac{3}{2})}\|V_-\|_{L^r(\mathbb{R}^3)}^r.$$
\end{lemma}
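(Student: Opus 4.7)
The plan is to evaluate the phase-space volume by integrating out the momentum variable and then pointwise-bounding the resulting $q$-integrand in terms of $V_-$.

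First, note that the condition $\tfrac{1}{2}|p|^2 + V(q) \leq -E$ forces $V(q) \leq -E < 0$, so $V(q) = -V_-(q)$ and the set is nonempty only where $V_-(q) \geq E$. For each such $q$, the $p$-slice is the ball of radius $\sqrt{2(V_-(q)-E)}$. Applying Fubini, I would write
\[
\Big|\Big\{(q,p)\in\mathbb{R}^6:\tfrac{1}{2}|p|^2+V(q)\leq-E\Big\}\Big|=\frac{4\pi\cdot 2^{3/2}}{3}\int_{\{V_-\geq E\}}\bigl(V_-(q)-E\bigr)^{3/2}dq.
\]

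Next I would estimate the integrand pointwise. On $\{V_-\geq E\}$ one has $(V_-(q)-E)^{3/2}\leq V_-(q)^{3/2}$, and also $1\leq (V_-(q)/E)^{r-3/2}$ since $r\geq\tfrac{3}{2}$. Multiplying these two bounds gives
\[
\bigl(V_-(q)-E\bigr)^{3/2}\leq E^{-(r-3/2)}V_-(q)^{r}\qquad\text{on }\{V_-\geq E\}.
\]
Integrating over $\mathbb{R}^3$ and discarding the restriction $\{V_-\geq E\}$ (which only enlarges the domain) yields the claimed bound
\[
\Big|\Big\{(q,p)\in\mathbb{R}^6:\tfrac{1}{2}|p|^2+V(q)\leq-E\Big\}\Big|\lesssim E^{-(r-3/2)}\|V_-\|_{L^r(\mathbb{R}^3)}^r.
\]

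There is no real obstacle here: the argument is just direct phase-space integration combined with the elementary observation that when $V_-(q)\geq E$ and $r\geq 3/2$, we may trade the gap $V_-(q)-E$ for extra powers of $V_-(q)$ at the cost of a factor $E^{-(r-3/2)}$. The mild point worth noting is the parallel with the quantum Cwikel--Lieb--Rozenblum bound in Lemma \ref{CLR bound}: both inequalities exhibit the same $E^{-(r-3/2)}$ dependence, which is precisely what will allow the classical right-hand side of Weyl's law to be a sensible limit of $(2\pi\hbar)^3 N(-E,\hbar,V)$ in Theorem \ref{Weyl's law}.
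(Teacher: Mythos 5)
Your argument is correct and is essentially the paper's proof: both compute the phase-space volume by integrating out $p$ to get $\frac{8\sqrt{2}\pi}{3}\int (V_-(q)-E)_+^{3/2}\,dq$ and then bound the integrand pointwise by $V_-^r$ times the appropriate power of $E$. The only cosmetic difference is that the paper normalizes to $E=1$ first and recovers the general case by the scaling $p\mapsto\sqrt{E}\,p$, whereas you build the factor $E^{-(r-3/2)}$ directly into the pointwise bound; the two are trivially interchangeable.
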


\begin{proof}
It is obvious that $\tfrac{1}{2}|p|^2+V(q)\leq-1$ if and only if $\frac{1}{2}|p|^2-\tilde{V}(q)\leq-1$, where $\tilde{V}(q):=\max\{-V(q), 1\}$. Hence, a simple estimate yields the lemma with $E=1$,  
$$\Big|\Big\{(q,p): \tfrac{1}{2}|p|^2+V(q)\leq-1\Big\}\Big|=\frac{8\sqrt{2}\pi}{3}\int_{\tilde{V}(x)\geq1} \big(\tilde{V}(x)-1\big)^{\frac{3}{2}}dx\leq\frac{8\sqrt{2}\pi}{3}\int_{\mathbb{R}^3} V_-(x)^r dx.$$
Then, the desired inequality for $E>0$ follows by scaling $p\mapsto\sqrt{E}p$.
\end{proof}

\begin{proof}[Sketch of Proof of Theorem \ref{Weyl's law}]
First, we prove the theorem assuming that $V$ is continuous and it has compact support. We observe that $N(-E,\hbar, V)=N(0,\sqrt{2},\tfrac{1}{\hbar^2}(V+E))$, because $(-\tfrac{\hbar^2}{2}\Delta+V)\phi=\lambda_j\phi$ with $\lambda_j<-E$ if and only if $(-\Delta+\frac{2}{\hbar^2}(V+E))\phi=\tilde{\lambda}_j\phi$ with $\tilde{\lambda}_j=\frac{2}{\hbar^2}(\lambda_j+E)<0$. Thus, we may reformulate \eqref{eq: Weyl's law} in a similar form as \cite[Section XIII.15]{ReedSimon4}:
\begin{equation}\label{eq: Weyl's law'}
\lim_{\hbar\to0}(2\pi\hbar)^3N\big(0,\sqrt{2},\tfrac{2}{\hbar^2}(V+E)\big)=\Big|\Big\{(q,p)\in\mathbb{R}^6:\ \tfrac{1}{2}|p|^2+V(q)\leq-E\Big\}\Big|.
\end{equation}
To show \eqref{eq: Weyl's law'}, we follow the argument in  \cite{ReedSimon4}. We take a large cube $(-k,k)^3$ containing the support of $V$, and decompose into small cubes. Then, we approximate the operator $-\Delta+\frac{2}{\hbar^2}(V+E)$ as a direct sum of $-\Delta+\frac{2E}{h^2}$ on $\mathbb{R}^3\setminus(-k,k)^3$ and Dirichlet and Neumann Laplacians with constant potentials on small cubes. Here, the presence of the constant $E$ is not essential. Thus, \eqref{eq: Weyl's law'} can be proved by the same way.

We now consider the case $V$ is merely in $L^r$. Fix arbitrarily small $\delta\in (0,\frac{E}{2}]$, and for small $\epsilon>0$, let $V_\epsilon\in C_c^\infty(\mathbb{R}^3)$ such that $\|V-V_\epsilon\|_{L^r}\leq\epsilon$. Then, we decompose
$$-\tfrac{\hbar^2}{2}\Delta+V+E=\Big[-\tfrac{(1-\delta)\hbar^2}{2}\Delta+V_\epsilon+E-\delta\Big]+\Big[-\tfrac{\delta\hbar^2}{2}\Delta+\delta+(V-V_\epsilon)\Big].$$
Recalling from the proof of [Reed-Simon, Theorem XIII.80] that
\begin{equation}\label{eigenvalue inequality}
\textup{dim}\Big\{\textup{Ran}\ \mathbf{1}_{(-\infty,0)}(A+B)\Big\}\leq \textup{dim}\Big\{\textup{Ran}\ \mathbf{1}_{(-\infty,0)}(A)\Big\}+\textup{dim}\Big\{\textup{Ran}\ \mathbf{1}_{(-\infty,0)}(B)\Big\}
\end{equation}
provided that the quadratic forms $A$ and $B$ are self-adjoint and bounded below, and $Q(A)\cap Q(B)$ are dense, we write
\begin{equation}\label{L^r Weyl's law - one direction}
\begin{aligned}
N(-E,\hbar,V)&=\textup{dim}\Big\{\textup{Ran}\ \mathbf{1}_{(-\infty,0)}(-\tfrac{\hbar^2}{2}\Delta+V+E)\Big\}\\
&\leq\textup{dim}\Big\{\textup{Ran}\ \mathbf{1}_{(-\infty,0)}(-\tfrac{(1-\delta)\hbar^2}{2}\Delta+V_\epsilon+E-\delta)\Big\}\\
&\quad+\textup{dim}\Big\{\textup{Ran}\ \mathbf{1}_{(-\infty,0)}(-\tfrac{\delta\hbar^2}{2}\Delta+\delta+(V-V_\epsilon))\Big\} \\
&=\textup{dim}\Big\{\textup{Ran}\ \mathbf{1}_{(-\infty,-\frac{E-\delta}{1-\delta})}(-\tfrac{\hbar^2}{2}\Delta+\tfrac{1}{1-\delta}V_\epsilon)\Big\}\\
&\quad+\textup{dim}\Big\{\textup{Ran}\ \mathbf{1}_{(-\infty,-1)}(-\tfrac{\hbar^2}{2}\Delta+\tfrac{1}{\delta}(V-V_\epsilon))\Big\} \\
&=N\Big(-\tfrac{E-\delta}{1-\delta},\hbar,\tfrac{1}{1-\delta}V_\epsilon\Big)+N\Big(-1, \hbar, \tfrac{1}{\delta}(V-V_\epsilon)\Big).
\end{aligned}
\end{equation}
For the first term on the right hand side of \eqref{L^r Weyl's law - one direction}, we apply Theorem \ref{Weyl's law} for compactly supported continuous potentials,
$$\begin{aligned}
\lim_{\hbar\to0}(2\pi\hbar)^3N\Big(-\tfrac{E-\delta}{1-\delta},\hbar,\tfrac{1}{1-\delta}V_\epsilon\Big)&=\Big|\Big\{(q,p)\in\mathbb{R}^6:\ \tfrac{1}{2}|p|^2+\tfrac{1}{1-\delta}V(q)\leq-\tfrac{E-\delta}{1-\delta}\Big\}\Big|\\
&=\frac{1}{(1-\delta)^{\frac{3}{2}}}\Big|\Big\{(q,p)\in\mathbb{R}^6:\ \tfrac{1}{2}|p|^2+V(q)\leq-(E-\delta)\Big\}\Big|.
\end{aligned}.$$
For the second term, we apply the Cwikel-Lieb-Rozenblum bound (Lemma \ref{CLR bound}),
$$(2\pi\hbar)^3N\left(-1, \hbar, \tfrac{1}{\delta}(V-V_\epsilon)\right)\lesssim\frac{1}{\delta^r}\|V-V_\epsilon\|_{L^r}^r=\frac{\epsilon^r}{\delta^r}.$$
Collecting, we obtain
$$\limsup_{n\to\infty}(2\pi\hbar)^3N(-E,\hbar,V)\leq \frac{1}{(1-\delta)^{\frac{3}{2}}}\Big|\Big\{(q,p)\in\mathbb{R}^6:\ \tfrac{1}{2}|p|^2+V(q)\leq-(E-\delta)\Big\}\Big|+O\left(\frac{\epsilon^r}{\delta^r}\right).$$
Therefore, sending $\epsilon\to0$ and then $\delta\to 0$, we prove that 
$$\limsup_{n\to\infty}(2\pi\hbar)^3N(-E,\hbar,V)\leq\Big|\Big\{(q,p)\in\mathbb{R}^6:\ \tfrac{1}{2}|p|^2+V(q)\leq-E\Big\}\Big|.$$
For the opposite direction, we decompose 
$$-\tfrac{\hbar^2}{2}\Delta+V_\epsilon+E+\delta=\Big[-\tfrac{(1-\delta)\hbar^2}{2}\Delta+(V+E)\Big]+\Big[-\tfrac{\delta\hbar^2}{2}\Delta+\delta+(V_\epsilon-V)\Big].$$
Then, repeating the same argument, we prove that
$$\liminf_{\hbar\to 0}(2\pi\hbar)^3N(-E,\hbar,V)\geq\Big|\Big\{(q,p)\in\mathbb{R}^6:\ \tfrac{1}{2}|p|^2+V(q)\leq-E\Big\}\Big|.$$
Combining two inequalities, we complete the proof.
\end{proof}

\section{Quantum states from classical minimizers}\label{sec: CtoQ}

We begin this section with summarizing properties of minimizers for the classical variational problem \eqref{classical VP}.
\begin{proposition}[Properties of classical minimizers \cite{GuoRein1, Rein}]\label{prop-Q}
Suppose that $\beta$ satisfies \textup{\textbf{(A1)}$-$\textbf{(A3)}}. Given $M>0$, let $\mathcal{Q}$ be a minimizer for the variational problem \eqref{classical VP} of the form 
$$\mathcal{Q}=\tilde{\beta}\big(-\mu-(\tfrac{|p|^2}{2}-U(q))\big),$$
where $U=\frac{1}{|x|}*\rho_{\mathcal{Q}}$ (see Theorem \ref{classical theorem}). Then, it satisfies the following properties.
\begin{enumerate}[$(i)$]
\item (Negativity of minimum value) $\mathcal{J}_{\text{min}} = \mathcal{J}(\mathcal{Q}) < 0$.
\item (Support) $\mathcal{Q}$ has compact support in $\R^6$. 
\item (Lagrange multiplier) 
\[
-\mu = \frac{1}{M}\int_{\mathbb{R}^6}(\tfrac{|p|^2}{2}-U(q)+\beta'(\mathcal{Q}))\mathcal{Q} \,dqdp < 0.
\] 
\item (Symmetry) $\rho_{\mathcal{Q}}$ and $U$ are radially symmetric up to translation.
\item (Regularity)
$\rho_{\mathcal{Q}} \in C^1_c(\R^3)$ and $U \in C^2_{\text{loc}}(\R^3) \cap L^{3,\infty}(\R^3)$. In particular, $U\in L^\infty(\R^3)$.
\item (Uniqueness)
If $\beta(s) = s^m,$ for some $m>\frac{5}{3}$, then the minimizer $\mathcal{Q}$ is unique up to translation. 
\end{enumerate}
\end{proposition}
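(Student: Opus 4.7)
The plan is to follow the scheme of Guo and Rein \cite{GuoRein1, Rein}, splitting the work into five essentially independent pieces: items $(i)$-$(iii)$ follow from the variational principle together with a careful scaling analysis, $(iv)$ from the Riesz rearrangement inequality, $(v)$ from elliptic regularity applied to the semilinear Poisson equation satisfied by $U$, and $(vi)$ from an ODE analysis in the polytropic case.

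For $(i)$ I would apply to a nice test distribution $f_{0}$ with $\mathcal{M}(f_{0})=M$ the two-parameter mass-preserving scaling $f_{\lambda}(q,p)=\lambda^{3(a+b)}f_{0}(\lambda^{a}q,\lambda^{b}p)$. A direct computation shows the kinetic, Casimir, and potential terms scale like $\lambda^{-2b}$, $\lambda^{3(m-1)(a+b)}$ (in the model case $\beta(s)=s^{m}$, with the analogous refinement under \textbf{(A2)}-\textbf{(A3)} for general $\beta$), and $\lambda^{a}$ respectively. The assumption $m_{1},m_{2}>\tfrac{5}{3}$ is precisely what allows the choice $a=-b(3m-1)/(3(m-1))$ balancing the kinetic and Casimir exponents while letting the attractive potential exponent strictly dominate; sending $\lambda$ to an appropriate value then drives $\mathcal{J}(f_{\lambda})$ strictly below zero. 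Item $(ii)$ then follows from the Euler-Lagrange formula combined with $\tilde{\beta}\equiv 0$ on $(-\infty,0]$: the set $\{\mathcal{Q}>0\}$ requires $U(q)>\mu+\tfrac{|p|^{2}}{2}\geq\mu>0$, and since $U=\tfrac{1}{|\cdot|}*\rho_{\mathcal{Q}}$ tends to zero at infinity the region $\{U>\mu\}$ is bounded, which additionally confines $|p|^{2}\leq 2(\|U\|_{\infty}-\mu)$.

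For $(iii)$, admissible variations $\mathcal{Q}+\varepsilon h$ preserving mass and positivity, together with the strict convexity of $\beta$, produce the pointwise identity $\beta'(\mathcal{Q})+\tfrac{|p|^{2}}{2}-U=-\mu$ on $\{\mathcal{Q}>0\}$; multiplying by $\mathcal{Q}$ and integrating yields the displayed formula. The strict negativity $-\mu<0$ is then obtained by combining this identity with the virial-type relation $\tfrac{d}{d\lambda}\mathcal{J}(f_{\lambda})\big|_{\lambda=1}=0$ coming from the scaling of $(i)$, which pins down $V$ in terms of $T$ and the Casimir integral, and then invoking $\mathcal{J}_{\min}<0$. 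Item $(iv)$ is handled by Schwarz symmetrization in the $q$-variable, which is invariant for the kinetic and Casimir integrals while making the attractive potential energy more negative by Riesz, together with rearrangement in $p$ at fixed $q$, which decreases the kinetic integral by Hardy-Littlewood while preserving the Casimir integral; the equality cases in these rearrangement inequalities then force $\rho_{\mathcal{Q}}$, and hence $U$, to be radially symmetric up to translation.

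For $(v)$ I would rewrite the Euler-Lagrange identity as $\rho_{\mathcal{Q}}(q)=\int_{\mathbb{R}^{3}}\tilde{\beta}(U(q)-\mu-\tfrac{|p|^{2}}{2})\,dp=:G(U(q))$ with $G$ continuous, nonnegative, and vanishing on $(-\infty,\mu]$; the a priori bounds $\mathcal{Q}\in L^{m_{1}}$ (from \textbf{(A2)}) and $\int|p|^{2}\mathcal{Q}<\infty$ combined with Theorem \ref{interpolation inequality} yield integrability of $\rho_{\mathcal{Q}}$, and an elliptic bootstrap on $-\Delta U=4\pi G(U)$ then upgrades regularity to $U\in C^{2}_{\textup{loc}}$ and hence $\rho_{\mathcal{Q}}\in C^{1}$, with compact support inherited from $(ii)$; the $L^{3,\infty}$ bound combines local boundedness of $U$ with the decay $U(x)\lesssim M/|x|$ at infinity. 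For $(vi)$, in the polytropic case Gaussian integration in $p$ reduces the Euler-Lagrange equation to the Lane-Emden-type equation $-\Delta U=c\,(U-\mu)_{+}^{k+3/2}$ on $\mathbb{R}^{3}$ with $k=1/(m-1)\in(0,\tfrac{3}{2})$, so that $k+\tfrac{3}{2}$ is strictly subcritical; uniqueness up to translation then reduces to a classical ODE shooting argument for the resulting radial problem, as carried out in \cite{GuoRein1}. The step I expect to be most delicate is fixing the sign of the Lagrange multiplier in $(iii)$ for a generic $\beta$ satisfying only \textbf{(A1)}-\textbf{(A3)}, since the scaling identities that close the argument cleanly in the polytropic case lose their explicit form and must be replaced by a monotonicity comparison across mass levels.
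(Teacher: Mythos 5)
The paper does not prove Proposition \ref{prop-Q}; it is stated with the citation \cite{GuoRein1, Rein} and used as a black box, so there is no in-paper proof to compare against. Your reconstruction follows the same scheme as those references: a two-parameter mass-preserving scaling for $(i)$, the pointwise Euler--Lagrange relation for $(ii)$--$(iii)$, Riesz rearrangement for $(iv)$, an elliptic bootstrap on the semilinear Poisson equation $-\Delta U = 4\pi G(U)$ for $(v)$, and the Lane--Emden reduction for $(vi)$. The exponent bookkeeping in $(i)$ is correct, and the balance $a = -b\tfrac{3m-1}{3(m-1)}$ together with $m>\tfrac53$ (using \textbf{(A3)} to control the small-amplitude regime after rescaling) does let the attractive term dominate as $\lambda\to\infty$.

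The one step you flag as uncertain, the sign of $\mu$ in $(iii)$ for a generic $\beta$, in fact closes cleanly along the virial route you propose. Set $T := \tfrac12\int|p|^2\mathcal{Q}$, $V := \iint\tfrac{\rho_{\mathcal{Q}}(x)\rho_{\mathcal{Q}}(y)}{|x-y|}dxdy$, $C := \int\beta(\mathcal{Q})$, and $D := \int\big(\mathcal{Q}\beta'(\mathcal{Q})-\beta(\mathcal{Q})\big)$; by strict convexity with $\beta(0)=\beta'(0)=0$ one has $s\beta'(s)>\beta(s)$ for $s>0$, so $D>0$. The two mass-preserving one-parameter families $\lambda^3\mathcal{Q}(\lambda q,p)$ and $\sigma^3\mathcal{Q}(q,\sigma p)$ give, at $\lambda=\sigma=1$, the identities $V=6D$ and $T=\tfrac32 D$, hence $V=4T$; since $\mathcal{J}_{\min}=T-\tfrac12V+C=C-T<0$ one gets $C<T$, and then $-\mu M = T - V + D + C = -\tfrac72 D + C < -\tfrac72 D + \tfrac32 D = -2D < 0$. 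No polytropic structure and no ``monotonicity across mass levels'' is needed. (The route actually taken in \cite{Rein} is different and worth recording: one argues by contradiction from the mass constraint, since if $\mu\leq0$ the convexity bound $\tilde\beta(s)\gtrsim s^{1/(m_2-1)}$ forced by \textbf{(A3)} together with $U(q)\gtrsim M/|q|$ gives $\rho_{\mathcal{Q}}(q)\gtrsim|q|^{-(\frac32+\frac{1}{m_2-1})}$, whose exponent is $<3$ precisely because $m_2>\tfrac53$, so $\mathcal{M}(\mathcal{Q})=\infty$.)

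One caution about $(iv)$: Schwarz symmetrization of $\mathcal{Q}$ in $q$ at fixed $p$ does \emph{not} produce $\rho_{\mathcal{Q}}^*$ as its density, so you cannot invoke Riesz directly on $\rho$. The correct use is slicewise: apply Riesz to $\iint\frac{\mathcal{Q}(x,p)\mathcal{Q}(y,p')}{|x-y|}dx\,dy$ for each pair $(p,p')$ and integrate in $(p,p')$. This shows the $q$-rearranged state has larger potential energy with kinetic and Casimir unchanged, and the strict-equality case of Riesz (valid since $|x-y|^{-1}$ is strictly symmetric-decreasing) yields a \emph{common} translate across $p$, from which $\rho_{\mathcal{Q}}$ and $U$ are radially symmetric up to translation. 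The additional rearrangement in $p$ you mention is not needed for the stated conclusion of $(iv)$.
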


Now, we introduce an auxiliary quantum state by
\begin{equation}\label{QS from classical minimizer}
\gamma_\hbar=\tilde{\beta}\big(-\mu-(-\tfrac{\hbar^2}{2}\Delta-U)\big)
\end{equation}
with the same potential function $-U$ in Proposition \ref{prop-Q}. Indeed, it is a natural quantization of $\mathcal{Q}$ in view of the quantum-classical correspondence $p\leftrightarrow -i\hbar\nabla$. By construction, $\gamma_\hbar$ is nonnegative, compact and self-adjoint.

The main result of this section asserts that the quantum state $\gamma_\hbar$ asymptotically admits the mass constraint, and that its free energy (i.e., the value of the energy-Casimir functional) converges to the classical minimum free energy.

\begin{proposition}[Quantum states from a classical minimizer]\label{prop: CtoQ}
Suppose that $\beta$ satisfies \textup{\textbf{(A1)}$-$\textbf{(A3)}}. The quantum state $\gamma_\hbar$, given by \eqref{QS from classical minimizer}, obeys
$$\lim_{\hbar\to 0}\mathcal{J}^\hbar(\gamma_\hbar)=\mathcal{J}_{\min}\quad\textup{and}\quad \lim_{\hbar\to 0}\mathcal{M}^\hbar(\gamma_\hbar)=M.$$
As a consequence, we have
\begin{equation}\label{min free energy 1}
\limsup_{\hbar\to0}\mathcal{J}_{\min}^\hbar\leq\mathcal{J}_{\min}<0.
\end{equation}
\end{proposition}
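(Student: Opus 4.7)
The plan is to decompose $\mathcal{J}^\hbar(\gamma_\hbar) = \mathcal{E}^\hbar(\gamma_\hbar) + \mathcal{C}^\hbar(\gamma_\hbar)$ and attack the mass $\mathcal{M}^\hbar(\gamma_\hbar)$, the Casimir $\mathcal{C}^\hbar(\gamma_\hbar)$, and the trace $\text{Tr}^\hbar(\hat{H}_\hbar \gamma_\hbar)$ with Weyl's law, while handling the Hartree self-interaction $\iint \tfrac{\rho^\hbar_{\gamma_\hbar}(x)\rho^\hbar_{\gamma_\hbar}(y)}{|x-y|}dxdy$ and the coupling $\int U \rho^\hbar_{\gamma_\hbar}$ with the T\"oplitz approximation. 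By Proposition \ref{prop-Q}, $U = \tfrac{1}{|x|}*\rho_\mathcal{Q}$ is globally bounded, lies in $L^r(\R^3)$ for every $r>3$ (since $U \sim 1/|x|$ at infinity while $\rho_\mathcal{Q}$ has compact support), and is globally H\"older continuous; in particular $U \in C^{0,1/5}$. Thus Theorem \ref{Weyl's law} applies with $V=-U$ and Proposition \ref{Toplitz quant convergence} applies with $V_\hbar \equiv -U$.

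For the Weyl part, note that $\mathcal{M}^\hbar(\gamma_\hbar) = \text{Tr}^\hbar(G_M(\hat{H}_\hbar))$, $\mathcal{C}^\hbar(\gamma_\hbar) = \text{Tr}^\hbar(G_C(\hat{H}_\hbar))$, and $\text{Tr}^\hbar(\hat{H}_\hbar \gamma_\hbar) = -\text{Tr}^\hbar(G_K(\hat{H}_\hbar))$, where $G_M(s)=\tilde\beta(-\mu-s)$, $G_C(s)=\beta(\tilde\beta(-\mu-s))$, and $G_K(s)=-s\tilde\beta(-\mu-s)$ are continuous, non-negative, and supported in $(-\infty,-\mu]$ thanks to $\mu > 0$ (Proposition \ref{prop-Q}(iii)). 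A layer-cake representation converts each trace into an integral of eigenvalue counts; for instance,
\[
\text{Tr}^\hbar(G_M(\hat{H}_\hbar)) = \int_0^\infty (2\pi\hbar)^3 N(-\mu-\beta'(t),\hbar,-U)\, dt.
\]
Weyl's law gives pointwise-in-$t$ convergence of the integrand to the classical phase-space volume, while the CLR bound (Lemma \ref{CLR bound}) supplies a uniform integrable majorant: near $t=0$ the gap $\mu > 0$ keeps $\mu+\beta'(t)$ bounded away from zero, and at large $t$ assumption (A2) together with the freedom to take $r>3$ in $U \in L^r$ makes the majorant decay strictly faster than $t^{-1}$ (indeed $(m_1-1)(r-3/2) > 1$ for $m_1 > 5/3$ and $r>3$). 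Dominated convergence then yields
\[
\text{Tr}^\hbar(G(\hat{H}_\hbar)) \longrightarrow \int_{\R^6} G\big(\tfrac{|p|^2}{2}-U(q)\big)\,dq\,dp
\]
for each $G \in \{G_M, G_C, G_K\}$, giving $\mathcal{M}^\hbar(\gamma_\hbar)\to M$, $\mathcal{C}^\hbar(\gamma_\hbar) \to \mathcal{C}(\mathcal{Q})$, and $\text{Tr}^\hbar(\hat{H}_\hbar \gamma_\hbar) \to \int_{\R^6}\big(\tfrac{|p|^2}{2}-U(q)\big)\mathcal{Q}\,dq\,dp$.

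For the T\"oplitz part, the identity $-\tfrac{\hbar^2}{2}\Delta = \hat{H}_\hbar + U$ gives $\text{Tr}^\hbar(-\tfrac{\hbar^2}{2}\Delta\, \gamma_\hbar) = \text{Tr}^\hbar(\hat{H}_\hbar \gamma_\hbar) + \int U \rho_{\gamma_\hbar}^\hbar$, so it remains to prove $\int U\rho_{\gamma_\hbar}^\hbar \to \int U\rho_\mathcal{Q}$ and $\iint \tfrac{\rho_{\gamma_\hbar}^\hbar(x)\rho_{\gamma_\hbar}^\hbar(y)}{|x-y|}dxdy \to \iint \tfrac{\rho_\mathcal{Q}(x)\rho_\mathcal{Q}(y)}{|x-y|}dxdy$. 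Insert $\text{Op}^T_\hbar[\mathcal{Q}]$ as an intermediate state: Proposition \ref{Toplitz quant convergence} (applied with $\alpha = \tilde\beta$) gives $\|\gamma_\hbar - \text{Op}^T_\hbar[\mathcal{Q}]\|_{\mathcal{B}(L^2)} \to 0$, and the explicit Gaussian-convolution formula $\rho^\hbar_{\text{Op}^T_\hbar[\mathcal{Q}]} = G_\hbar * \rho_\mathcal{Q}$ gives convergence of this reference density to $\rho_\mathcal{Q}$ in every $L^p(\R^3)$, since $\rho_\mathcal{Q} \in C^1_c(\R^3)$. The semi-classical Lieb-Thirring inequality \eqref{endpoint semi-classical LT} combined with the uniform kinetic bound from the Weyl step yields uniform $L^{5/3}$ control on $\rho_{\gamma_\hbar}^\hbar$; interpolation with the $L^1$ bound from the converging mass gives uniform $L^{6/5}$ control, and I upgrade the operator-norm closeness to density convergence $\rho_{\gamma_\hbar}^\hbar \to \rho_\mathcal{Q}$ in $L^{6/5}$. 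Hardy-Littlewood-Sobolev then closes both required convergences, producing $\mathcal{E}^\hbar(\gamma_\hbar) \to \mathcal{E}(\mathcal{Q})$ and hence $\mathcal{J}^\hbar(\gamma_\hbar) \to \mathcal{J}_{\min}$. The consequence \eqref{min free energy 1} follows by rescaling: $\tfrac{M}{\mathcal{M}^\hbar(\gamma_\hbar)} \gamma_\hbar$ lies exactly in $\mathcal{A}^\hbar_M$ for small $\hbar$, and its free energy still tends to $\mathcal{J}_{\min}$ since the scaling factor tends to $1$.

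The main obstacle is the density-level upgrade in the T\"oplitz step: both $\gamma_\hbar$ and $\text{Op}^T_\hbar[\mathcal{Q}]$ have trace of order $\hbar^{-3}$, so operator-norm smallness of the difference is far too weak on its own. The argument must exploit that both operators are essentially supported on the same spectral subspace of $\hat{H}_\hbar$ of dimension $\sim \hbar^{-3}$, and the positivity requirement in Lieb-Thirring seems to force a positive/negative decomposition of the difference together with separate kinetic-energy bounds on each piece. A cleaner alternative would be to establish directly a weighted Weyl asymptotic $\text{Tr}^\hbar(\psi\, g(\hat{H}_\hbar)) \to \iint \psi(q)\, g\big(\tfrac{|p|^2}{2}+V(q)\big)\,dq\,dp$ and apply it to $\psi = U$ and to $\psi = \tfrac{1}{|x|}*\rho_\mathcal{Q}$, but justifying such an extension in the present low-regularity setting for $\beta$ appears to require essentially the same type of analysis.
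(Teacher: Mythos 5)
Your proof follows essentially the same route as the paper's (Lemma~\ref{CtoQ Casimir}, Corollary~\ref{CtoQ Casimir cor}, Lemma~\ref{CtoQ potential energy}, and the proof of Proposition~\ref{prop: CtoQ}): Weyl's law via layer-cake with the CLR majorant for the traces $\mathcal{M}^\hbar(\gamma_\hbar)$, $\mathcal{C}^\hbar(\gamma_\hbar)$, $\textup{Tr}^\hbar(\hat H_\hbar\gamma_\hbar)$, then the T\"oplitz bridge $\textup{Op}^T_\hbar[\mathcal{Q}]$ and the endpoint Lieb--Thirring inequality for the Coulomb integrals. Two remarks are worth recording. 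First, your rescaling $\gamma_\hbar\mapsto c_\hbar\gamma_\hbar$ with $c_\hbar=M/\mathcal{M}^\hbar(\gamma_\hbar)$, to produce an admissible competitor in $\mathcal{A}^\hbar_M$, is a legitimate point the paper leaves implicit in deriving~\eqref{min free energy 1}; to close it one also needs $\textup{Tr}^\hbar\beta(c_\hbar\gamma_\hbar)-\textup{Tr}^\hbar\beta(\gamma_\hbar)\to 0$, which follows since $|\beta(c_\hbar\lambda)-\beta(\lambda)|\lesssim |c_\hbar-1|\,\lambda$ uniformly on $\bigl[0,\tilde\beta(\|U\|_{L^\infty})\bigr]$ and $\mathcal{M}^\hbar(\gamma_\hbar)\to M$. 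Second, the obstacle you flag at the end is genuine and not merely your difficulty: the paper's Lemma~\ref{CtoQ potential energy} applies the endpoint Lieb--Thirring inequality (Theorem~\ref{LT inequality}, stated for $\gamma\ge 0$) directly to the signed operator $\gamma_\hbar-\textup{Op}^T_\hbar[\mathcal{Q}]$; the usual signed extension replaces the right-hand side with $\textup{Tr}\bigl(\sqrt{-\Delta}\,\bigl|\gamma_\hbar-\textup{Op}^T_\hbar[\mathcal{Q}]\bigr|\,\sqrt{-\Delta}\bigr)$, and bounding this by the separate kinetic energies of $\gamma_\hbar$ and $\textup{Op}^T_\hbar[\mathcal{Q}]$ does not follow from the naive operator inequality $|A-B|\le A+B$, which is false. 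Your instinct to perform a positive/negative decomposition of the difference and bound each piece's kinetic energy is the right direction, and the point deserves the additional care you are asking for.
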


The proof of the proposition is divided into two parts. For the first part, using Weyl's law (Theorem \ref{Weyl's law}), we prove convergence of the mass and the Casimir functionals. 

\begin{lemma}\label{CtoQ Casimir}
\begin{equation}\label{CtoQ Casimir application1}
\begin{aligned}
\lim_{\hbar\to0}\mathcal{M}^\hbar(\gamma_\hbar)=M&&\textup{(mass)},\quad\lim_{\hbar\to 0}\mathcal{C}^\hbar(\gamma_\hbar)=\mathcal{C}(\mathcal{Q})&&\textup{(Casimir)},
\end{aligned}
\end{equation}
and
\begin{equation}\label{CtoQ Casimir application2}
\lim_{\hbar\to 0}\textup{Tr}^\hbar\big((-\tfrac{\hbar^2}{2}\Delta-U)\gamma_\hbar\big)=\int_{\mathbb{R}^6}\big(\tfrac{|p|^2}{2}-U(q)\big)\mathcal{Q}(q,p)dqdp.
\end{equation}
\end{lemma}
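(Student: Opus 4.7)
The plan is to reduce all three statements to a single master identity: for any continuous strictly increasing function $\Phi:[0,\infty)\to[0,\infty)$ with $\Phi(0)=0$ (extended by $0$ on the negatives), evaluate
$$\lim_{\hbar\to0}\textup{Tr}^\hbar\bigl(\Phi(-\mu-\hat H_\hbar)\bigr),\qquad \hat H_\hbar:=-\tfrac{\hbar^2}{2}\Delta-U.$$
Writing $\{E_{\hbar,j}\}$ for the negative eigenvalues of $\hat H_\hbar$ and $\Phi^{-1}$ for the inverse, a layer-cake representation gives
$$\textup{Tr}^\hbar\bigl(\Phi(-\mu-\hat H_\hbar)\bigr)=(2\pi\hbar)^3\sum_j\Phi(-\mu-E_{\hbar,j})=\int_0^\infty (2\pi\hbar)^3 N\bigl(-\mu-\Phi^{-1}(\tau),\hbar,-U\bigr)\,d\tau.$$
For each $\tau>0$ the cutoff $\mu+\Phi^{-1}(\tau)$ is strictly positive, and by Proposition~\ref{prop-Q}(v) the potential $U=\tfrac{1}{|x|}*\rho_{\mathcal Q}$ is bounded and decays like $|x|^{-1}$ at infinity, so $-U\in L^r(\mathbb{R}^3)$ for every $r>3$. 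Theorem~\ref{Weyl's law} then yields pointwise convergence of the integrand to $\bigl|\{(q,p):\tfrac{|p|^2}{2}-U(q)\le-\mu-\Phi^{-1}(\tau)\}\bigr|$, and a second layer-cake on the classical side identifies its $\tau$-integral with $\int_{\mathbb{R}^6}\Phi\bigl(-\mu-(\tfrac{|p|^2}{2}-U(q))\bigr)\,dqdp$.

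To interchange the limit with the $\tau$-integration I will invoke dominated convergence, using Lemma~\ref{CLR bound} as the source of the uniform-in-$\hbar$ majorant
$$(2\pi\hbar)^3 N\bigl(-\mu-\Phi^{-1}(\tau),\hbar,-U\bigr)\;\lesssim\;\|U\|_{L^r}^r\bigl(\mu+\Phi^{-1}(\tau)\bigr)^{-(r-3/2)},$$
which is finite near $\tau=0$ thanks to the gap $\mu>0$. At infinity, assumption \textbf{(A2)} forces $\beta'(s)\gtrsim s^{m_1-1}$ for large $s$, from which $\Phi^{-1}$ inherits polynomial growth for each of the three choices below; I will then select $r$ large enough that the majorant is integrable, obtaining the master identity
$$\lim_{\hbar\to0}\textup{Tr}^\hbar\bigl(\Phi(-\mu-\hat H_\hbar)\bigr)=\int_{\mathbb{R}^6}\Phi\bigl(-\mu-(\tfrac{|p|^2}{2}-U(q))\bigr)\,dqdp.$$

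The three claims follow by specialization. Taking $\Phi=\tilde\beta$ yields $\lim\mathcal M^\hbar(\gamma_\hbar)=\int\mathcal Q\,dqdp=M$; taking $\Phi=\beta\circ\tilde\beta$ yields $\lim\mathcal C^\hbar(\gamma_\hbar)=\int\beta(\mathcal Q)\,dqdp=\mathcal C(\mathcal Q)$. For \eqref{CtoQ Casimir application2} I split $\hat H_\hbar=-\mu-(-\mu-\hat H_\hbar)$ so that
$$\textup{Tr}^\hbar(\hat H_\hbar\gamma_\hbar)=-\mu\,\mathcal M^\hbar(\gamma_\hbar)-\textup{Tr}^\hbar\bigl(G(-\mu-\hat H_\hbar)\bigr),\qquad G(s):=s\,\tilde\beta(s),$$
apply the master identity with $\Phi=G$, and cancel the $-\mu M$ contributions against the mass limit to recover $\int(\tfrac{|p|^2}{2}-U(q))\,\mathcal Q\,dqdp$ on the right.

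The main obstacle will be the dominated-convergence step, which is tightest for $\Phi=G$ (and equally for $\Phi=\beta\circ\tilde\beta$): assumption \textbf{(A2)} only guarantees $\Phi^{-1}(\tau)\gtrsim\tau^{(m_1-1)/m_1}$ at infinity, so integrability of the majorant demands $r>\tfrac32+\tfrac{m_1}{m_1-1}$. Because $m_1>\tfrac53$ the right-hand side is strictly below $4$, leaving enough room to pick some $r>4$ that is simultaneously compatible with $U\in L^r$; this is precisely where the strict inequality in \textbf{(A2)} enters. A secondary technical point is that the master identity requires Weyl's law for the long-range potential $-U\notin L^{3/2}$, which is precisely the reason the extended version Theorem~\ref{Weyl's law} was proved in Section~4.
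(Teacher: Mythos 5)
Your proof is correct and follows essentially the same route as the paper: layer-cake the trace into an integral of eigenvalue-counting functions, invoke the long-range Weyl's law (Theorem~\ref{Weyl's law}) pointwise, and reverse the layer cake, then specialize to $\Phi=\tilde\beta$, $\beta\circ\tilde\beta$, and $s\tilde\beta(s)$. The one place you go beyond the paper's write-up is the dominated-convergence justification for interchanging $\hbar\to 0$ with the $\tau$-integral: the paper passes the limit inside silently, whereas you supply a uniform-in-$\hbar$ majorant via the CLR bound (Lemma~\ref{CLR bound}), check integrability near $\tau=0$ using the spectral gap $\mu>0$, and check integrability at $\tau=\infty$ from the lower bound $\Phi^{-1}(\tau)\gtrsim\tau^{(m_1-1)/m_1}$ (which follows from convexity and \textbf{(A2)}, exactly as in the proof of \eqref{beta' lower bound}), observing that $3/2+m_1/(m_1-1)<4$ leaves room for an admissible $r>3$ with $U\in L^r$; this is a genuine and worthwhile addition to the argument rather than a different approach.
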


\begin{proof}
It suffices to show that 
\begin{equation}\label{CtoQ Casimir proof}
\lim_{\hbar\to0}\textup{Tr}^\hbar\Big(\alpha\big(-\mu-(-\tfrac{\hbar^2}{2}\Delta-U)\big)\Big)=\int_{\mathbb{R}^6}\alpha\big(-\mu-(\tfrac{|p|^2}{2}-U(q))\big)dqdp
\end{equation}
for any strictly increasing $\alpha \in C(\mathbb{R})$ such that $\alpha(s) = 0$ for $s\leq 0$. Indeed, if it is proved, we may set $\alpha=\tilde{\beta}$ for the mass, $\alpha=\beta\circ \tilde{\beta}$ for the Casimir functional, or $\alpha=s\tilde{\beta}$ for \eqref{CtoQ Casimir application2}.

For the proof of \eqref{CtoQ Casimir proof}, we use the layer cake representation to write
$$\textup{Tr}\Big(\alpha\big(-\mu-(-\tfrac{\hbar^2}{2}\Delta-U)\big)\Big)=\sum_j\alpha(\lambda_j)=\int_0^\infty \#\{j; \lambda_j>\lambda\} d\nu(\lambda),$$
where $\{\lambda_j\}$ denotes the set of the non-negative eigenvalues of $-\mu-(-\tfrac{\hbar^2}{2}\Delta-U)$, $\# A$ is the total number of elements of a set $A$, and $\nu$ is a measure on the Borel sets of the positive real-line $[0,\infty)$ such that $\alpha(t)=\nu([0,t))$. Note that $(-\mu-(-\tfrac{\hbar^2}{2}\Delta-U))\phi=\lambda_j\phi$ with $\lambda_j>\lambda$ if and only if $(-\tfrac{\hbar^2}{2}\Delta-U)\phi=-(\mu+\lambda_j)\phi$ with $-(\mu+\lambda_j)< -(\mu+\lambda)$. Hence, it follows that
$$\textup{Tr}^\hbar\Big(\alpha\big(-\mu-(-\tfrac{\hbar^2}{2}\Delta-U)\big)\Big)=\int_0^\infty N\big(-(\mu+\lambda),\hbar, -U\big)d\nu(\lambda),$$
where $N(-E,\hbar, V)$ denotes the number of eigenvalues of the Schr\"odinger operator $-\tfrac{\hbar^2}{2}\Delta+V$ less than $-E$. Consequently, Weyl's law (Theorem \ref{Weyl's law}) yields 
$$\begin{aligned}
\lim_{\hbar\to0}\textup{Tr}\Big(\alpha\big(-\mu-(-\tfrac{\hbar^2}{2}\Delta-U)\big)\Big)&=\int_0^\infty \left|\left\{(q,p):\ \tfrac{|p|^2}{2}-U(q)< -(\mu+\lambda) \right\}\right|d\nu(\lambda)\\
&=\int_0^\infty \left|\left\{(q,p):\ -\mu-(\tfrac{|p|^2}{2}-U(q))> \lambda \right\}\right|d\nu(\lambda).
\end{aligned}$$
Therefore, applying the layer cake representation again but backwardly, we obtain the right hand side of \eqref{CtoQ Casimir proof}.
\end{proof}

As a corollary, we have the following bounds.

\begin{corollary}\label{CtoQ Casimir cor}
$$\sup_{\hbar\in(0,1]}\textup{Tr}^\hbar\big((-\hbar^2\Delta)\gamma_\hbar\big)<\infty\quad\textup{and}\quad\sup_{\hbar\in(0,1]}\|\rho_{\gamma_\hbar}^\hbar\|_{L^1(\mathbb{R}^3)\cap L^{\frac{5}{3}}(\mathbb{R}^3)}<\infty.$$
\end{corollary}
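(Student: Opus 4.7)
The plan is to show both bounds by leveraging the three limits in Lemma \ref{CtoQ Casimir}, together with the boundedness of $U$ from Proposition \ref{prop-Q}(v) and the endpoint Lieb-Thirring inequality \eqref{endpoint semi-classical LT}. Since convergent sequences are bounded, Lemma \ref{CtoQ Casimir} immediately gives
\[
\sup_{\hbar\in(0,1]}\mathcal{M}^\hbar(\gamma_\hbar)<\infty,\qquad
\sup_{\hbar\in(0,1]}\bigl|\textup{Tr}^\hbar\bigl((-\tfrac{\hbar^2}{2}\Delta-U)\gamma_\hbar\bigr)\bigr|<\infty,
\]
and the first gives $\sup_\hbar\|\rho^\hbar_{\gamma_\hbar}\|_{L^1}<\infty$ since $\|\rho^\hbar_{\gamma_\hbar}\|_{L^1}=\mathcal{M}^\hbar(\gamma_\hbar)$.

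Next, I would split the kinetic part from the potential part by writing
\[
\textup{Tr}^\hbar\bigl((-\tfrac{\hbar^2}{2}\Delta)\gamma_\hbar\bigr)
=\textup{Tr}^\hbar\bigl((-\tfrac{\hbar^2}{2}\Delta-U)\gamma_\hbar\bigr)+\int_{\mathbb{R}^3}U\rho^\hbar_{\gamma_\hbar}\,dx,
\]
and bound the potential term by $\|U\|_{L^\infty}\|\rho^\hbar_{\gamma_\hbar}\|_{L^1}$. Since $U\in L^\infty$ by Proposition \ref{prop-Q}(v) and the $L^1$ norm is already controlled, this yields a uniform bound on $\textup{Tr}^\hbar((-\hbar^2\Delta)\gamma_\hbar)$.

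To obtain the $L^{5/3}$ bound, I would apply the endpoint Lieb-Thirring inequality \eqref{endpoint semi-classical LT}:
\[
\|\rho^\hbar_{\gamma_\hbar}\|_{L^{5/3}(\mathbb{R}^3)}\lesssim\|\gamma_\hbar\|^{2/5}\bigl\{\textup{Tr}^\hbar\bigl((-\hbar^2\Delta)\gamma_\hbar\bigr)\bigr\}^{3/5}.
\]
Thus it remains to bound the operator norm $\|\gamma_\hbar\|$ uniformly in $\hbar$. Since $\gamma_\hbar=\tilde{\beta}(-\mu-(-\tfrac{\hbar^2}{2}\Delta-U))$ is defined by functional calculus and $\tilde{\beta}$ is nondecreasing with $\tilde{\beta}(s)=0$ for $s\leq 0$, the operator norm equals $\tilde{\beta}$ evaluated at the supremum of the spectrum of $-\mu-(-\tfrac{\hbar^2}{2}\Delta-U)=-\mu+\tfrac{\hbar^2}{2}\Delta+U$. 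Because $\tfrac{\hbar^2}{2}\Delta\leq 0$ as a quadratic form, this spectrum is bounded above by $-\mu+\|U\|_{L^\infty}$, a constant independent of $\hbar$. Therefore $\|\gamma_\hbar\|\leq\tilde{\beta}(-\mu+\|U\|_{L^\infty})$ uniformly, and the Lieb-Thirring bound closes the argument.

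The only subtlety is the operator-norm control of $\gamma_\hbar$, but this is not really an obstacle; it is just a spectral observation that $-\tfrac{\hbar^2}{2}\Delta-U\geq -\|U\|_{L^\infty}$ as a quadratic form and that $\tilde{\beta}$ is monotone. Once that is noted, everything reduces to Lemma \ref{CtoQ Casimir}, the $L^\infty$ bound on $U$ from Proposition \ref{prop-Q}(v), and one application of the endpoint Lieb-Thirring inequality, all of which are already available.
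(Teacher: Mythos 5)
Your proof is correct and follows essentially the same route as the paper's: decompose the kinetic energy as $\textup{Tr}^\hbar\bigl((-\tfrac{\hbar^2}{2}\Delta-U)\gamma_\hbar\bigr)+\textup{Tr}^\hbar(U\gamma_\hbar)$, bound the second term by $\|U\|_{L^\infty}\textup{Tr}^\hbar(\gamma_\hbar)$, invoke Lemma~\ref{CtoQ Casimir} for the two convergent (hence bounded) quantities, and then apply the endpoint Lieb--Thirring inequality \eqref{endpoint semi-classical LT} together with the operator-norm bound $\|\gamma_\hbar\|\leq\tilde{\beta}(-\mu+\|U\|_{L^\infty})$. Your spectral argument for the operator norm is made slightly more explicit than in the paper (which simply states $\|\gamma_\hbar\|\leq\tilde{\beta}(\|U\|_{L^\infty})$, a marginally cruder but equivalent-for-present-purposes bound), but the logic is identical.
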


\begin{proof}
$\textup{Tr}^\hbar(-\hbar^2\Delta\gamma_\hbar)$ is uniformly bounded, because by \eqref{CtoQ Casimir application1} and \eqref{CtoQ Casimir application2},
$$\begin{aligned}
\textup{Tr}^\hbar\big((-\tfrac{\hbar^2}{2}\Delta)\gamma_\hbar\big)&=\textup{Tr}^\hbar\big((-\tfrac{\hbar^2}{2}\Delta-U)\gamma_\hbar\big)+\textup{Tr}^\hbar(U\gamma_\hbar)\\
&\leq \textup{Tr}^\hbar\big((-\tfrac{\hbar^2}{2}\Delta-U)\gamma_\hbar\big)+\|U\|_{L^\infty}\textup{Tr}^\hbar(\gamma_\hbar)\\
&= \int_{\mathbb{R}^6} \big(\tfrac{|p|^2}{2}-U(q)\big)\mathcal{Q}(q,p)dqdp+\|U\|_{L^\infty}M +o(\hbar).
\end{aligned}$$
The bound for the density function $\rho_{\gamma_\hbar}^\hbar$ follows from the Lieb-Thirring inequality (Theorem \ref{LT inequality}) and the fact that $\|\gamma_\hbar\| \leq \tilde{\beta}(\|U\|_{L^\infty})$.
\end{proof}

Secondly, we employ Proposition \ref{Toplitz quant convergence} to prove convergence of the potential energy.
\begin{lemma}\label{CtoQ potential energy}
\begin{equation}\label{eq: CtoQ potential energy1}
\lim_{\hbar\to0}\int_{\mathbb{R}^6}\frac{\rho_{\gamma_\hbar}^\hbar(x)\rho_{\gamma_\hbar}^\hbar(y)}{|x-y|}dxdy=\int_{\mathbb{R}^6}\frac{\rho_{\mathcal{Q}}(x)\rho_{\mathcal{Q}}(y)}{|x-y|}dxdy
\end{equation}
and
\begin{equation}\label{eq: CtoQ potential energy2}
\lim_{\hbar\to0}\int_{\mathbb{R}^6}\frac{\rho_{\gamma_\hbar}^\hbar(x)\rho_{\mathcal{Q}}(y)}{|x-y|}dxdy=\int_{\mathbb{R}^6}\frac{\rho_{\mathcal{Q}}(x)\rho_{\mathcal{Q}}(y)}{|x-y|}dxdy.
\end{equation}
\end{lemma}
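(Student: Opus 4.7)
The plan is to reduce both claims \eqref{eq: CtoQ potential energy1} and \eqref{eq: CtoQ potential energy2} to the single strong convergence
\[
\rho_{\gamma_\hbar}^\hbar\longrightarrow\rho_{\mathcal Q}\quad\text{in }L^{6/5}(\R^3).
\]
Once this is in hand, the Hardy-Littlewood-Sobolev inequality makes the Hartree form $\mathcal D(\mu,\nu):=\int\int\mu(x)\nu(y)|x-y|^{-1}\,dxdy$ continuous on $L^{6/5}\times L^{6/5}$. Both \eqref{eq: CtoQ potential energy1} and \eqref{eq: CtoQ potential energy2} then follow from the bilinear identities $\mathcal D(\rho_{\gamma_\hbar}^\hbar,\rho_{\gamma_\hbar}^\hbar)-\mathcal D(\rho_{\mathcal Q},\rho_{\mathcal Q})=\mathcal D(\rho_{\gamma_\hbar}^\hbar-\rho_{\mathcal Q},\rho_{\gamma_\hbar}^\hbar+\rho_{\mathcal Q})$ and $\mathcal D(\rho_{\gamma_\hbar}^\hbar,\rho_{\mathcal Q})-\mathcal D(\rho_{\mathcal Q},\rho_{\mathcal Q})=\mathcal D(\rho_{\gamma_\hbar}^\hbar-\rho_{\mathcal Q},\rho_{\mathcal Q})$, together with the uniform $L^{6/5}$ bound on $\rho_{\gamma_\hbar}^\hbar$ from Corollary \ref{CtoQ Casimir cor} (via interpolation between its $L^1$ and $L^{5/3}$ bounds).

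To prove the density convergence, I would first apply Proposition \ref{Toplitz quant convergence} with $\mu_\hbar\equiv\mu$, $V_\hbar\equiv -U$ and $\alpha=\tilde\beta$. The hypothesis $U\in C^{0,1/5}(\R^3)$ holds by Proposition \ref{prop-Q}$(v)$, and $\tilde\beta$ is non-negative continuous with $\tilde\beta(t)=0$ for $t\le 0$ (by $\beta'(0)=0$ in \textbf{(A1)}); the proposition then yields the operator-norm convergence
\[
\|\gamma_\hbar-\textup{Op}^T_\hbar[\mathcal Q]\|_{\mathcal B(L^2)}\longrightarrow 0.
\]
By the standard T\"oplitz density formula from Appendix A, $\rho^\hbar_{\textup{Op}^T_\hbar[\mathcal Q]}=g_\hbar\ast\rho_{\mathcal Q}$ with the Gaussian $g_\hbar(x)=(\pi\hbar)^{-3/2}e^{-|x|^2/\hbar}$, and since $\rho_{\mathcal Q}\in C^1_c(\R^3)$ by Proposition \ref{prop-Q}$(v)$, this converges to $\rho_{\mathcal Q}$ strongly in every $L^p$. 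It therefore suffices to show $\rho_{\gamma_\hbar}^\hbar-g_\hbar\ast\rho_{\mathcal Q}\to 0$ in $L^{6/5}(\R^3)$.

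For this final step I would combine three ingredients. First, vague weak convergence: for $b\in C_c(\R^3)$ the pairing $\int b(\rho_{\gamma_\hbar}^\hbar-g_\hbar\ast\rho_{\mathcal Q})\,dx$ equals $\textup{Tr}^\hbar\bigl(b(\gamma_\hbar-\textup{Op}^T_\hbar[\mathcal Q])\bigr)$, which I would bound by combining the operator-norm smallness above with the semi-classical T\"oplitz trace estimate $(2\pi\hbar)^3\|\textup{Op}^T_\hbar[\tilde b]\|_{\mathfrak S^1}\lesssim\|\tilde b\|_{L^1(\R^6)}$, after lifting $b(q)$ to a phase-space symbol $\tilde b(q,p)\in L^1(\R^6)$ whose T\"oplitz operator approximates $b$ on the spectral range of $\hat H_\hbar=-\tfrac{\hbar^2}{2}\Delta-U$ below $-\mu$. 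Second, tightness: Agmon-type exponential decay for the bound states of $\hat H_\hbar$ with eigenvalues below $-\mu<0$ forces $\rho_{\gamma_\hbar}^\hbar$ to concentrate in the bounded region $\{U\ge\mu\}$ (bounded because $U\to 0$ at infinity), giving uniform-in-$\hbar$ control of $\int_{|x|>R}\rho_{\gamma_\hbar}^\hbar\,dx$ for large $R$. Third, the uniform $L^1\cap L^{5/3}$ bound from Corollary \ref{CtoQ Casimir cor} and the $L^1$-mass convergence from \eqref{CtoQ Casimir application1} combine with the previous two steps, via a Vitali-type equi-integrability argument, to upgrade vague weak convergence to strong $L^r$-convergence for every $r\in[1,5/3)$, in particular $L^{6/5}$.

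The main obstacle is the vague-weak convergence in the first ingredient. Operator-norm smallness of $\gamma_\hbar-\textup{Op}^T_\hbar[\mathcal Q]$ alone cannot control the density difference, because the unnormalised trace norm of each operator is of order $O(\hbar^{-3})$; the naive estimate $|\textup{Tr}(bT)|\le\|b\|_{L^\infty}\|T\|_{\mathfrak S^1}$ only yields $O(1)$. The key trick is to convert the multiplication operator $b$ into a T\"oplitz operator with an $L^1(\R^6)$ symbol, whose semi-classical trace norm \emph{is} controlled, and to absorb the discrepancy into an error that is negligible on the (compact-energy) spectral subspace of $\hat H_\hbar$ below $-\mu$. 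Handling this replacement rigorously, together with the Agmon tightness, is the technical heart of the lemma.
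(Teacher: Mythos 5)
Your reduction to strong $L^{6/5}$-convergence of $\rho_{\gamma_\hbar}^\hbar$ to $\rho_{\mathcal Q}$, and your use of Proposition \ref{Toplitz quant convergence} together with $\rho^\hbar_{\textup{Op}^T_\hbar[\mathcal Q]}\to\rho_{\mathcal Q}$, are all on target and match the paper. The gap is in the step you yourself flag as ``the technical heart'': converting operator-norm smallness of $\gamma_\hbar-\textup{Op}^T_\hbar[\mathcal Q]$ into density control. You declare this impossible by bare operator-norm considerations and then propose a heavy workaround (T\"oplitz operators with $L^1$ phase-space symbols to approximate a multiplication operator, Agmon-type decay for the spectral projectors of $\hat H_\hbar$, and a Vitali/equi-integrability argument to upgrade vague weak convergence to strong $L^r$). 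None of these steps is carried out, and as sketched the last one does not close: tightness plus a uniform $L^{5/3}$ bound plus weak convergence against $C_c$ gives only weak-$*$ convergence, not strong $L^{6/5}$ convergence --- Vitali requires pointwise a.e.\ convergence or some compactness that you do not provide.

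The tool you dismissed is exactly what the paper uses. The endpoint semi-classical Lieb--Thirring inequality \eqref{endpoint semi-classical LT} applied to the \emph{difference} $\gamma_\hbar-\textup{Op}^T_\hbar[\mathcal Q]$ gives
\[
\|\rho_{\gamma_\hbar}^\hbar-\rho_{\textup{Op}^T_\hbar[\mathcal{Q}]}^\hbar\|_{L^{\frac{5}{3}}}
\lesssim\|\gamma_\hbar-\textup{Op}^T_\hbar[\mathcal{Q}]\|^{\frac{2}{5}}
\Big\{(2\pi\hbar)^3\big\||\nabla|(\gamma_\hbar-\textup{Op}^T_\hbar[\mathcal{Q}])|\nabla|\big\|_{\mathfrak{S}^1}\Big\}^{\frac{3}{5}},
\]
and the second factor is bounded by $\textup{Tr}^\hbar((-\hbar^2\Delta)\gamma_\hbar)+\textup{Tr}^\hbar((-\hbar^2\Delta)\textup{Op}^T_\hbar[\mathcal Q])$, which is uniformly bounded by Corollary \ref{CtoQ Casimir cor} and Proposition \ref{TQ-conserv}. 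So the operator-norm smallness \emph{does} propagate to density smallness, precisely because one trades the trace-norm bound (which is $O(1)$ in the $\textup{Tr}^\hbar$ normalization, not $O(\hbar^{-3})$ in the naive trace norm) for a fractional power and absorbs the rest into the kinetic-energy bound. Combining the $L^{5/3}$ smallness with the uniform $L^1$ bound by interpolation yields the $L^{6/5}$ convergence you need, and the Hardy--Littlewood--Sobolev bilinear argument finishes the lemma. Your proposal, as written, has not established the crucial density convergence; the paper's route is both correct and substantially simpler than the Agmon/Vitali scheme you outline.
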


\begin{proof}
For \eqref{eq: CtoQ potential energy1}, we write
\begin{equation}\label{potential energy decomposition}
\begin{aligned}
\int_{\mathbb{R}^6}\frac{\rho_{\gamma_\hbar}^\hbar(x)\rho_{\gamma_\hbar}^\hbar(y)}{|x-y|}-\frac{\rho_{\mathcal{Q}}(x)\rho_{\mathcal{Q}}(y)}{|x-y|}dxdy&=\int_{\mathbb{R}^6}\frac{(\rho_{\gamma_\hbar}^\hbar-\rho_{\mathcal{Q}})(x)(\rho_{\gamma_\hbar}^\hbar+\rho_{\mathcal{Q}})(y)}{|x-y|}dxdy\\
&=\int_{\mathbb{R}^6}\frac{(\rho_{\gamma_\hbar}^\hbar-\rho^\hbar_{\textup{Op}_\hbar^T[\mathcal{Q}]})(x)(\rho_{\gamma_\hbar}^\hbar+\rho_{\mathcal{Q}})(y)}{|x-y|}dxdy\\
&\quad+\int_{\mathbb{R}^6}\frac{(\rho^\hbar_{\textup{Op}_\hbar^T[\mathcal{Q}]}-\rho_{\mathcal{Q}})(x)(\rho_{\gamma_\hbar}^\hbar+\rho_{\mathcal{Q}})(y)}{|x-y|}dxdy\\
&=:I+II.
\end{aligned}
\end{equation}
For $II$, by the Hardy-Littlewood-Sobolev inequality, Proposition \ref{prop-Q} and Corollary \ref{CtoQ Casimir cor}, we obtain 
$$II\lesssim \|\rho^\hbar_{\textup{Op}_\hbar^T[\mathcal{Q}]}-\rho_{\mathcal{Q}}\|_{L^{\frac{6}{5}}}\|\rho_{\gamma_\hbar}^\hbar+\rho_{\mathcal{Q}}\|_{L^\frac{6}{5}}\lesssim \|\rho^\hbar_{\textup{Op}_\hbar^T[\mathcal{Q}]}-\rho_{\mathcal{Q}}\|_{L^{\frac{6}{5}}}.$$
Hence, Proposition \ref{TQ-conserv} implies that $II\to 0$.

For $I$, applying the Hardy-Littlewood-Sobolev inequality and the $L^p$ interpolation theorem, we obtain
$$\begin{aligned}
I&\lesssim \|\rho_{\gamma_\hbar}^\hbar-\rho^\hbar_{\textup{Op}_\hbar^T[\mathcal{Q}]}\|_{L^{\frac{6}{5}}}\|\rho_{\gamma_\hbar}^\hbar+\rho_{\mathcal{Q}}\|_{L^\frac{6}{5}}\\
&\leq \|\rho_{\gamma_\hbar}^\hbar-\rho^\hbar_{\textup{Op}_\hbar^T[\mathcal{Q}]}\|_{L^{\frac{5}{3}}}^{\frac{5}{12}}\|\rho_{\gamma_\hbar}^\hbar-\rho^\hbar_{\textup{Op}_\hbar^T[\mathcal{Q}]}\|_{L^1}^{\frac{7}{12}}\|\rho_{\gamma_\hbar}^\hbar+\rho_{\mathcal{Q}}\|_{L^\frac{6}{5}}\\
&\leq \|\rho_{\gamma_\hbar}^\hbar-\rho^\hbar_{\textup{Op}_\hbar^T[\mathcal{Q}]}\|_{L^{\frac{5}{3}}}^{\frac{5}{12}}\Big\{\|\rho_{\gamma_\hbar}^\hbar\|_{L^1}+\|\rho_{\mathcal{Q}}\|_{L^1}\Big\}^{\frac{7}{12}}\Big\{\|\rho_{\gamma_\hbar}^\hbar\|_{L^\frac{6}{5}}+\|\rho_{\mathcal{Q}}\|_{L^\frac{6}{5}}\Big\}.
\end{aligned}$$
By Proposition \ref{prop-Q} and Corollary \ref{CtoQ Casimir cor}, the last two factors on the right hand side bound are uniformly bounded. Thus, it suffices to show $\rho_{\gamma_\hbar}^\hbar-\rho_{\textup{Op}_\hbar^T[\mathcal{Q}]}^\hbar\to0$ in $L^{\frac{5}{3}}$. Indeed, by the Lieb-Thirring inequality (Theorem \ref{LT inequality}), we have
$$\begin{aligned}
\|\rho_{\gamma_\hbar}^\hbar-\rho_{\textup{Op}^T_\hbar[\mathcal{Q}]}^\hbar\|_{L^{\frac{5}{3}}}
&\lesssim\|\gamma_\hbar-\textup{Op}^T_\hbar[\mathcal{Q}]\|^{\frac{2}{5}}
\Big\{(2\pi\hbar)^3\big\||\nabla|(\gamma_\hbar-\textup{Op}^T_\hbar[\mathcal{Q}])|\nabla|\big\|_{\mathfrak{S}^1}\Big\}^{\frac{3}{5}}\\
&\leq\|\gamma_\hbar-\textup{Op}^T_\hbar[\mathcal{Q}]\|^{\frac{2}{5}}
\Big\{\textup{Tr}^\hbar\big((-\hbar^2\Delta)\gamma_\hbar\big)+\textup{Tr}^\hbar\big((-\hbar^2\Delta)\textup{Op}^T_\hbar[\mathcal{Q}]\big)\Big\}^{\frac{3}{5}}.
\end{aligned}$$
Note that by Corollary \ref{CtoQ Casimir cor}, $\textup{Tr}^\hbar((-\hbar^2\Delta)\gamma_\hbar)$ is uniformly bounded, while by Proposition \ref{TQ-conserv}
$$\textup{Tr}^\hbar\big((-\hbar^2\Delta)\textup{Op}^T_\hbar[\mathcal{Q}]\big)\to\iint |p|^2 \mathcal{Q}(q,p)dqdp.$$
On the other hand, by Proposition \ref{Toplitz quant convergence}, $\gamma_\hbar-\textup{Op}^T_\hbar[\mathcal{Q}]$ converges to zero in the operator norm. Therefore, collecting all, we complete the proof of \eqref{eq: CtoQ potential energy1}. The convergence \eqref{eq: CtoQ potential energy2} can be proved by the same way.
\end{proof}

\begin{proof}[Proof of Proposition \ref{prop: CtoQ}]
By \eqref{CtoQ Casimir application1}, it suffices to show convergence in energy. Indeed, \eqref{CtoQ Casimir application2} and Lemma \ref{CtoQ potential energy} imply that
$$\begin{aligned}
\mathcal{E}^\hbar(\gamma_\hbar)-\mathcal{E}(\mathcal{Q})&=\textup{Tr}^\hbar\big((-\tfrac{\hbar^2}{2}\Delta-U)\gamma_\hbar\big)-\int_{\mathbb{R}^6}\big(\tfrac{|p|^2}{2}-U(q)\big)\mathcal{Q}(q,p)dqdp\\
&\quad+\int_{\mathbb{R}^6} \frac{\rho_{\mathcal{Q}}(x)\rho_{\gamma_\hbar}^\hbar(y)}{|x-y|}dxdy-\int_{\mathbb{R}^6} \frac{\rho_{\mathcal{Q}}(x)\rho_{\mathcal{Q}}(y)}{|x-y|}dxdy\\
&\quad-\frac{1}{2}\int_{\mathbb{R}^6}\frac{\rho_{\gamma_\hbar}^\hbar(x)\rho_{\gamma_\hbar}^\hbar(y)}{|x-y|}dxdy+\frac{1}{2}\int_{\mathbb{R}^6} \frac{\rho_{\mathcal{Q}}(x)\rho_{\mathcal{Q}}(y)}{|x-y|}dxdy
\end{aligned}$$
converges to zero.
\end{proof}

\begin{proof}[Proof of Theorem \ref{thm1} $(i)$]
By Theorem \ref{result-ADS} $(i)$, the strict inequality \eqref{min free energy 1} implies existence of a quantum free energy minimizer.
\end{proof}

\section{Properties of the quantum minimizers}\label{sec: prop of quantum minimizer}

In the next section, we will prove
\begin{equation}\label{opposite min free energy 1}
\liminf_{\hbar\to0}\mathcal{J}_{\min}^\hbar\geq\mathcal{J}_{\min},
\end{equation}
with which \eqref{min free energy 1} implies $\mathcal{J}_{\min}^\hbar\to\mathcal{J}_{\min}$. In this section, as a preparation, we collect useful properties of quantum minimizers that are uniform in all small $\hbar>0$.

Let
\begin{equation}\label{Q equation for property}
\mathcal{Q}_\hbar=\tilde{\beta}\big(-\mu_\hbar-(-\tfrac{\hbar^2}{2}\Delta-U_h)\big)
\end{equation}
be a minimizer for the variational problem \eqref{quantum VP} whose existence is proved in the previous section.
We denote by $-\mu_j=-\mu_{\hbar;j}$ the negative eigenvalues (counting multiplicity) of the Schr\"odinger operator $-\tfrac{\hbar^2}{2}\Delta-U_\hbar$ with non-decreasing order $-\mu_1\leq-\mu_2\leq\cdots<0$, and let $\phi_j=\phi_{\hbar;j}$ be the $L^2$-normalized real-valued eigenfunction corresponding to the eigenvalue $-\mu_j$, i.e.,
\begin{equation}\label{eigenvalue equation}
(-\tfrac{\hbar^2}{2}\Delta-U_\hbar)\phi_j=-\mu_j\phi_j.
\end{equation}
From these notations and by functional calculus, we deduce the eigenfunction expansion for the quantum minimizer 
\begin{equation}\label{eigenfunction expansion of Q}
\mathcal{Q}_\hbar=\sum_{-\mu_j<-\mu_\hbar} \lambda_j\ket{\phi_j}\bra{\phi_j},
\end{equation}
where
\begin{equation}\label{eigenvalues relation}
\lambda_j:=\tilde{\beta}(\mu_j-\mu_\hbar)\quad(\textup{equivalently, }\mu_j=\mu_\hbar+\beta'(\lambda_j)).
\end{equation}

The following uniform bounds are an immediate consequence of the Lieb-Thirring inequality (Theorem \ref{LT inequality}) and the variational character of quantum minimizers. 

\begin{lemma}[Uniform bounds for quantum minimizers]\label{uniform boundedness}
Suppose that $\beta$ satisfies \textup{\textbf{(A1)}$-$\textbf{(A4)}} and let $\mathcal{Q}_\hbar$ be a minimizer for the quantum variational problem \eqref{quantum VP} of the form \eqref{Q equation for property}. Then, the following hold.
\begin{enumerate}[$(i)$]
\item (Minimum free energy bound)
\[\sup_{\hbar\in(0,1]}\mathcal{J}_{\min}^\hbar<\infty.\]
\item (Kinetic energy and Casimir functional bounds)
\[
\sup_{\hbar\in(0,1]}\textup{Tr}^\hbar\big((-\hbar^2\Delta)\mathcal{Q}_\hbar\big)<\infty,\quad\sup_{\hbar\in(0,1]}\textup{Tr}^\hbar\big(\beta(\mathcal{Q}_\hbar)\big)<\infty.
\]
\item (Density function and potential energy bounds)
\[\sup_{\hbar\in(0,1]}\|\rho_{\mathcal{Q}_\hbar}^\hbar\|_{L^{\frac{6}{5}}(\mathbb{R}^3)}<\infty,\quad \sup_{\hbar\in(0,1]}\int_{\mathbb{R}^6}\frac{\rho_{\mathcal{Q}_\hbar}^\hbar(x)\rho_{\mathcal{Q}_\hbar}^\hbar(y)}{|x-y|}dxdy<\infty\]
\end{enumerate}
\end{lemma}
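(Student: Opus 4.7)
The plan is to prove the three parts in sequence, with part (ii) being the technical core. Part (i) is immediate: by Proposition \ref{prop: CtoQ} (see \eqref{min free energy 1}), $\limsup_{\hbar\to 0}\mathcal{J}^\hbar_{\min} \leq \mathcal{J}_{\min}<0$, so for all $\hbar$ small enough that $\mathcal{Q}_\hbar$ exists, $\mathcal{J}^\hbar_{\min}$ is uniformly bounded above.

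For (ii), the strategy is to control the (negative) gravitational potential energy by the kinetic energy $K_\hbar := \textup{Tr}^\hbar((-\hbar^2\Delta)\mathcal{Q}_\hbar)$ and the Casimir $C_\hbar := \textup{Tr}^\hbar(\beta(\mathcal{Q}_\hbar))$ in a sub-coercive manner, and then close the bootstrap via the free-energy identity
\[
K_\hbar + C_\hbar = \mathcal{J}^\hbar(\mathcal{Q}_\hbar) + \tfrac{1}{2}\iint \frac{\rho^\hbar_{\mathcal{Q}_\hbar}(x)\rho^\hbar_{\mathcal{Q}_\hbar}(y)}{|x-y|}\,dx\,dy.
\]
Concretely, I would first bound the potential energy by $\|\rho_{\mathcal{Q}_\hbar}^\hbar\|_{L^{6/5}}^2$ via Hardy--Littlewood--Sobolev, interpolate $L^{6/5}$ between $L^1$ (using $\|\rho_{\mathcal{Q}_\hbar}^\hbar\|_{L^1}=M$) and $L^{p_{m_1}}$ with $p_{m_1}=(5m_1-3)/(3m_1-1)$, then invoke the semi-classical Lieb--Thirring inequality \eqref{semi-classical LT} with $\alpha = m_1$ on the $L^{p_{m_1}}$-factor. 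Since $s^{m_1}\leq s$ for $s\leq 1$ (as $m_1>1$) and $s^{m_1}\leq C\beta(s)$ for $s\geq 1$ by (A2) combined with strict convexity of $\beta$ (i.e.\ (A1)), one has $\textup{Tr}^\hbar(\mathcal{Q}_\hbar^{m_1})\leq M + C C_\hbar$.

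A direct exponent calculation then reveals the point: for any $m_1>5/3$, the exponent of $K_\hbar$ in the resulting bound lands exactly at $1/2$, while the exponent of $1+C_\hbar$ is $1/(3(m_1-1))<1$. Concretely,
\[
\iint \frac{\rho_{\mathcal{Q}_\hbar}^\hbar(x)\rho_{\mathcal{Q}_\hbar}^\hbar(y)}{|x-y|}\,dx\,dy \;\leq\; C(1+C_\hbar)^{1/(3(m_1-1))} K_\hbar^{1/2}.
\]
Combined with the upper bound from (i) and the free-energy identity, two successive applications of Young's inequality --- first absorbing $K_\hbar^{1/2}$ into $K_\hbar$, and then absorbing the sub-linear power of $C_\hbar$ into $C_\hbar$ --- yield uniform bounds on both $K_\hbar$ and $C_\hbar$. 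Part (iii) follows at once: the $L^{6/5}$ bound is read off from the same interpolation chain, and the potential-energy bound is obtained by rewriting the free-energy identity as $\tfrac{1}{2}\iint \rho_{\mathcal{Q}_\hbar}^\hbar \rho_{\mathcal{Q}_\hbar}^\hbar/|x-y| = K_\hbar + C_\hbar - \mathcal{J}^\hbar_{\min}$.

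The main obstacle is the exponent bookkeeping: the fact that both the $K_\hbar^{1/2}$ power and a strictly sub-linear power of $C_\hbar$ arise simultaneously is exactly what the threshold $m_1>5/3$ from (A2) encodes, and this coercivity threshold is precisely why the variational problem is well-posed in the gravitational regime. Assumption (A4) plays no role in this lemma; the argument relies only on (A1)--(A2).
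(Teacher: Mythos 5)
Your proposal is correct and follows essentially the same three-step architecture as the paper: bound the potential energy by $\|\rho_{\mathcal{Q}_\hbar}^\hbar\|_{L^{6/5}}^2$ via Hardy--Littlewood--Sobolev, convert that to a product $K_\hbar^{1/2}(1+C_\hbar)^{1/(3(m_1-1))}$ through Lieb--Thirring, and close by two applications of Young's inequality against the free-energy identity, with part (i) coming for free from Proposition \ref{prop: CtoQ}. The only technical divergence is how you reach the $L^{6/5}$ norm: the paper applies the semi-classical Lieb--Thirring inequality \eqref{semi-classical LT} at the exponent $\alpha=9/7$ (so that $\tfrac{5\alpha-3}{3\alpha-1}=\tfrac{6}{5}$ exactly), then splits $\textup{Tr}^\hbar(\mathcal{Q}_\hbar^{9/7})$ between small and large eigenvalues with a cutoff $R$ and optimizes $R$, using (A2) on the tail; you instead apply Lieb--Thirring at $\alpha=m_1$, interpolate $L^{6/5}$ between $L^1$ and $L^{(5m_1-3)/(3m_1-1)}$, and use the pointwise inequality $s^{m_1}\lesssim s+\beta(s)$. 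These are parallel bookkeeping tricks and land on identical exponents --- in particular both produce the crucial $K_\hbar^{1/2}$ and a sub-linear power of $C_\hbar$, which is exactly where the threshold $m_1>5/3$ enters. One small imprecision: you say the argument relies only on (A1)--(A2), but part (i) is obtained via Proposition \ref{prop: CtoQ}, whose proof goes through a classical minimizer and hence uses (A3) as well; what is true is that (A4) is not invoked anywhere in this lemma.
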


\begin{proof}
First, we note that $(i)$ is proved in Proposition \ref{prop: CtoQ}.
For $(ii)$, we estimate the potential energy by the Hardy-Littlewood-Sobolev inequality and the Lieb-Thirring inequality (Theorem \ref{LT inequality}),
\begin{equation}\label{temp potential energy bound}
\int_{\mathbb{R}^6}\frac{\rho^\hbar_{\mathcal{Q}_\hbar}(x)\rho^\hbar_{\mathcal{Q}_\hbar}(y)}{|x-y|}dxdy\lesssim \|\rho_{\mathcal{Q}_\hbar}\|_{L^{\frac{6}{5}}}^2\lesssim \left\{\textup{Tr}^\hbar\big(\mathcal{Q}_\hbar^{\frac{9}{7}}\big)\right\}^{\frac{7}{6}}\left\{\textup{Tr}^\hbar\big((-\hbar^2\Delta)\mathcal{Q}_\hbar\big)\right\}^{\frac{1}{2}}.
\end{equation}
By the assumption \textup{\textbf{(A2)}}, we have
$$\begin{aligned}
\textup{Tr}^\hbar\big(Q_\hbar^{\frac{9}{7}}\big)&=(2\pi\hbar)^3\sum_j\lambda_j^{\frac{9}{7}}=(2\pi\hbar)^3\sum_{\lambda_j< R}\lambda_j^{\frac{9}{7}}+(2\pi\hbar)^3\sum_{\lambda_j\geq R}\lambda_j^{\frac{9}{7}}\\
&\leq R^{\frac{2}{7}}(2\pi\hbar)^3\sum_{\lambda_j< R}\lambda_j+\frac{1}{R^{m-\frac{9}{7}}}(2\pi\hbar)^3\sum_{\lambda_j\geq R}\lambda_j^m\\
&\leq R^{\frac{2}{7}}M+\frac{1}{CR^{m-\frac{9}{7}}}\textup{Tr}^\hbar\big(\beta(\mathcal{Q}_\hbar)\big).
\end{aligned}$$
for any large $R\geq 1$. Taking $R=\max\{(\frac{\textup{Tr}^\hbar(\beta(\mathcal{Q}_\hbar))}{CM})^{\frac{1}{m-1}},1\}$, the above bound can be optimized as 
$$\textup{Tr}^\hbar\big(Q_\hbar^{\frac{9}{7}}\big)\leq M+2 C^{-\frac{2}{7(m-1)}}M^{\frac{7m-9}{7(m-1)}}\Big\{\textup{Tr}^\hbar\big(\beta(\mathcal{Q}_\hbar)\big)\Big\}^{\frac{2}{7(m-1)}}.$$
Hence, inserting it into \eqref{temp potential energy bound} and then applying Young's inequality with $\frac{3m-5}{6(m-1)}+\frac{1}{3(m-1)}+\frac{1}{2}=1$, we prove that there exists $D>0$, independent of $\hbar\in(0,1]$, such that 
\begin{equation}\label{temp potential bound}
\frac{1}{2}\int_{\mathbb{R}^6}\frac{\rho_{\mathcal{Q}_\hbar}^\hbar(x)\rho_{\mathcal{Q}_\hbar}^\hbar(y)}{|x-y|}dxdy\leq D M^{\frac{7m-9}{3m-5}}+\frac{3}{4}\textup{Tr}^\hbar\beta(\mathcal{Q}_\hbar)+\frac{1}{4}\textup{Tr}^\hbar\big((-\hbar^2\Delta)\mathcal{Q}_\hbar\big).
\end{equation}
Consequently, a lower bound is obtained for the energy-Casimir functional,  
$$\begin{aligned}
\mathcal{J}_{\min}^\hbar&=\frac{1}{2}\textup{Tr}^\hbar\big((-\hbar^2\Delta)\mathcal{Q}_\hbar\big)-\frac{1}{2}\int_{\mathbb{R}^6}\frac{\rho_{\mathcal{Q}_\hbar}^\hbar(x)\rho_{\mathcal{Q}_\hbar}^\hbar(y)}{|x-y|}dxdy+\textup{Tr}^\hbar\beta(\mathcal{Q}_\hbar)\\
&\geq \frac{1}{4}\left\{\textup{Tr}^\hbar\big((-\hbar^2\Delta)\mathcal{Q}_\hbar\big)+\textup{Tr}^\hbar\big(\beta(\mathcal{Q}_\hbar)\big)\right\}-D M^{\frac{7m-9}{3m-5}}.
\end{aligned}$$
Since $\mathcal{J}_{\min}^\hbar$ is uniformly bounded, this completes the proof of $(ii)$. 
Then, the proof of $(iii)$ follows from the Lieb-Thirring inequality (Theorem \ref{LT inequality}) with $(ii)$ and the Hardy-Littlewood-Sobolev inequality. 
\end{proof}

Next, exploiting the Euler-Lagrange equation \eqref{Q equation for property} little bit further, we improve some of the previous bounds. 

\begin{lemma}[More uniform bounds for quantum free minimizers]\label{improved uniform boundedness}
If $\beta$ satisfies \textup{\textbf{(A1)}$-$\textbf{(A4)}}, then a quantum minimizer $\mathcal{Q}_\hbar$ of the form \eqref{Q equation for property} satisfies the following.
\begin{enumerate}[$(i)$]
\item (Schatten norm bounds)
$$\sup_{\hbar\in(0,1]}\textup{Tr}^\hbar\big(\mathcal{Q}_\hbar^\alpha\big)<\infty\ \textup{for }1\leq\alpha<\infty,\quad\sup_{\hbar\in(0,1]}\|\mathcal{Q}_\hbar\|<\infty.$$
\item (Density and potential function bounds)
$$\sup_{\hbar\in(0,1]}\|U_\hbar\|_{L^{3,\infty}(\mathbb{R}^3)\cap C^{0,\frac{1}{5}}(\mathbb{R}^3)}\lesssim \sup_{\hbar\in(0,1]}\|\rho_{\mathcal{Q}_\hbar}^\hbar\|_{L^1(\mathbb{R}^3)\cap L^{\frac{5}{3}}(\mathbb{R}^3)}<\infty.$$
\item (Lagrange multiplier bound)
$$\inf_{\hbar\in(0,1]}(-\mu_\hbar)>-\infty.$$
\end{enumerate}
\end{lemma}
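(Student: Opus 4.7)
The proof proceeds in the order (iii), (i), (ii). Part (iii) drops out of the Euler--Lagrange identity; part (i) requires a bootstrap through the dual Lieb--Thirring inequality to escape a circular dependence; part (ii) then follows from (i) by the endpoint Lieb--Thirring inequality and standard Newton-potential estimates. The genuine difficulty is the circularity in (i).

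For (iii), the Euler--Lagrange relation $\beta'(\mathcal{Q}_\hbar)=-\mu_\hbar-\hat{E}_\hbar$ on the range of $\mathcal{Q}_\hbar$ (read off the eigenfunction expansion \eqref{eigenfunction expansion of Q} together with $\beta'(\lambda_j)=\mu_j-\mu_\hbar$) gives, after multiplication by $\mathcal{Q}_\hbar$ and $\hbar$-weighted trace,
$$-\mu_\hbar M=\textup{Tr}^\hbar\bigl(\beta'(\mathcal{Q}_\hbar)\mathcal{Q}_\hbar\bigr)+\textup{Tr}^\hbar\bigl((-\tfrac{\hbar^2}{2}\Delta)\mathcal{Q}_\hbar\bigr)-\int_{\mathbb{R}^6}\frac{\rho_{\mathcal{Q}_\hbar}^\hbar(x)\rho_{\mathcal{Q}_\hbar}^\hbar(y)}{|x-y|}\,dxdy.$$
The first two terms on the right are nonnegative, and the double integral is uniformly bounded by Lemma \ref{uniform boundedness}(iii), so $-\mu_\hbar\geq -C$ uniformly in $\hbar$.

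For (i), observe first that $\|\mathcal{Q}_\hbar\|=\tilde{\beta}(\mu_1-\mu_\hbar)\leq \tilde{\beta}(\|U_\hbar\|_{L^\infty})$, since the ground-state eigenvalue satisfies $\mu_1\leq\|U_\hbar\|_{L^\infty}$. Thus it suffices to bound $\|U_\hbar\|_{L^\infty}$, which in turn requires $\rho_{\mathcal{Q}_\hbar}^\hbar\in L^p$ for some $p>\tfrac{3}{2}$---whereas Lemma \ref{uniform boundedness} provides only $L^{6/5}$. To break this circularity, feed the $L^{6/5}$ bound into Hardy--Littlewood--Sobolev to obtain $\|U_\hbar\|_{L^6}\leq C$, and then invoke the dual Lieb--Thirring inequality (Lemma \ref{LT dual}) with exponent $\alpha=9/2$:
$$(2\pi\hbar)^3\sum_j \mu_j^{9/2}\lesssim \|U_\hbar\|_{L^6}^6\leq C.$$
Since $\mu_j\geq \mu_j-\mu_\hbar=\beta'(\lambda_j)$, and by (A2) combined with convexity one has $\beta'(s)\geq \beta(s)/s\gtrsim s^{m_1-1}$ for $s$ large, this translates (with small eigenvalues controlled by the mass bound) into a uniform bound on $\textup{Tr}^\hbar(\mathcal{Q}_\hbar^{9(m_1-1)/2})$. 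Since $9(m_1-1)/2>3$ by (A2), the Lieb--Thirring inequality (Theorem \ref{LT inequality}) with this exponent produces $\|\rho_{\mathcal{Q}_\hbar}^\hbar\|_{L^p}\leq C$ for some $p>\tfrac{3}{2}$, whence $\|U_\hbar\|_{L^\infty}\leq C$ and $\|\mathcal{Q}_\hbar\|\leq \tilde{\beta}(C)$. All remaining Schatten bounds then follow from $\textup{Tr}^\hbar(\mathcal{Q}_\hbar^\alpha)\leq \|\mathcal{Q}_\hbar\|^{\alpha-1}\mathcal{M}^\hbar(\mathcal{Q}_\hbar)$.

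For (ii), the operator-norm bound from (i) combined with the endpoint Lieb--Thirring inequality \eqref{endpoint semi-classical LT} and Lemma \ref{uniform boundedness}(ii) yields $\|\rho_{\mathcal{Q}_\hbar}^\hbar\|_{L^{5/3}}\leq C$ uniformly; together with the mass constraint this gives the right-hand side. For the left-hand side, weak Hardy--Littlewood--Sobolev gives $\|U_\hbar\|_{L^{3,\infty}}\lesssim \|\rho_{\mathcal{Q}_\hbar}^\hbar\|_{L^1}$; HLS applied to $\nabla U_\hbar=-\tfrac{x}{|x|^3}*\rho_{\mathcal{Q}_\hbar}^\hbar$ produces $\|\nabla U_\hbar\|_{L^{15/4}}\lesssim \|\rho_{\mathcal{Q}_\hbar}^\hbar\|_{L^{5/3}}$; splitting the Coulomb kernel at radius one and applying H\"older yields $\|U_\hbar\|_{L^\infty}\lesssim \|\rho_{\mathcal{Q}_\hbar}^\hbar\|_{L^1\cap L^{5/3}}$; and Morrey's embedding $W^{1,15/4}(\mathbb{R}^3)\hookrightarrow C^{0,1/5}(\mathbb{R}^3)$ closes the estimate. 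Apart from the bootstrap in (i), every step is classical.
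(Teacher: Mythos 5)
Your proof is correct, and it takes a route that is genuinely different from — and arguably cleaner than — the paper's. The paper orders the parts as $(i) \to (ii) \to (iii)$: it runs a multi-step \emph{iteration} through the dual Lieb--Thirring inequality, tracking the exponent $\tfrac{3(11\alpha-9)}{2(3-\alpha)}(m_1-1)$ to climb from $\alpha\in(\tfrac{9}{7},3)$ up to all $\alpha<\infty$, then invokes the De~Giorgi--Nash--Morse estimate to extract a uniform $L^\infty$ bound on $U_\hbar$ (since HLS alone gives $L^q$ with a $q$-dependent constant), uses Calder\'on--Zygmund plus Sobolev $W^{2,5/3}\hookrightarrow C^{0,1/5}$ for the H\"older bound in $(ii)$, and only then deduces $(iii)$ from $\mu_\hbar\le\|U_\hbar\|_{L^\infty}$. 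You instead prove $(iii)$ \emph{first} and independently, by pairing the Euler--Lagrange relation against $\mathcal{Q}_\hbar$ and taking the $\hbar$-trace — this is essentially the quantum analogue of the classical identity in Proposition~\ref{prop-Q}~(iii) — so that $-\mu_\hbar M$ equals two nonnegative terms minus the potential energy, which Lemma~\ref{uniform boundedness}~(iii) already controls. For $(i)$ you replace the iteration with a \emph{single} application of the dual Lieb--Thirring inequality at exponent $\tfrac{9}{2}$ (matched to the $L^6$ bound on $U_\hbar$ coming from $\rho\in L^{6/5}$), note that $\tfrac{9}{2}(m_1-1)>3$ because $m_1>\tfrac{5}{3}$, and so obtain $\rho_{\mathcal{Q}_\hbar}^\hbar\in L^p$ for some $p>\tfrac{3}{2}$ in one step; the Coulomb-kernel split near and away from the singularity then yields $\|U_\hbar\|_{L^\infty}\lesssim\|\rho\|_{L^1\cap L^p}$ directly, sidestepping De~Giorgi--Nash--Morse entirely. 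Once $\|\mathcal{Q}_\hbar\|$ is controlled, $\textup{Tr}^\hbar(\mathcal{Q}_\hbar^\alpha)\le\|\mathcal{Q}_\hbar\|^{\alpha-1}M$ gives all Schatten bounds without any iteration. Your $(ii)$ replaces the $W^{2,5/3}$ elliptic estimate by the Morrey embedding $\nabla U_\hbar\in L^{15/4}\Rightarrow U_\hbar\in C^{0,1/5}$, which reaches the same H\"older exponent $1-\tfrac{3}{15/4}=\tfrac{1}{5}$. One stylistic remark: the paper attributes the bound $\beta'(s)\gtrsim s^{m_1-1}$ to \textbf{(A4)}, but the argument it gives (tangent-line convexity plus $\beta(s)\gtrsim s^{m_1}$) in fact uses \textbf{(A1)} and \textbf{(A2)}, which is exactly how you state it; your attribution is the more accurate one.
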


\begin{proof}
We perform the following iteration. Assume that
\begin{equation}\label{iterpation input}
\sup_{\hbar\in(0,1]}\textup{Tr}^\hbar\big(\mathcal{Q}_\hbar^{\alpha}\big)<\infty
\end{equation}
for some $\alpha\in(\frac{9}{7},3)$. Then, the Hardy-Sobolev and the Lieb-Thirring inequalities imply that
$$\sup_{\hbar\in(0,1]}\|U_\hbar\|_{L^{\frac{3(5\alpha-3)}{3-\alpha}}}\lesssim\sup_{\hbar\in(0,1]}\|\rho_{\mathcal{Q}_\hbar}^\hbar\|_{L^{\frac{5\alpha-3}{3\alpha-1}}}<\infty.$$
Recalling the eigenfunction expansion \eqref{eigenfunction expansion of Q} for $\mathcal{Q}_\hbar$, let $-\mu_j$ be the negative eigenvalues of the Schr\"odinger operator $-\tfrac{\hbar^2}{2}\Delta-U_\hbar$. Then, the dual Lieb-Thirring inequality (Lemma \ref{LT dual}) and \eqref{eigenvalues relation} yield
\begin{equation}\label{temp beta' bound}
\begin{aligned}
\infty&>\sup_{\hbar\in(0,1]}(2\pi\hbar)^3\sum_j \mu_j^{\frac{3(11\alpha-9)}{2(3-\alpha)}}\geq\sup_{\hbar\in(0,1]}(2\pi\hbar)^3\sum_{-\mu_j<-\mu_\hbar} (\mu_\hbar+\beta'(\lambda_j))^{\frac{3(11\alpha-9)}{2(3-\alpha)}}\\
&\geq \sup_{\hbar\in(0,1]}(2\pi\hbar)^3\sum_{-\mu_j<-\mu_\hbar} \beta'(\lambda_j)^{\frac{3(11\alpha-9)}{2(3-\alpha)}}
\end{aligned}
\end{equation}
We claim that by Assumption \textup{\textbf{(A4)}},
\begin{equation}\label{beta' lower bound}
\beta'(s)\geq C s^{m-1}
\end{equation}
for all large $s>0$. Indeed, by convexity, the graph of $y=\beta(x)$ is above the graph of its tangent line at $(s,\beta(s))$, i.e., $y=\beta(s)+\beta'(s)(x-s)$. In particular, when $x=0$ and $s>0$ is large, we have $0=\beta(0)\geq \beta(s)+\beta'(s)(-s)$, which implies that $\beta'(s)\geq\beta(s)/s\geq Cs^{m-1}$. Therefore,  applying the claim \eqref{beta' lower bound} to \eqref{temp beta' bound}, we conclude that
\begin{equation}\label{iteration output}
\sup_{\hbar\in(0,1]}\textup{Tr}^\hbar\Big(\mathcal{Q}_\hbar^{\frac{3(11\alpha-9)}{2(3-\alpha)}(m-1)}\Big)=\sup_{\hbar\in(0,1]}(2\pi\hbar)^3\sum_{-\mu_j<-\mu_\hbar} \lambda_j^{\frac{3(11\alpha-9)}{2(3-\alpha)}(m-1)}<\infty.
\end{equation}

By Assumption \textup{\textbf{(A2)}}, we can start the above iteration. Moreover, since $\frac{3(11\alpha-9)}{2(3-\alpha)}(m-1)-\alpha\geq \frac{3(11\alpha-9)}{7(3-\alpha)}-\alpha=\frac{(7\alpha-9)(\alpha+3)}{7(3-\alpha)}\geq \frac{7m-9}{7}$, the exponent increases at least uniformly in each iteration step from \eqref{iterpation input} to \eqref{iteration output}. As a result, the uniform bound \eqref{iterpation input} is derived for all $1\leq\alpha<3$ (here, the case $\alpha=1$ is from the mass constraint).

We denote by $a^-$ a real number which is strictly less than but can be chosen arbitrarily close to $a$. Performing the iteration once again with $\alpha=3^-$, we obtain the bound \eqref{iterpation input} for all $1\leq\alpha<\infty$ (see \eqref{iteration output}), that is, the first part of $(i)$. 
Consequently, the Hardy-Littlewood-Sobolev inequality and the Lieb-Thirring inequality yield
\begin{equation}\label{HLS resulting inequality}
\sup_{\hbar\in(0,1]}\|U_\hbar\|_{L^{3,\infty} \cap L^q} \lesssim \sup_{\hbar\in(0,1]}\|\rho_{\mathcal{Q}_\hbar}^\hbar\|_{L^1\cap L^{\frac{5}{3}^-}}<\infty
\end{equation}
for every $q \in (3, \infty)$. Here we note that a $\hbar$-uniform $L^\infty$ bound for $U_{\hbar}$ does not directly follow from \eqref{HLS resulting inequality},  since due to the Hardy-Littlewood-Sobolev inequality, the implicit constant in \eqref{HLS resulting inequality} may depend on $q$ as $q\to \infty$. Hence, we instead employ the well-known De Giorgi-Nash-Morse estimate to obtain
\begin{equation}\label{GNM est}
|U_\hbar(x)| \leq \|U_\hbar\|_{L^\infty(B_{1/2}(x))} 
\lesssim \|U_\hbar\|_{L^{q_0}(B_1(x))}+ \|\rho^{\hbar}_{\mathcal{Q}_\hbar}\|_{L^{s_0}(B_1(x))} 
\lesssim \|\rho^{\hbar}_{\mathcal{Q}_\hbar}\|_{L^1(\R^3) \cap L^{\frac{5}3^-}(\R^3)}, 
\end{equation}
where $q_0 \in (3,\, \infty)$ and $s_0 \in (\frac{3}{2},\, \frac{5}{3})$ are arbitrary but fixed values. In \eqref{GNM est}, the implicit constant is independent of $x$ and $\hbar$ so that the desired $L^\infty$ bound holds,
\begin{equation}\label{L-infty est}2
\sup_{\hbar \in (0, 1]}\|U_\hbar\|_{L^\infty} 
\lesssim \sup_{\hbar\in(0,1]}\|\rho_{\mathcal{Q}_\hbar}^\hbar\|_{L^1\cap L^{\frac{5}{3}^-}} < \infty.
\end{equation}
We now observe that $-\mu_\hbar-(-\tfrac{\hbar^2}{2}\Delta-U_\hbar)=U_\hbar-(\mu_\hbar-\hbar^2\Delta)\leq \|U_\hbar\|_{L^\infty}$ as operators on $L^2(\mathbb{R}^3)$. Thus, by functional calculus, we see that
$$
\sup_{\hbar\in(0,1]}\|\mathcal{Q}_\hbar\|_{\mathfrak{S}^\infty}\leq \sup_{\hbar\in(0,1]}\Big\{\max_{[0,\|U_\hbar\|_{L^\infty}]}|\tilde{\beta}(s)|\Big\}<\infty,
$$
from which the second part of $(i)$ follows. 

Next, we prove $(ii)$. First, we note that by the Lieb-Thirring inequality with $\alpha=\infty$, the exponent $\frac{5}{3}^-$ in \eqref{L-infty est} can be replaced by $\frac{5}{3}$. It remains to obtain a uniform H\"older semi-norm bound. 
In other words, we need to show
\[
\frac{|U_\hbar(x)-U_\hbar(y)|}{|x-y|^{1/5}} \leq C\|\rho_{\mathcal{Q}_\hbar}^\hbar\|_{L^1\cap L^{\frac{5}{3}}}
\]
for some $C$ independent of $x, y \in \R^3$ and $\hbar$. 
For any choice of $x, y$ such that $|x-y| \geq 1/2$, this immediately follows from the $L^\infty$ estimate of $U_\hbar$. 
Thus, we may assume for fixed $x \in \R^3$, $y \in B_{1/2}(x)$. 
Then, the Calderon-Zigmund estimate and the Sobolev inequality yield
\[
\frac{|U_\hbar(x)-U_\hbar(y)|}{|x-y|^{1/5}} \leq 
\|U_\hbar\|_{C^{0,\frac15}(\overline{B_{1/2}(x)})}\leq C\|U_\hbar\|_{W^{2, 5/3}(B_{1/2}(x))} 
\leq C\|\rho^{\hbar}_{\mathcal{Q}_\hbar}\|_{L^{\frac53}(B_1(x))},
\]
where the constant $C$ is independent of $x$ and $\hbar$. This proves $(ii)$.

For $(iii)$, we claim that $\mu_\hbar\leq \|U_\hbar\|_{L^\infty}$. Indeed, if it is not, $U_\hbar-(\mu_\hbar-\hbar^2\Delta)$ is non-positive. 
Hence, a contradiction $\mathcal{Q}_\hbar=\tilde{\beta}(\mu_\hbar-(-\tfrac{\hbar^2}{2}\Delta-U_\hbar))=0$ is derived.
\end{proof}

As a corollary, we obtain the following profile decomposition.

\begin{corollary}[Profile decomposition for quantum potentials]\label{profile decomp for quantum potential}
Suppose that $\beta$ satisfies \textup{\textbf{(A1)}$-$\textbf{(A4)}}. Let $\mathcal{Q}_\hbar$ be a minimizer for the quantum variational problem \eqref{quantum VP}, and let $\{\hbar_n\}_{n=1}^\infty$ be a sequence such that $\hbar_n\to 0$ as $n\to\infty$. Then, for each $J\in\mathbb{N}$, there exist a subsequence of $\{U_{\hbar_n}\}_{n=1}^\infty$ (but still denoted by $U_{\hbar_n}$), $U^j\in L^{\frac{15}{4}}(\mathbb{R}^3)$, $\{x_n^j\}_{n=1}^\infty$ with $j\in \{1,2,\cdots, J\}$ and $R_n^J\in L^{\frac{15}{4}}(\mathbb{R}^3)$ such that
\begin{equation}\label{profile decomp1}
U_{\hbar_n}=\sum_{j=1}^JU^j(\cdot-x_n^j)+R_n^J,
\end{equation}
\begin{equation}\label{profile decomp2}
\lim_{n\to\infty}|x_n^j-x_n^{j'}|\to \infty\quad\textup{if}\quad j\neq j'
\end{equation}
and
\begin{equation}\label{profile decomp3}
\lim_{J\to\infty}\limsup_{n\to\infty}\|R_n^J\|_{L^r(\mathbb{R}^3)}=0\textup{ for all }\tfrac{15}{4}<r <\infty.
\end{equation}
\end{corollary}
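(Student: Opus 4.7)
The plan is to perform a concentration-compactness type iterative extraction of profiles, exploiting the uniform bounds from Lemma \ref{improved uniform boundedness}. The key input is that $U_{\hbar_n}$ is bounded in $C^{0,1/5}(\mathbb{R}^3) \cap L^{15/4}(\mathbb{R}^3)$; the $L^{15/4}$ bound follows by real interpolation of the uniform $L^{3,\infty} \cap L^\infty$ bound. These two properties serve as the workhorses of the proof: H\"older equicontinuity supplies Arzel\`a--Ascoli compactness after translation, while $L^{15/4}$-boundedness supplies summability of the profile $L^{15/4}$-norms and hence eventual decay of the remainder.

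We iterate as follows. Set $V_n^0 := U_{\hbar_n}$ and $\sigma_J := \limsup_{n\to\infty}\|V_n^J\|_{L^\infty}$. If $\sigma_J > 0$, we pick (along a subsequence) points $x_n^{J+1}$ with $|V_n^J(x_n^{J+1})| \geq \sigma_J/2$. By uniform H\"older regularity the translates $V_n^J(\cdot + x_n^{J+1})$ are equicontinuous and uniformly bounded, so a diagonal Arzel\`a--Ascoli argument yields $V_n^J(\cdot + x_n^{J+1}) \to U^{J+1}$ locally uniformly on $\mathbb{R}^3$ with $U^{J+1} \in C^{0,1/5}$ and $|U^{J+1}(0)| \geq \sigma_J/2$; Fatou gives $U^{J+1} \in L^{15/4}$. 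Setting $V_n^{J+1} := V_n^J - U^{J+1}(\cdot - x_n^{J+1})$, we take $R_n^J := V_n^J$ as the remainder after $J$ profiles. If at some stage $\sigma_J = 0$, the extraction stops trivially.

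The asymptotic orthogonality $|x_n^{J+1} - x_n^{j'}| \to \infty$ for $j' \leq J$ is proved by contradiction. Suppose $x_n^{J+1} - x_n^{j'} \to y^\ast$ along a subsequence. The pivotal observation is that every $U \in L^{15/4}(\mathbb{R}^3) \cap C^{0,1/5}(\mathbb{R}^3)$ necessarily decays at infinity: if $|U(y_m)| \geq c > 0$ with $|y_m|\to\infty$, then H\"older regularity forces $|U| \geq c/2$ on a ball of fixed radius around each $y_m$, and after thinning these balls are disjoint, contradicting $U \in L^{15/4}$. Combining this decay for the earlier profiles $U^{j'+1},\ldots,U^J$ with the inductive orthogonality hypothesis, one shows by induction on $k = j',\ldots,J$ that $V_n^k(\cdot + x_n^{J+1}) \to 0$ locally uniformly; in particular $U^{J+1} \equiv 0$, contradicting $|U^{J+1}(0)| \geq \sigma_J/2 > 0$.

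Finally, we must show $\sigma_J \to 0$ as $J \to \infty$; once this holds, the interpolation $\|R_n^J\|_{L^r} \leq \|R_n^J\|_{L^\infty}^{1 - 15/(4r)}\|R_n^J\|_{L^{15/4}}^{15/(4r)}$ together with the (inductively checked) uniform $L^{15/4}$ bound on $R_n^J$ delivers the claimed remainder decay for every $r \in (15/4,\infty)$. The Brezis--Lieb lemma applied to $V_n^J(\cdot + x_n^{J+1}) \to U^{J+1}$ a.e.\ gives
\[
\|V_n^{J+1}\|_{L^{15/4}}^{15/4} = \|V_n^J\|_{L^{15/4}}^{15/4} - \|U^{J+1}\|_{L^{15/4}}^{15/4} + o(1),
\]
so by induction on $J$, $\sum_{j\geq 1}\|U^j\|_{L^{15/4}}^{15/4} \leq \sup_n \|U_{\hbar_n}\|_{L^{15/4}}^{15/4} < \infty$. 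On the other hand, $|U^{J+1}(0)| \geq \sigma_J/2$ combined with the uniform $C^{0,1/5}$ bound forces $|U^{J+1}| \geq \sigma_J/4$ on a ball of radius of order $\sigma_J^5$, yielding $\|U^{J+1}\|_{L^{15/4}}^{15/4} \gtrsim \sigma_J^{75/4}$; the summability therefore forces $\sigma_J \to 0$. The main obstacle we anticipate is the simultaneous bookkeeping of nested subsequence extractions and the orthogonality induction, and the decay-at-infinity property of the profiles is the pivot that makes the induction close cleanly.
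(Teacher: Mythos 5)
Your proposal takes a genuinely different route from the paper. The paper reduces the statement to the known $H^1$ profile decomposition of Hmidi--Keraani: using Lemma \ref{improved uniform boundedness} $(ii)$ one has $\sup_\hbar\||\nabla|^{7/10}U_\hbar\|_{H^1}<\infty$, one applies the $H^1$ decomposition to $|\nabla|^{7/10}U_{\hbar_n}$, and one inverts $|\nabla|^{7/10}$ via Hardy--Littlewood--Sobolev/Morrey to obtain \eqref{profile decomp1}--\eqref{profile decomp3}. You instead construct the decomposition from scratch, using the uniform $C^{0,1/5}\cap L^{15/4}$ bound on $U_\hbar$ (correctly deduced by interpolating the $L^{3,\infty}\cap L^\infty$ bound), Arzel\`a--Ascoli for the profile extraction, and Brezis--Lieb in $L^{15/4}$ for the Pythagorean decoupling. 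Your treatment of the orthogonality (via the decay-at-infinity of each profile, itself proved by the disjoint-balls packing argument, and the telescoping induction showing $V_n^k(\cdot+x_n^{J+1})\to0$) is sound. The upside of your approach is that it is self-contained and avoids the fractional integral machinery; the paper's approach is shorter once one grants the cited $H^1$ profile decomposition.

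There is, however, one genuine gap in your proof that $\sigma_J\to0$. You claim the lower bound $\|U^{J+1}\|_{L^{15/4}}^{15/4}\gtrsim\sigma_J^{75/4}$ with an implicit constant independent of $J$, which requires a $J$-\emph{uniform} $C^{0,1/5}$-seminorm bound on $U^{J+1}$. But $U^{J+1}$ is constructed as the local limit of $V_n^J(\cdot+x_n^{J+1})$, and the only bound you have on $\|V_n^J\|_{C^{0,1/5}}$ from the iterative subtraction $V_n^{J+1}=V_n^J-U^{J+1}(\cdot-x_n^{J+1})$ grows (at least linearly) with $J$. With a $J$-dependent Hölder constant $K_J$ the inequality becomes $\sigma_J^{75/4}\lesssim K_J^{15}\,\|U^{J+1}\|_{L^{15/4}}^{15/4}$, and summability of the right factor no longer forces $\sigma_J\to0$. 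The fix is available within your own framework: once the decay-at-infinity of $U^1,\dots,U^J$ and the orthogonality $|x_n^{J+1}-x_n^k|\to\infty$ are established, one has
\[
V_n^J(\cdot+x_n^{J+1})=U_{\hbar_n}(\cdot+x_n^{J+1})-\sum_{k=1}^J U^k(\cdot+x_n^{J+1}-x_n^k),
\]
and the sum tends to zero locally uniformly, so $U^{J+1}$ is in fact the local uniform limit of $U_{\hbar_n}(\cdot+x_n^{J+1})$ itself. Hence $\|U^{J+1}\|_{C^{0,1/5}}\leq\sup_\hbar\|U_\hbar\|_{C^{0,1/5}}$, a $J$-independent constant, and your argument closes. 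You should state this observation explicitly; without it the final step does not follow.
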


\begin{proof}
By the Hardy-Littlewood-Sobolev inequality and Lemma \ref{improved uniform boundedness} $(ii)$, we have
$$\sup_{\hbar\in(0,1]}\||\nabla|^{-\frac{13}{10}}\rho_{\mathcal{Q}_\hbar}^\hbar\|_{H^1(\mathbb{R}^3)}\sim\sup_{\hbar\in(0,1]}\|\rho_{\mathcal{Q}_\hbar}^\hbar\|_{L^{\frac{15}{4}}(\mathbb{R}^3)\cap L^{\frac{5}{3}}(\mathbb{R}^3)}<\infty.$$
Hence, we apply the profile decomposition \cite[Proposition 3.1]{HmidiKeraani} to $|\nabla|^{\frac{7}{10}}U_{\hbar_n}=(4\pi)|\nabla|^{-\frac{13}{10}}\rho_{\mathcal{Q}_{\hbar_n}}^{\hbar_n}$. Then, passing to a subsequence,
$$|\nabla|^{\frac{7}{10}}U_{\hbar_n}=\sum_{j=1}^JV^j(\cdot-x_n^j)+r_n^J$$
such that \eqref{profile decomp2} holds, $V^j\in H^1$ and $\limsup_{n\to\infty}\|r_n^J\|_{L^r}=0$ as $J\to\infty$ for all $2<r<6$. Thus, inverting $|\nabla|^{\frac{7}{10}}$, we obtain the profile decomposition \eqref{profile decomp1} with $U^j=|\nabla|^{-\frac{7}{10}}V^j$ and $R_n^J=|\nabla|^{-\frac{7}{10}}r_n^J$. Then, \eqref{profile decomp3} follows by the Hardy-Littlewood-Sobolev and Morrey's inequalities.
\end{proof}

Next, we establish the Pohozaev identities for quantum free energy minimizers.

\begin{lemma}[Pohozaev identities]\label{Pohozaev}
Suppose that $\beta$ satisfies \textup{\textbf{(A1)}$-$\textbf{(A4)}}. Let $\mathcal{Q}_\hbar$ be a minimizer for the quantum variational problem \eqref{quantum VP} of the form \eqref{Q equation for property}. Then, 
$$\begin{aligned}
\textup{Tr}^\hbar\big((-\tfrac{\hbar^2}{2}\Delta)\mathcal{Q}_\hbar\big)&=\frac{1}{3}\left\{\mu_\hbar M+\textup{Tr}^\hbar\big((s\beta')(\mathcal{Q}_\hbar)\big)\right\},\\
\int_{\mathbb{R}^6}\frac{\rho_{\mathcal{Q}_\hbar}^\hbar(x)\rho_{\mathcal{Q}_\hbar}^\hbar(y)}{|x-y|}dxdy&=\frac{4}{3}\left\{\mu_\hbar M+\textup{Tr}^\hbar\big((s\beta')(\mathcal{Q}_\hbar)\big)\right\}.
\end{aligned}$$
\end{lemma}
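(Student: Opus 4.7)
\smallskip

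\noindent\textbf{Plan.} The two identities are derived from two ingredients: (I) a spatial dilation argument exploiting the mass-preserving character of the dilation together with the minimality of $\mathcal{Q}_\hbar$; and (II) the Euler-Lagrange equation \eqref{Q equation for property} paired against $\mathcal{Q}_\hbar$ itself. Throughout, denote $T := \textup{Tr}^\hbar\big((-\tfrac{\hbar^2}{2}\Delta)\mathcal{Q}_\hbar\big)$ and $V := \int_{\mathbb{R}^6}\rho_{\mathcal{Q}_\hbar}^\hbar(x)\rho_{\mathcal{Q}_\hbar}^\hbar(y)/|x-y|\,dxdy$.

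\smallskip

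\noindent\textbf{Step 1 (dilation / virial identity).} For $\lambda>0$, I would define, using the expansion \eqref{eigenfunction expansion of Q},
$$\mathcal{Q}_{\hbar,\lambda}:=\sum_{\mu_j>\mu_\hbar}\lambda_j\,|\phi_j^\lambda\rangle\langle\phi_j^\lambda|,\qquad \phi_j^\lambda(x):=\lambda^{3/2}\phi_j(\lambda x).$$
Since $\{\phi_j^\lambda\}$ remains $L^2$-orthonormal with the same eigenvalues $\{\lambda_j\}$, the functionals $\mathcal{M}^\hbar$ and $\mathcal{C}^\hbar$ are invariant under $\lambda$, while a change of variables yields the scaling laws
$$\textup{Tr}^\hbar\big((-\tfrac{\hbar^2}{2}\Delta)\mathcal{Q}_{\hbar,\lambda}\big)=\lambda^2 T,\qquad \int_{\mathbb{R}^6}\frac{\rho_{\mathcal{Q}_{\hbar,\lambda}}^\hbar(x)\rho_{\mathcal{Q}_{\hbar,\lambda}}^\hbar(y)}{|x-y|}\,dxdy=\lambda V.$$
Hence $\mathcal{Q}_{\hbar,\lambda}\in\mathcal{A}_M^\hbar$ with $\mathcal{J}^\hbar(\mathcal{Q}_{\hbar,\lambda})-\mathcal{J}^\hbar(\mathcal{Q}_\hbar)=(\lambda^2-1)T-\tfrac{\lambda-1}{2}V\ge 0$ for every $\lambda>0$. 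Differentiating at $\lambda=1$ forces the virial identity $V=4T$.

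\smallskip

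\noindent\textbf{Step 2 (Euler-Lagrange pairing).} Assumption \textbf{(A4)} yields $0\le (s\beta')(\mathcal{Q}_\hbar)\le 3\beta(\mathcal{Q}_\hbar)$, so Lemma \ref{uniform boundedness}(ii) guarantees $\textup{Tr}^\hbar((s\beta')(\mathcal{Q}_\hbar))<\infty$. By the spectral relation \eqref{eigenvalues relation}, $\beta'(\lambda_j)=\mu_j-\mu_\hbar$ on the support of $\mathcal{Q}_\hbar$, and functional calculus gives
\begin{align*}
\textup{Tr}^\hbar\big((s\beta')(\mathcal{Q}_\hbar)\big) &=(2\pi\hbar)^3\!\!\sum_{\mu_j>\mu_\hbar}\!\lambda_j(\mu_j-\mu_\hbar)\\
&=-\textup{Tr}^\hbar\big((-\tfrac{\hbar^2}{2}\Delta-U_\hbar)\mathcal{Q}_\hbar\big)-\mu_\hbar M=V-T-\mu_\hbar M,
\end{align*}
so that $\mu_\hbar M+\textup{Tr}^\hbar((s\beta')(\mathcal{Q}_\hbar))=V-T$. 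Substituting $V=4T$ from Step 1 produces $3T=\mu_\hbar M+\textup{Tr}^\hbar((s\beta')(\mathcal{Q}_\hbar))$, and the two Pohozaev identities follow at once.

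\smallskip

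\noindent\textbf{Main obstacle.} The sole technical point is justifying, in Step 1, that $\mathcal{Q}_{\hbar,\lambda}\in\mathcal{A}_M^\hbar$ with finite free energy and that the scaling formulas above hold term-by-term. This reduces to the fact that $\mathcal{Q}_\hbar\in\dot{\mathfrak{H}}^1\cap\mathfrak{S}^\alpha$ for every $\alpha\in[1,\infty]$, which is Lemma \ref{improved uniform boundedness}(i) together with the Lieb-Thirring inequality; these provide the absolute convergence of the kinetic sum $\sum_j\lambda_j\|\nabla\phi_j\|_{L^2}^2$ (which is invariant in structure under the dilation) and the $L^{6/5}$-integrability of $\rho_{\mathcal{Q}_\hbar}^\hbar$ needed to justify the scaling of the Hartree term.
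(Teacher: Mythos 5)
Your proof is correct, but it takes a genuinely different route from the paper. The paper derives the relation between $T := \textup{Tr}^\hbar((-\tfrac{\hbar^2}{2}\Delta)\mathcal{Q}_\hbar)$ and $V := \int\int\rho^\hbar\rho^\hbar/|x-y|$ by pairing the eigenvalue equation $(-\tfrac{\hbar^2}{2}\Delta - U_\hbar + \mu_j)\phi_j = 0$ against $x\cdot\nabla\phi_j$, integrating by parts, and symmetrizing the Coulomb term — a classical Pohozaev computation that uses only the Euler--Lagrange equation and thus applies to any critical point of the form \eqref{Q equation for property}. You instead obtain the virial relation $V = 4T$ by testing the free energy with the mass- and Casimir-preserving dilation $\mathcal{Q}_{\hbar,\lambda}$ and invoking global minimality at $\lambda = 1$; this is shorter and sidesteps the justification of the boundary terms at infinity that the $x\cdot\nabla$ computation implicitly requires, but at the cost of using minimality rather than stationarity alone. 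Your Step 2 is essentially identical to the paper's \eqref{Pohozaev proof 1} combined with \eqref{eigenvalues relation}: trace the Euler--Lagrange relation $\beta'(\lambda_j) = \mu_j - \mu_\hbar$ against $\lambda_j$ to get $\mu_\hbar M + \textup{Tr}^\hbar((s\beta')(\mathcal{Q}_\hbar)) = V - T$. Substituting $V = 4T$ then gives $T = \tfrac{1}{3}W$ and $V = \tfrac{4}{3}W$ as required, in agreement with the paper's conclusion (obtained there by solving the linear system formed by \eqref{Pohozaev proof 1} and \eqref{Pohozaev proof 2}). Your remarks about the admissibility of $\mathcal{Q}_{\hbar,\lambda}$ and the absolute convergence needed to justify the scaling formulas are also the right concerns, and they are covered by Lemma \ref{uniform boundedness} and Lemma \ref{improved uniform boundedness} as you indicate.
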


\begin{proof}
By simple algebras, \eqref{eigenvalue equation} and \eqref{eigenvalues relation}, we obtain
\begin{equation}\label{Pohozaev proof 1}
\begin{aligned}
&\textup{Tr}^\hbar\big((-\tfrac{\hbar^2}{2}\Delta)\mathcal{Q}_\hbar\big)-\textup{Tr}^\hbar(U_\hbar\mathcal{Q}_\hbar)\\
&=\textup{Tr}^\hbar\big((-\tfrac{\hbar^2}{2}\Delta-U_\hbar)\mathcal{Q}_\hbar\big)=\textup{Tr}^\hbar\left(\sum\lambda_j\ket{(-\tfrac{\hbar^2}{2}\Delta-U_\hbar)\phi_j}\bra{\phi_j}\right)\\
&=\textup{Tr}^\hbar\left(\sum\lambda_j\ket{-\mu_j\phi_j}\bra{\phi_j}\right)=-(2\pi\hbar)^3\sum\lambda_j\mu_j.
\end{aligned}
\end{equation}
On the other hand, we have
$$\langle (-\tfrac{\hbar^2}{2}\Delta-U_\hbar+\mu_j)\phi_j, x\cdot\nabla\phi_j\rangle=0.$$
By integration by parts, we calculate 
$$\begin{aligned}
\langle \Delta\phi_j|x\cdot\nabla\phi_j\rangle&=\sum_{k,\ell=1}^3\int_{\mathbb{R}^3} \partial_{x_k}^2\phi_j (x_\ell\partial_{x_\ell}\phi_j)dx\\
&=-\sum_{k,\ell=1}^3\int_{\mathbb{R}^3}\delta_{k\ell}\partial_{x_k}\phi_j \partial_{x_\ell}\phi_jdx-\sum_{k,\ell=1}^3\int_{\mathbb{R}^3} x_\ell \partial_{x_k}\phi_j \partial_{x_\ell}\partial_{x_k}\phi_jdx\\
&=- \int_{\mathbb{R}^3} |\nabla\phi_j|^2 dx-\frac{1}{2}\int_{\mathbb{R}^3} x\cdot\nabla(|\nabla\phi_j|^2) dx=\frac{1}{2} \int_{\mathbb{R}^3} |\nabla\phi_j|^2 dx=\frac{1}{2}\|\nabla\phi_j\|_{L^2}^2.
\end{aligned}$$
Similarly,
$$\langle U_\hbar\phi_j|x\cdot\nabla\phi_j\rangle=\frac{1}{2}\int_{\mathbb{R}^3} U_\hbar \left(x\cdot\nabla(\phi_j^2)\right)dx=-\frac{1}{2}\int_{\mathbb{R}^3} (x\cdot\nabla U_\hbar) \phi_j^2dx-\frac{3}{2}\int_{\mathbb{R}^3} U_\hbar (\phi_j^2)dx,$$
and
$$\langle \phi_j|x\cdot\nabla\phi_j\rangle=\frac{1}{2}\int  _{\mathbb{R}^3}x\cdot\nabla(\phi_j^2) dx=-\frac{3}{2}\int_{\mathbb{R}^3} \phi_j^2 dx=-\frac{3}{2}.$$
Thus, inserting these expressions to the above identity and summing them up, we obtain
$$\begin{aligned}
0&=(2\pi\hbar)^3\sum \lambda_j\langle (-\tfrac{\hbar^2}{2}\Delta-U_\hbar+\mu_j)\phi_j| x\cdot\nabla\phi_j\rangle\\
&=(2\pi\hbar)^3\sum\lambda_j\left\{-\frac{\hbar^2}{4} \|\nabla\phi_j\|_{L^2}^2+\frac{1}{2}\int_{\mathbb{R}^3} (x\cdot\nabla U_\hbar) \phi_j^2dx+\frac{3}{2}\int_{\mathbb{R}^3} U_\hbar (\phi_j^2)dx-\frac{3}{2}\mu_j\right\}\\
&=-\frac{1}{2}\textup{Tr}^\hbar\big((-\tfrac{\hbar^2}{2}\Delta)\mathcal{Q}_\hbar\big)+\frac{1}{2}\int_{\mathbb{R}^3} (x\cdot\nabla U_\hbar) \rho_{\mathcal{Q}_\hbar}^\hbar dx+\frac{3}{2}\int_{\mathbb{R}^3} U_\hbar\rho_{\mathcal{Q}_\hbar}^\hbar dx-\frac{3}{2}(2\pi\hbar)^3\sum \lambda_j\mu_j.
\end{aligned}$$
The second term is simplified by symmetrization, 
$$\begin{aligned}
&\int_{\mathbb{R}^3} (x\cdot\nabla U_\hbar) \rho_{\mathcal{Q}_\hbar}^\hbar dx\\
&=-\int_{\mathbb{R}^3} \left\{x\cdot \int_{\mathbb{R}^3}\frac{x-y}{|x-y|^3}\rho_{\mathcal{Q}_\hbar}^\hbar(y)dy\right\} \rho_{\mathcal{Q}_\hbar}^\hbar(x) dx=-\int_{\mathbb{R}^6} \frac{x\cdot(x-y)}{|x-y|^3} \rho_{\mathcal{Q}_\hbar}^\hbar(x)  \rho_{\mathcal{Q}_\hbar}^\hbar(y)dxdy\\
&=-\frac{1}{2}\int_{\mathbb{R}^6} \frac{x\cdot(x-y)}{|x-y|^3} \rho_{\mathcal{Q}_\hbar}^\hbar(x)  \rho_{\mathcal{Q}_\hbar}^\hbar(y)dxdy -\frac{1}{2}\int_{\mathbb{R}^6} \frac{y\cdot(y-x)}{|x-y|^3} \rho_{\mathcal{Q}_\hbar}^\hbar(x)  \rho_{\mathcal{Q}_\hbar}^\hbar(y)dxdy\\
&=-\frac{1}{2}\int_{\mathbb{R}^6}\frac{\rho_{\mathcal{Q}_\hbar}^\hbar(x)  \rho_{\mathcal{Q}_\hbar}^\hbar(y)}{|x-y|}dxdy=-\frac{1}{2}\int_{\mathbb{R}^3} U_\hbar\rho_{\mathcal{Q}_\hbar}^\hbar dx.
\end{aligned}$$
Thus, it follows that 
\begin{equation}\label{Pohozaev proof 2}
0=-\frac{1}{2}\textup{Tr}^\hbar\big((-\tfrac{\hbar^2}{2}\Delta)\mathcal{Q}_\hbar\big)+\frac{5}{4}\textup{Tr}^\hbar(U_\hbar\mathcal{Q}_\hbar)-\frac{3}{2}(2\pi\hbar)^3\sum \lambda_j\mu_j.
\end{equation}
Note that by \eqref{eigenvalues relation},
$$\begin{aligned}
(2\pi\hbar)^3\sum \lambda_j\mu_j&=(2\pi\hbar)^3\sum \lambda_j(\mu_\hbar+\beta'(\lambda_j))=\mu_\hbar(2\pi\hbar)^3\sum \lambda_j+(2\pi\hbar)^3\sum \lambda_j\beta'(\lambda_j)\\
&=\mu_\hbar \textup{Tr}^\hbar(\mathcal{Q}_\hbar)+\textup{Tr}^\hbar\big((s\beta')(\mathcal{Q}_\hbar)\big)=\mu_\hbar M+\textup{Tr}^\hbar\big((s\beta')(\mathcal{Q}_\hbar)\big).
\end{aligned}$$
Thus, solving \eqref{Pohozaev proof 1} and \eqref{Pohozaev proof 2} for $\textup{Tr}^\hbar\big((-\tfrac{\hbar^2}{2}\Delta)\mathcal{Q}_\hbar\big)$ and $\textup{Tr}^\hbar(U_\hbar\mathcal{Q}_\hbar)$, we prove the lemma.
\end{proof}

As an application of the above Pohozaev identities, we obtain a uniform upper bound for the Lagrange multiplier $-\mu_\hbar$ in \eqref{Q equation for property}.
\begin{corollary}[Lower bound for Lagrange multipliers]\label{Lagrange multiplier bound}
Suppose that $\beta$ satisfies \textup{\textbf{(A1)}$-$\textbf{(A4)}}. Let $\mathcal{Q}_\hbar$ be a minimizer for the variational problem \eqref{quantum VP} of the form \eqref{Q equation for property}. Then, 
$$\sup_{\hbar\in(0,1]}(-\mu_\hbar)<0. $$
\end{corollary}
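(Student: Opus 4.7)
The plan is to rewrite the minimum free energy purely in terms of $\mu_\hbar$ plus a manifestly non-negative Casimir-like term, using the two Pohozaev identities from Lemma \ref{Pohozaev}, and then invoke the strict negativity of $\mathcal{J}_{\min}$ from Proposition \ref{prop: CtoQ} to close the argument.

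First, I would combine the two identities of Lemma \ref{Pohozaev} to express the energy functional as
\[
\mathcal{E}^\hbar(\mathcal{Q}_\hbar)=\tfrac{1}{2}\textup{Tr}^\hbar\bigl((-\hbar^2\Delta)\mathcal{Q}_\hbar\bigr)-\tfrac{1}{2}\int_{\R^6}\frac{\rho^\hbar_{\mathcal{Q}_\hbar}(x)\rho^\hbar_{\mathcal{Q}_\hbar}(y)}{|x-y|}dxdy=-\tfrac{1}{3}\Bigl\{\mu_\hbar M+\textup{Tr}^\hbar\bigl((s\beta')(\mathcal{Q}_\hbar)\bigr)\Bigr\},
\]
so that adding the Casimir contribution yields
\[
\mathcal{J}_{\min}^\hbar=\mathcal{J}^\hbar(\mathcal{Q}_\hbar)=-\tfrac{1}{3}\mu_\hbar M+\textup{Tr}^\hbar\Bigl(\beta(\mathcal{Q}_\hbar)-\tfrac{1}{3}\mathcal{Q}_\hbar\beta'(\mathcal{Q}_\hbar)\Bigr).
\]

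Next, I would use assumption \textbf{(A4)}, namely $s\beta'(s)\leq 3\beta(s)$ for all $s>0$, which gives $\beta(s)-\tfrac{1}{3}s\beta'(s)\geq 0$ pointwise. By the functional calculus this inequality lifts to the operator level, so the trace on the right-hand side above is non-negative. Consequently
\[
\mu_\hbar\geq -\tfrac{3\,\mathcal{J}_{\min}^\hbar}{M}.
\]
Finally, the already-established bound \eqref{min free energy 1} gives $\limsup_{\hbar\to 0}\mathcal{J}_{\min}^\hbar\leq \mathcal{J}_{\min}<0$, so there exists $\hbar_0>0$ such that $\mathcal{J}_{\min}^\hbar\leq \tfrac{1}{2}\mathcal{J}_{\min}<0$ for all $\hbar\in(0,\hbar_0]$, whence $\mu_\hbar\geq -\tfrac{3\mathcal{J}_{\min}}{2M}>0$ uniformly in $\hbar\in(0,\hbar_0]$. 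Since the corollary only concerns the regime in which the quantum minimizer exists (Theorem \ref{thm1} $(i)$), this yields the desired conclusion $\sup_\hbar(-\mu_\hbar)<0$.

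The only subtlety worth mentioning is justifying that \textbf{(A4)} transfers from a scalar inequality to an operator-trace inequality; this follows from the spectral decomposition \eqref{eigenfunction expansion of Q} of $\mathcal{Q}_\hbar$, which reduces the operator trace to the sum $(2\pi\hbar)^3\sum_j\bigl(\beta(\lambda_j)-\tfrac{1}{3}\lambda_j\beta'(\lambda_j)\bigr)$ of non-negative terms. Apart from this, the argument is a short calculation; the real work has already been done in Lemma \ref{Pohozaev} and Proposition \ref{prop: CtoQ}.
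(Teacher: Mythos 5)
Your proposal is correct and takes essentially the same approach as the paper: the paper's proof also combines the two Pohozaev identities from Lemma \ref{Pohozaev} to obtain $\mathcal{J}_{\min}^\hbar = -\tfrac{1}{3}\mu_\hbar M + \textup{Tr}^\hbar\left((\beta-\tfrac{1}{3}s\beta')(\mathcal{Q}_\hbar)\right) \geq -\tfrac{1}{3}\mu_\hbar M$ via \textbf{(A4)}, and then concludes from $\limsup_{\hbar\to0}\mathcal{J}_{\min}^\hbar\leq\mathcal{J}_{\min}<0$. Your explicit remark about reducing the trace inequality to the eigenvalue sum via \eqref{eigenfunction expansion of Q} is a harmless elaboration that the paper leaves implicit.
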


\begin{proof}
It is known that $-\mu_\hbar<0$ for each $\hbar>0$. Thus, it suffices to show that
$$\limsup_{\hbar\to0}(-\mu_\hbar)<0.$$
Indeed, by the Pohozaev identities and Assumption \textup{\textbf{(A4)}}, that is $\beta(s)\geq\frac{1}{3}s\beta'(s)$, we have 
$$\begin{aligned}
\mathcal{J}_{\min}^\hbar&=\textup{Tr}^\hbar\big((-\tfrac{\hbar^2}{2}\Delta)\mathcal{Q}_\hbar\big)-\frac{1}{2}\int_{\mathbb{R}^6}\frac{\rho_{\mathcal{Q}_\hbar}^\hbar(x)\rho_{\mathcal{Q}_\hbar}^\hbar(y)}{|x-y|}dxdy+\textup{Tr}^\hbar\big(\beta(\mathcal{Q})\big)\\
&=-\frac{1}{3}\mu_\hbar M+\textup{Tr}^\hbar\left((\beta-\tfrac{1}{3}(s\beta'))(\mathcal{Q}_\hbar)\right)\\
&\geq -\frac{1}{3}\mu_\hbar M.
\end{aligned}$$
Thus, since $\underset{\hbar\to0}\limsup\mathcal{J}_{\min}^\hbar\leq\mathcal{J}_{\min}<0$ (see \eqref{min free energy 1}), we prove the desired bound.
\end{proof}

\section{Classical distributions from quantum free energy minimizers}

In this section, we establish the opposite inequality to \eqref{min free energy 1}. Precisely, reversing the roles of classical and quantum states in Section \ref{sec: CtoQ}, we employ classical distribution functions made of a quantum free energy minimizer, and show the following proposition.

\begin{proposition}[Classical states from a quantum minimizer]\label{prop: QtoC}
Suppose that $\beta$ satisfies  \textup{\textbf{(A1)}$-$\textbf{(A4)}}. Given a minimizer 
$$\mathcal{Q}_\hbar=\tilde{\beta}\big(-\mu_{\hbar}-(-\tfrac{\hbar^2}{2}\Delta-U_\hbar)\big)$$
for the quantum minimization problem \eqref{quantum VP} with small $\hbar>0$, we construct a distribution function
\begin{equation}\label{CS from quantum minimizer}
f_\hbar(q,p):=\tilde{\beta}\big(-\mu_{\hbar}-(\tfrac{|p|^2}{2}-U_\hbar(q))\big).
\end{equation}
Then, for any sequence $\{\hbar_n\}_{n=1}^\infty$ with $\hbar_n\to0$, there exists a subsequence but still denoted by 
$\{\hbar_n\}_{n=1}^\infty$ such that 
$$\lim_{n\to\infty}\left\{\mathcal{J}(f_{\hbar_n})-\mathcal{J}_{\min}^{\hbar_n}\right\}=0\quad\textup{and}\quad \lim_{n\to\infty}\mathcal{M}(f_{\hbar_n})=M.$$
As a consequence, we have
\begin{equation}\label{min free energy 2}
\liminf_{n\to\infty}\mathcal{J}_{\min}^{\hbar_n}\geq\mathcal{J}_{\min}.
\end{equation}
\end{proposition}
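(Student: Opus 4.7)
The strategy mirrors Section 5 with classical and quantum roles exchanged: starting from the quantum minimizer $\mathcal{Q}_{\hbar_n}$ one constructs the natural classical companion $f_{\hbar_n}$, which shares the same self-consistent potential $-U_{\hbar_n}$ and Lagrange multiplier $-\mu_{\hbar_n}$, then compares their free energies. The essential new difficulty, absent from Section 5, is that $U_{\hbar_n}$ now depends on the small parameter; this is handled through the profile decomposition of Corollary \ref{profile decomp for quantum potential}.

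I first pass to a subsequence. Lemma \ref{improved uniform boundedness}(iii) and Corollary \ref{Lagrange multiplier bound} give that $\mu_{\hbar_n}$ is bounded and bounded away from zero, so up to a subsequence $\mu_{\hbar_n}\to\mu>0$, and a further application of Corollary \ref{profile decomp for quantum potential} yields $U_{\hbar_n}=\sum_{j=1}^J U^j(\cdot-x_n^j)+R_n^J$ with mutually escaping centers and $R_n^J$ small in every $L^r$, $r>15/4$, as $n,J\to\infty$. For the mass ($\alpha=\tilde\beta$), the Casimir functional ($\alpha=\beta\circ\tilde\beta$), and the Hamiltonian moment (a spectral weight of the form $\alpha(t)=(-\mu-t)\tilde\beta(t)$), I combine the layer-cake identity from the proof of Lemma \ref{CtoQ Casimir} with a profile-decomposition version of Weyl's law (established in Section 4) to show that both
\[
\mathrm{Tr}^{\hbar_n}\!\Bigl(\alpha\bigl(-\mu_{\hbar_n}-\hat H_{\hbar_n}\bigr)\Bigr)\quad\text{and}\quad \int_{\R^6}\alpha\bigl(-\mu_{\hbar_n}-(\tfrac{|p|^2}{2}-U_{\hbar_n}(q))\bigr)\,dqdp,
\]
where $\hat H_{\hbar_n}:=-\tfrac{\hbar_n^2}{2}\Delta-U_{\hbar_n}$, tend to the common limit $\sum_j\int\alpha(-\mu-(\tfrac{|p|^2}{2}-U^j(q)))\,dqdp$. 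On the classical side this is transparent once the profile expansion is substituted: the integrands for distinct $j$ have disjoint $q$-supports for large $n$, while the $R_n^J$-contribution vanishes as $J\to\infty$ by its $L^r$-smallness combined with the volume bound of Lemma \ref{classical CLR}.

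The kinetic energies then differ only by a potential-density coupling $\int U_{\hbar_n}(\rho^{\hbar_n}_{\mathcal{Q}_{\hbar_n}}-\rho_{f_{\hbar_n}})\,dx$, which joins the Hartree interaction difference
\[
\iint\frac{\rho_{f_{\hbar_n}}(x)\rho_{f_{\hbar_n}}(y)}{|x-y|}\,dxdy-\iint\frac{\rho^{\hbar_n}_{\mathcal{Q}_{\hbar_n}}(x)\rho^{\hbar_n}_{\mathcal{Q}_{\hbar_n}}(y)}{|x-y|}\,dxdy
\]
as the main obstacle of the proof: the densities are not functions of a single Hamiltonian and Weyl's law does not apply. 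Following Lemma \ref{CtoQ potential energy}, I interpose the T\"oplitz quantization. By Proposition \ref{Toplitz quant convergence} with $V_{\hbar_n}=-U_{\hbar_n}$ (uniformly bounded in $C^{0,1/5}$ by Lemma \ref{improved uniform boundedness}(ii)), $\mathcal{Q}_{\hbar_n}-\mathrm{Op}^T_{\hbar_n}[f_{\hbar_n}]\to 0$ in operator norm; combining this with the Lieb-Thirring inequality (Theorem \ref{LT inequality}) and the uniform kinetic bound of Lemma \ref{uniform boundedness}(ii) upgrades to $L^{5/3}$-smallness of $\rho^{\hbar_n}_{\mathcal{Q}_{\hbar_n}}-\rho^{\hbar_n}_{\mathrm{Op}^T_{\hbar_n}[f_{\hbar_n}]}$, while Appendix A gives closeness of $\rho^{\hbar_n}_{\mathrm{Op}^T_{\hbar_n}[f_{\hbar_n}]}$ to $\rho_{f_{\hbar_n}}$. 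Interpolation with $L^1$ plus Hardy-Littlewood-Sobolev (using the uniform $L^\infty$ bound on $U_{\hbar_n}$ for the mixed term) closes both estimates.

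Assembling these pieces yields $\mathcal{J}(f_{\hbar_n})-\mathcal{J}^{\hbar_n}_{\min}\to 0$ together with $\mathcal{M}(f_{\hbar_n})\to M$. For the concluding inequality $\liminf_n\mathcal{J}^{\hbar_n}_{\min}\geq\mathcal{J}_{\min}$, I renormalize $g_n:=(M/\mathcal{M}(f_{\hbar_n}))\,f_{\hbar_n}\in\mathcal{A}_M$: the scalar factor is $1+o(1)$ and the uniform bounds on $f_{\hbar_n}$ obtained above give $\mathcal{J}(g_n)=\mathcal{J}(f_{\hbar_n})+o(1)$, so that $\mathcal{J}^{\hbar_n}_{\min}=\mathcal{J}(f_{\hbar_n})+o(1)\geq\mathcal{J}(g_n)-o(1)\geq\mathcal{J}_{\min}-o(1)$, as desired.
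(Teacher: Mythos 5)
Your overall architecture matches the paper's: pass to a subsequence so $\mu_{\hbar_n}$ converges, use the profile decomposition of $U_{\hbar_n}$ together with the profile version of Weyl's law (Proposition \ref{Weyl's law for profile}) to handle mass, Casimir, and the Hamiltonian moment, then reduce the remaining energy comparison to the two Hartree interaction differences \eqref{eq: CtoQ potential energy convergence}, and finally push these through the T\"oplitz quantization with the Lieb--Thirring inequality as in Lemma \ref{CtoQ potential energy}. Your concluding renormalization $g_n=(M/\mathcal{M}(f_{\hbar_n}))f_{\hbar_n}$ to deduce $\liminf_n\mathcal{J}^{\hbar_n}_{\min}\geq\mathcal{J}_{\min}$ is a welcome addition: the paper leaves this consequence implicit.

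However, there is a genuine gap in the interaction estimate, and it is precisely the point the paper flags as the new difficulty of the QtoC direction. You claim that ``Appendix A gives closeness of $\rho^{\hbar_n}_{\mathrm{Op}^T_{\hbar_n}[f_{\hbar_n}]}$ to $\rho_{f_{\hbar_n}}$'' and that ``interpolation with $L^1$ plus Hardy--Littlewood--Sobolev closes both estimates.'' But the approximation-of-identity statement in Appendix~\ref{appendix: Toplitz} (Proposition \ref{TQ-conserv}) concerns a \emph{fixed} distribution $f$, and $\rho^\hbar_{\mathrm{Op}^T_\hbar[f]}=G^3_{\hbar/2}*\rho_f$ only converges to $\rho_f$ when $\rho_f$ does not itself move with $\hbar$. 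Here $f_{\hbar_n}$ and hence $\rho_{f_{\hbar_n}}$ vary with the semiclassical parameter; uniform $L^1\cap L^{5/3}$ bounds on $\rho_{f_{\hbar_n}}$ give only boundedness of $G^3_{\hbar_n/2}*\rho_{f_{\hbar_n}}-\rho_{f_{\hbar_n}}$, not smallness, so the interpolation/HLS step does not close. The paper circumvents this by estimating the relevant interaction integral directly on the Fourier side: by Plancherel, the difference equals a quantity controlled by $\bigl\|\tfrac{e^{-\hbar|\xi|^2}-1}{|\xi|^2}\bigr\|_{L^5}\sim\hbar^{7/10}$ times $\|\rho_{f_\hbar}\|_{L^{5/3}}\|\rho_{\mathcal{Q}_\hbar}^\hbar+\rho_{f_\hbar}\|_{L^{5/3}}$, which is uniformly bounded. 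Without some such quantitative argument, the step from ``the densities are individually bounded'' to ``the Gaussian-mollification error vanishes'' is unsupported. A minor secondary point: the layer-cake argument requires the test weight to be non-negative, so one should take $\alpha(s)=s\tilde\beta(s)$ and recover the Hamiltonian moment by combining with the mass identity, rather than feeding the sign-changing $\alpha(t)=(-\mu-t)\tilde\beta(t)$ directly into the layer-cake formula.
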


Before giving a proof, we would like to point out a technical difficulty in proving the proposition. Now, the potential function $-U_\hbar$ depends on $\hbar$. It may vary in the semi-classical limit $\hbar\to0$. Thus, if one attempts to repeat the same argument in Section \ref{sec: CtoQ}, one would immediately encounter a problem in application of Weyl's law (Theorem \ref{Weyl's law}).

In order to overcome this subtle issue, we employ the profile decomposition. Indeed, given a sequence $\{\hbar_n\}_{n=1}^\infty$ with $\hbar_n\to 0$, passing to a subsequence, a sequence of potential functions $\{-U_{\hbar_n}\}_{n=1}^\infty$ has a profile decomposition (Corollary \ref{profile decomp for quantum potential}). The following theorem asserts that Weyl's law still holds for such a sequence of potential functions.

\begin{proposition}[Weyl's law for profile decompositions]\label{Weyl's law for profile}
Let $\mu>0$ and $r>\frac{3}{2}$. Suppose that a sequence of non-negative potential functions $\{U_{\hbar_n}\}_{n=1}^\infty\subset L^r(\mathbb{R}^3)$ can be written as 
\begin{equation}\label{Weyl's law profile decomposition}
U_{\hbar_n}=\sum_{j=1}^J U^j(\cdot-x_n^j)+R_\hbar^J
\end{equation}
such that $U^j\in L^r(\mathbb{R}^3)$, $|x_n^j-x_n^{j'}|\to\infty$ for $j\neq j'$, and
$$\lim_{J\to\infty}\limsup_{n\to\infty}\|R_n^J\|_{L^r}=0.$$
Then, for all $E\geq\mu$, we have
\begin{equation}\label{eq: Weyl's law for profiles}
\begin{aligned}
\lim_{n\to\infty}(2\pi\hbar_n)^3N(-E,\hbar_n, -U_{\hbar_n})&-\Big|\Big\{(q,p)\in\mathbb{R}^6:\ \tfrac{|p|^2}{2}-U_{\hbar_n}(q)\leq-E\Big\}\Big|=0.
\end{aligned}
\end{equation}
\end{proposition}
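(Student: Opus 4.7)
The plan is to prove \eqref{eq: Weyl's law for profiles} by separately decomposing the quantum eigenvalue count $(2\pi\hbar_n)^3N(-E,\hbar_n,-U_{\hbar_n})$ and the classical phase-space volume according to the profile expansion \eqref{Weyl's law profile decomposition}, and then sending $J\to\infty$. The heuristic is that the divergent separation $|x_n^j-x_n^{j'}|\to\infty$ makes each profile $U^j$ contribute independently to both quantities, while the remainder $R_n^J$ produces uniformly small errors controlled by the CLR-type bounds in Lemma \ref{CLR bound} and Lemma \ref{classical CLR}.

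On the classical side, I would use the elementary formula $|\{\tfrac{|p|^2}{2}-V(q)\leq -E\}|=\tfrac{4\sqrt{2}\pi}{3}\int (V(q)-E)_+^{3/2}dq$ together with asymptotic spatial orthogonality. Choosing balls $B(x_n^j,R_n)$ with radii $R_n\to\infty$ slowly enough that they remain mutually disjoint, one has $\|U^{j'}(\cdot-x_n^{j'})\mathbf{1}_{B(x_n^j,R_n)}\|_{L^r}\to 0$ for each $j'\neq j$ (because $U^{j'}\in L^r$ and the translated balls run off to infinity). A local Lipschitz estimate on $t\mapsto (t-E)_+^{3/2}$ together with Lemma \ref{classical CLR} applied to the small $L^r$ cross terms and to $R_n^J$ yields
\[
\bigl|\{\tfrac{|p|^2}{2}-U_{\hbar_n}(q)\leq -E\}\bigr|=\sum_{j=1}^{J}\bigl|\{\tfrac{|p|^2}{2}-U^j(q)\leq -E\}\bigr|+o_n(1)+O\!\left(\|R_n^J\|_{L^r}^r/E^{r-3/2}\right).
\]

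On the quantum side, I would apply the spectral subadditivity \eqref{eigenvalue inequality} to the splitting
\[
-\tfrac{\hbar_n^2}{2}\Delta-U_{\hbar_n}+E=\Bigl[-\tfrac{(1-\delta)\hbar_n^2}{2}\Delta-\textstyle\sum_{j=1}^{J}U^j(\cdot-x_n^j)+(E-\delta)\Bigr]+\Bigl[-\tfrac{\delta\hbar_n^2}{2}\Delta-R_n^J+\delta\Bigr],
\]
so that via Lemma \ref{CLR bound} the remainder piece contributes $\lesssim \|R_n^J\|_{L^r}^r/\delta^r$ after the $(2\pi\hbar_n)^3$ rescaling. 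For the first piece, a Schr\"odinger operator with $J$ spatially separated wells, I would perform IMS-type localization with smooth cutoffs $\chi_j$ of disjoint support concentrated near each $x_n^j$: the localized operators $\chi_j[-\tfrac{(1-\delta)\hbar_n^2}{2}\Delta-U^j(\cdot-x_n^j)]\chi_j$ act on essentially orthogonal sectors, their eigenvalue counts are (after scaling $\hbar_n$ by $\sqrt{1-\delta}$ and translating) given by Theorem \ref{Weyl's law} applied to each single profile at level $E-\delta$, and the cutoff error $\tfrac{\hbar_n^2}{2}\sum_j|\nabla\chi_j|^2$ from the IMS formula is negligible after the $(2\pi\hbar_n)^3$ rescaling.

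Collecting both decompositions, the difference on the left-hand side of \eqref{eq: Weyl's law for profiles} is $o_n(1)+O(\|R_n^J\|_{L^r}^r/E^{r-3/2})+o_\delta(1)$. Sending $n\to\infty$, then $\delta\to 0$, and finally $J\to\infty$---using $\limsup_n\|R_n^J\|_{L^r}\to 0$ together with $E\geq\mu>0$ to keep $E^{r-3/2}$ bounded below---completes the proof. The main obstacle is the quantum decomposition: because Schr\"odinger spectra are not additive, the IMS cutoffs must be chosen large enough to contain the bulk of each profile's bound states yet small enough to preserve the asymptotic orthogonality of sectors, while the splitting parameter $\delta$ must be tracked carefully through the $(1-\delta)$-rescaling of $\hbar$ when applying Theorem \ref{Weyl's law}; ensuring that all three sources of error---the spectral splitting $\delta$, the cutoff scale $R_n$, and the profile truncation $J$---vanish in the correct order of limits is the technical heart of the argument.
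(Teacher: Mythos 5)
Your overall strategy matches the paper's: split both the quantum count and the classical volume along the profile decomposition, control the remainder with the CLR bounds (Lemma~\ref{CLR bound} and Lemma~\ref{classical CLR}), handle the main part by Weyl's law, and juggle the parameters $n,\delta,J$. However, the handling of the multi-well main term differs, and there is a bookkeeping error in your limit order that would need to be fixed.

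\emph{Different technical device for the main term.} You propose IMS localization with slowly-growing cutoffs $\chi_j$ around each $x_n^j$, accepting the error $\tfrac{\hbar_n^2}{2}\sum_j|\nabla\chi_j|^2$ and the leakage of mass of $U^j$ outside the cutoff ball. The paper instead first approximates each profile $U^j$ by a compactly supported smooth $\tilde U^j$ with $\|U^j-\tilde U^j\|_{L^r}\leq \epsilon/J$, absorbs these tails into a modified remainder $\tilde R_n^J$, and then uses that the supports of $\tilde U^j(\cdot-x_n^j)$ are literally disjoint for large $n$; the classical volume for $\tilde U_n=\sum_j\tilde U^j(\cdot-x_n^j)$ is then independent of large $n$, and Theorem~\ref{Weyl's law} is invoked directly (combined with Lemma~\ref{classical CLR perturbation} to step from level $-E+\delta$ down to $-E-\delta$). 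Your IMS route is plausible but would force you to track both a cutoff radius $R_n\to\infty$ and the mass of $U^j$ outside $B(0,R_n)$; the compactly-supported-approximation route avoids these complications entirely and makes the disjointness exact.

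\emph{Gap in the order of limits.} You write the accumulated error as $o_n(1)+O(\|R_n^J\|_{L^r}^r/E^{r-3/2})+o_\delta(1)$ and then send $n\to\infty$, $\delta\to0$, $J\to\infty$. This is not correct: the quantum remainder estimate (Lemma~\ref{CLR bound} applied to the shifted piece $-\tfrac{\delta\hbar_n^2}{2}\Delta-R_n^J+\delta$, i.e.\ to $N(-1,\hbar_n,-R_n^J/\delta)$) produces an error of size $O(\|R_n^J\|_{L^r}^r/\delta^r)$, not $O(\|R_n^J\|_{L^r}^r/E^{r-3/2})$. With the correct error, sending $\delta\to0$ \emph{before} taking $J\to\infty$ blows up the $\delta^{-r}$ factor while the numerator remains bounded away from zero. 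The correct order is to send $n\to\infty$, then $J\to\infty$ (so the profile-truncation error $\limsup_n\|R_n^J\|_{L^r}\to0$), and only then $\delta\to0$; equivalently, as the paper does, fix $\delta$, choose the approximation parameter small enough relative to $\delta$, and let it go to zero before $\delta$.
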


\begin{lemma}\label{classical CLR perturbation}
Suppose that $V\in L^r(\mathbb{R}^3)$ for some $r\geq\frac{3}{2}$. Then, we have
$$\Big|\Big\{(q,p)\in\mathbb{R}^6: \left|\tfrac{1}{2}|p|^2+V(q)+E\right|\leq\delta\Big\}\Big|\lesssim\delta E^{-(r-\frac{1}{2})}\|V_-\|_{L^r}^r$$
for $E>0$ and $\delta\in(0,\frac{E}{2}]$.
\end{lemma}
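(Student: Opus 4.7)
The plan is to slice the phase-space set in the $q$-variable and use an elementary spherical shell estimate together with a Chebyshev-type truncation on $V_-$.

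First, for each fixed $q$, I would describe the $p$-section of $\{|H+E|\leq\delta\}$ where $H=\tfrac12|p|^2+V(q)$. It is
$$\Big\{p\in\mathbb{R}^3: \tfrac12|p|^2\in[\alpha(q),\beta(q)]\Big\},\qquad \alpha(q):=\max\{0,-V(q)-E-\delta\},\ \beta(q):=-V(q)-E+\delta,$$
which is nonempty precisely when $\beta(q)\geq 0$, i.e.\ when $V_-(q)\geq E-\delta\geq E/2$ (using $\delta\leq E/2$). The Euclidean volume of this spherical shell equals $\tfrac{4\sqrt2\pi}{3}\bigl(\beta(q)^{3/2}-\alpha(q)^{3/2}\bigr)$. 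The key elementary estimate to establish is
$$\beta(q)^{3/2}-\alpha(q)^{3/2}\lesssim \delta\,\sqrt{V_-(q)}\qquad\text{on }\{V_-(q)\geq E/2\}.$$
If $\alpha(q)>0$, then $\beta(q)-\alpha(q)=2\delta$ and the mean-value theorem for $x\mapsto x^{3/2}$ gives the bound $\lesssim \delta\sqrt{\beta(q)}\leq\delta\sqrt{V_-(q)}$. If $\alpha(q)=0$, then $\beta(q)\leq 2\delta$, so $\beta(q)^{3/2}=\beta(q)\sqrt{\beta(q)}\leq 2\delta\sqrt{V_-(q)}$ since $\beta(q)\leq V_-(q)$ (which uses $\delta\leq E$). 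In either case the claim holds.

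Integrating this in $q$ over the $q$-support $\{V_-\geq E-\delta\}\subseteq\{V_-\geq E/2\}$, I would bound
$$\Big|\Big\{(q,p): |H+E|\leq\delta\Big\}\Big|\lesssim \delta\int_{\{V_-\geq E/2\}}\sqrt{V_-(q)}\,dq.$$
The last step is a truncated Chebyshev-type inequality: on $\{V_-\geq E/2\}$ one has
$$V_-(q)^{1/2}=V_-(q)^{r}\cdot V_-(q)^{1/2-r}\leq (E/2)^{1/2-r}V_-(q)^{r},$$
since $1/2-r\leq 0$. Integrating yields $\int_{\{V_-\geq E/2\}}V_-^{1/2}\,dq\leq(E/2)^{1/2-r}\|V_-\|_{L^r}^r$, which combined with the previous display gives the claimed estimate $\lesssim\delta\,E^{-(r-1/2)}\|V_-\|_{L^r}^r$.

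There is no real obstacle here; the only small subtlety is handling the two cases $\alpha(q)=0$ and $\alpha(q)>0$ uniformly so that one gets the sharp power $\sqrt{V_-(q)}$ (rather than a weaker $V_-(q)^{3/2}$-bound that would fail to integrate against $\|V_-\|_{L^r}^r$). Once that is done, the rest is just H\"older/Chebyshev on the superlevel set $\{V_-\geq E/2\}$.
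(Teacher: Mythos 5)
Your proof is correct and takes essentially the same approach as the paper: both integrate the volume of the $p$-section (a spherical shell) over $q$, split into the case where the inner radius vanishes versus not (the paper's split into $E-\delta\leq\tilde V\leq E+\delta$ and $\tilde V\geq E+\delta$ after truncating with $\tilde V=\max\{-V,E/2\}$), obtain the key bound $\beta^{3/2}-\alpha^{3/2}\lesssim\delta\sqrt{V_-}$ via a mean-value/fundamental-theorem estimate, and finish with the Chebyshev-type inequality $V_-^{1/2}\leq(E/2)^{1/2-r}V_-^{r}$ on the superlevel set $\{V_-\geq E/2\}$.
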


\begin{proof}
Let $\tilde{V}(x):=\max\{-V(x), \frac{E}{2}\}$. Then, $|\tfrac{1}{2}|p|^2+V(q)+E|\leq \delta$ if and only if $||p|^2-\tilde{V}(q)+E\big|\leq \delta$, because $0<\delta\leq\frac{E}{2}$. Hence, we have
$$\begin{aligned}
&\Big\{(q,p): \left|\tfrac{|p|^2}{2}+V(q)+E\right|\leq \delta\Big\}=\Big\{(q,p): \big|\tfrac{|p|^2}{2}-\tilde{V}(q)+E\big|\leq \delta\Big\}\\
&=\Big\{(q,p): \tfrac{|p|^2}{2}-\tilde{V}(q)\leq-E+\delta\Big\}-\Big\{(q,p): \tfrac{|p|^2}{2}-\tilde{V}(q)\leq-E-\delta\Big\}.
\end{aligned}$$
It implies that 
$$\begin{aligned}
&\Big|\Big\{(q,p): \left|\tfrac{|p|^2}{2}+V(q)+E\right|\leq \delta\Big\}\Big|\\
&=\frac{8\sqrt{2}\pi}{3}\int_{\tilde{V}(x)\geq E-\delta} (\tilde{V}(x)-E+\delta)^{\frac{3}{2}}dx-\frac{8\sqrt{2}\pi}{3}\int_{\tilde{V}(x)\geq E+\delta} (\tilde{V}(x)-E-\delta)^{\frac{3}{2}}dx\\
&=\frac{8\sqrt{2}\pi}{3}\int_{E-\delta\leq \tilde{V}(x)\leq E+\delta} (\tilde{V}(x)-E+\delta)^{\frac{3}{2}}dx\\
&\quad+\frac{8\sqrt{2}\pi}{3}\int_{\tilde{V}(x)\geq E+\delta} \left\{(\tilde{V}(x)-E+\delta)^{\frac{3}{2}}-(\tilde{V}(x)-E-\delta)^{\frac{3}{2}}\right\}dx.
\end{aligned}$$
In the first integral,
$$0\leq (\tilde{V}(x)-E+\delta)^{\frac{3}{2}}\leq 2\delta(\tilde{V}(x)-E+\delta)^{\frac{1}{2}}\leq 2\delta\tilde{V}(x)^{\frac{1}{2}} \leq 2\delta (\tfrac{E}{2})^{-(r-\frac{1}{2})}\tilde{V}(x)^r,$$
while in the second integral, by the fundamental theorem of calculus, 
$$\begin{aligned}
&(\tilde{V}(x)-E+\delta)^{\frac{3}{2}}-(\tilde{V}(x)-E-\delta)^{\frac{3}{2}}\\
&=\int_{-1}^1\frac{d}{dt}\left\{(\tilde{V}(x)-E+t\delta)^{\frac{3}{2}} \right\}dt=\int_{-1}^1\frac{3\delta}{2}(\tilde{V}(x)-E+t\delta)^{\frac{1}{2}}dt\\
&\leq 3\delta (\tilde{V}(x)-E+\delta)^{\frac{1}{2}}\leq 3\delta (\tfrac{E}{2})^{-(r-\frac{1}{2})}\tilde{V}(x)^r.
\end{aligned}$$
Thus, we prove that 
$$\Big|\Big\{(q,p): \left|\tfrac{1}{2}|p|^2+V(q)+E\right|\leq \delta\Big\}\Big|\lesssim \delta E^{-(r-\frac{1}{2})}\int_{\tilde{V}(x)\geq E-\delta} \tilde{V}(x)^r dx\leq \delta E^{-(r-\frac{1}{2})}\|V\|_{L^r}^r.$$
\end{proof}

\begin{proof}[Proof of Proposition \ref{Weyl's law for profile}]
Given arbitrarily small $\epsilon>0$, we take $J=J(\epsilon)\geq1$ and $\{\tilde{U}^j\}_{j=1}^J\subset C_c^\infty$ such that
$\limsup_{n\to\infty}\|R_n^J\|_{L^r}\leq\epsilon$ and $\|\tilde{U}^j-U^j\|_{L^r}\leq\frac{\epsilon}{J}$, and we modify the profile decomposition \eqref{Weyl's law profile decomposition} as
\begin{equation}\label{Weyl's law profile decomposition'}
U_{\hbar_n}=\tilde{U}_n+\tilde{R}_n^J,
\end{equation}
where
$\tilde{U}_n:=\sum_{j=1}^J\tilde{U}^j(\cdot-x_n^j)$ and $\tilde{R}_n^J:=R_n^J+\sum_{j=1}^J(U^j-\tilde{U}^j)(\cdot-x_n^j)$
and
$$\limsup_{n\to\infty}\|\tilde{R}_n^J\|_{L^r}\leq2\epsilon.$$
Let $\delta\in(0,\frac{\mu}{2}]$. Repeating \eqref{L^r Weyl's law - one direction}, we write 

\begin{equation}\label{eq: Weyl's law profile proof}
\begin{aligned}
N(-E,\hbar_n,-U_{\hbar_n})&\leq\textup{dim}\Big\{\textup{Ran}\ \mathbf{1}_{(-\infty,0)}(-\tfrac{(1-\delta)\hbar_n^2}{2}\Delta-\tilde{U}_n+E-\delta)\Big\}\\
&\quad+\textup{dim}\Big\{\textup{Ran}\ \mathbf{1}_{(-\infty,0)}(-\tfrac{\delta\hbar_n^2}{2}\Delta+\delta-\tilde{R}_n^J)\Big\} \\
&=\textup{dim}\Big\{\textup{Ran}\ \mathbf{1}_{(-\infty,-\frac{E-\delta}{1-\delta})}(-\tfrac{\hbar_n^2}{2}\Delta-\tfrac{1}{1-\delta}\tilde{U}_n)\Big\}\\
&\quad+\textup{dim}\Big\{\textup{Ran}\ \mathbf{1}_{(-\infty,-1)}(-\tfrac{\hbar_n^2}{2}\Delta-\tfrac{1}{\delta}\tilde{R}_n^J)\Big\} \\
&= N\Big(-\tfrac{E-\delta}{1-\delta},\hbar_n,-\tfrac{1}{1-\delta}\tilde{U}_n\Big)+N\Big(-1, \hbar_n, -\tfrac{1}{\delta}\tilde{R}_n^J\Big).
\end{aligned}
\end{equation}
For the second term one the right hand side of \eqref{eq: Weyl's law profile proof}, we apply Lemma \ref{CLR bound},
$$\limsup_{n\to\infty}(2\pi\hbar_n)^3N\left(-1, \hbar_n, -\tfrac{1}{\delta}\tilde{R}_n^J\right)=O\left(\frac{1}{\delta^r}\right)\limsup_{n\to\infty}\|\tilde{R}_n^J\|_{L^r}^r=O\left(\frac{\epsilon^r}{\delta^r}\right).$$
For the first term, Theorem \ref{Weyl's law} yields
$$\begin{aligned}
&(1-\delta)^{\frac{3}{2}}\limsup_{n\to\infty}(2\pi\hbar_n)^3N\left(-\tfrac{E-\delta}{1-\delta},\hbar_n, -\tfrac{1}{1-\delta}\tilde{U}_n\right)\\
&=(1-\delta)^{\frac{3}{2}}\Big|\Big\{(q,p): \tfrac{|p|^2}{2}-\tfrac{1}{1-\delta}\tilde{U}_n(q)\leq-\tfrac{E-\delta}{1-\delta}\Big\}\Big|\\
&=\Big|\Big\{(q,p): \tfrac{|p|^2}{2}-\tilde{U}_n(q)\leq-E+\delta\Big\}\Big|\\
&=\Big|\Big\{(q,p): \tfrac{|p|^2}{2}-\tilde{U}_n(q)\leq-E-\delta\Big\}\Big|+\Big|\Big\{(q,p): |\tfrac{|p|^2}{2}-\tilde{U}_n(q)+E|\leq \delta\Big\}\Big|
\end{aligned}$$
for sufficiently large $n$. Here, when Theorem \ref{Weyl's law} is applied, we used that the measure of the set $\{(q,p): \frac{|p|^2}{2}-\tfrac{1}{1-\delta}\tilde{U}_n(q)\leq-\tfrac{E-\delta}{1-\delta}\}$ does not depend on large $n$, because the supports of $\tilde{U}^j(\cdot-x_n^j)$ are mutually disjoint. Thus, it follows from Lemma \ref{classical CLR perturbation} that 
$$\begin{aligned}
&(1-\delta)^{\frac{3}{2}}\limsup_{n\to\infty}(2\pi\hbar_n)^3N\left(-\tfrac{E-\delta}{1-\delta},\hbar_n, -\tfrac{1}{1-\delta}\tilde{U}_n\right)\\
&=\left|\left\{(q,p): \tfrac{|p|^2}{2}-\tilde{U}_n(q)\leq-(E+\delta)\right\}\right|+O\left(\delta\right)\|\tilde{U}_n\|_{L^r}^r.
\end{aligned}$$
Therefore, coming back to \eqref{eq: Weyl's law profile proof}, we obtain 
\begin{equation}\label{eq: Weyl's law profile proof2}\begin{aligned}
&\limsup_{n\to\infty}(2\pi\hbar_n)^3N(-E,\hbar_n, -U_\hbar)\\
&\leq \frac{1}{(1-\delta)^{\frac{3}{2}}}\left|\left\{(q,p): \tfrac{|p|^2}{2}-\tilde{U}_n(q)\leq-(E+\delta)\right\}\right|+O(\delta)+O\left(\frac{\epsilon^r}{\delta^r}\right).
\end{aligned}
\end{equation}
Next, we aim to replace $\tilde{U}_n$ by $U_{\hbar_n}$ in the bound \eqref{eq: Weyl's law profile proof2}. To this end,   we write
$$\begin{aligned}
&\Big|\Big\{(q,p):\ \tfrac{|p|^2}{2}-\tilde{U}_n(q)\leq-(E+\delta)\Big\}\Big|\\
&=(1+\delta)^{\frac{3}{2}}\Big|\Big\{(q,p):\ \tfrac{1+\delta}{2}|p|^2-\tilde{U}_n(q)\leq-(E+\delta)\Big\}\Big|\\
&\leq(1+\delta)^{\frac{3}{2}}\Big|\Big\{(q,p):\ \tfrac{|p|^2}{2}-U_{\hbar_n}(q)\leq -E\Big\}\Big|+(1+\delta)^{\frac{3}{2}}\Big|\Big\{(q,p):\ \tfrac{\delta}{2}|p|^2-\tilde{R}_n^J(q)\leq-\delta\Big\}\Big|\\
&=(1+\delta)^{\frac{3}{2}}\Big|\Big\{(q,p):\ \tfrac{|p|^2}{2}-U_{\hbar_n}(q)\leq -E\Big\}\Big|+(1+\delta)^{\frac{3}{2}}\Big|\Big\{(q,p):\ 1+\tfrac{|p|^2}{2}\leq\tfrac{1}{\delta}\tilde{R}_n^J(q)\Big\}\Big|.
\end{aligned}$$
Then, applying Lemma \ref{classical CLR} to the second term, we obtain
$$\begin{aligned}
&\Big|\Big\{(q,p):\ \tfrac{|p|^2}{2}-\tilde{U}_n(q)\leq-(E+\delta)\Big\}\Big|\\
&\lesssim (1+\delta)^{\frac{3}{2}}\Big|\Big\{(q,p):\ \tfrac{|p|^2}{2}-U_{\hbar_n}(q)\leq -E\Big\}\Big|+O\left(\frac{\epsilon^r}{\delta^r}\right).
\end{aligned}$$
Combining this inequality with \eqref{eq: Weyl's law profile proof2}, we obtain
$$\begin{aligned}
&\limsup_{n\to\infty}(2\pi\hbar_n)^3N(-E,\hbar_n, -U_{\hbar_n})\\
&\leq \left(\frac{1+\delta}{1-\delta}\right)^{\frac{3}{2}}\left|\left\{(q,p):\ \tfrac{|p|^2}{2}-U_{\hbar_n}(q)\leq -E\right\}\right|+O\left(\frac{\epsilon^r}{\delta^r}\right)+O(\delta)
\end{aligned}$$
for all sufficiently large $n$. As a consequence, we have
$$\begin{aligned}
&\limsup_{n\to\infty}(2\pi\hbar_n)^3N(-E,\hbar_n, -U_{\hbar_n})\\
&\leq \left(\frac{1+\delta}{1-\delta}\right)^{\frac{3}{2}}\liminf_{n\to\infty}\left|\left\{(q,p):\ \tfrac{|p|^2}{2}-U_{\hbar_n}(q)\leq -E\right\}\right|+O\left(\frac{\epsilon^r}{\delta^r}\right)+O(\delta).
\end{aligned}$$
Note that the bound is independent of large $n$. Therefore, sending $\epsilon\to 0$ and then $\delta\to 0$, we prove that 
$$\limsup_{n\to\infty}(2\pi\hbar_n)^3N(-E,\hbar_n, -U_{\hbar_n})\leq \liminf_{n\to\infty}\left|\left\{(q,p):\ \tfrac{|p|^2}{2}-U_{\hbar_n}(q)\leq -E\right\}\right|.$$
The opposite inequality, that is,
$$\liminf_{n\to\infty}(2\pi\hbar_n)^3N(-E,\hbar_n, -U_{\hbar_n})\geq\limsup_{n\to\infty}\left|\left\{(q,p):\ \tfrac{|p|^2}{2}-U_{\hbar_n}(q)\leq -E\right\}\right|,$$
can be proved similarly. 
\end{proof}

Now, we are ready to prove Proposition \ref{prop: QtoC}.
\begin{proof}[Proof of Proposition \ref{prop: QtoC}]
The proof closely follows from that of Proposition \ref{prop: CtoQ}. Indeed, it suffices to show convergences analogous to those in Lemma \ref{CtoQ Casimir} and \ref{CtoQ potential energy} such that $\gamma_\hbar=\tilde{\beta}(-\mu-(-\tfrac{\hbar^2}{2}\Delta-U))$ and $\mathcal{Q}=\tilde{\beta}(-\mu-(\tfrac{|p|^2}{2}-U(q)))$ are replaced by $\mathcal{Q}_\hbar=\tilde{\beta}(-\mu_{\hbar}-(-\tfrac{\hbar^2}{2}\Delta-U_\hbar))$ and $f_\hbar=\tilde{\beta}(-\mu_{\hbar}-(\tfrac{|p|^2}{2}-U_\hbar(q)))$, respectively.

First we note that, passing to a subsequence, we may assume that $\mu_{\hbar_n}\to \tilde\mu<0$ as $n\to \infty$. Next, repeating the proof of Lemma \ref{CtoQ Casimir}, we prove that 
$$\lim_{n\to\infty}\mathcal{M}(f_{\hbar_n})=M\quad\textup{and}\quad\lim_{n\to\infty}\mathcal{C}(f_{\hbar_n})-\mathcal{C}_{\hbar_n}(\mathcal{Q}_{\hbar_n})=0$$
and
$$\lim_{n\to \infty}\iint\big(\tfrac{|p|^2}{2}-U_{\hbar_n}(q)\big)f_{\hbar_n}(q,p)dqdp-\textup{Tr}^{\hbar_n}\big((-\tfrac{\hbar_n^2}{2}\Delta-U_{\hbar_n})\mathcal{Q}_{\hbar_n}\big)=0.$$
Note that here, Proposition \ref{Weyl's law for profile} should be employed instead of Theorem \ref{Weyl's law}, because the potential function $-U_{\hbar_n}$ is also depends on the parameter $\hbar_n$. As a consequence, the interpolation estimate (Theorem \ref{interpolation inequality}) deduces the uniform bound
$$\sup_{\hbar\in(0,1]}\|\rho_{f_\hbar}\|_{L^1\cap L^{\frac{5}{3}}}<\infty.$$

It remains to show that
\begin{align}
\lim_{\hbar\to0}\int_{\mathbb{R}^6}\frac{\rho_{f_\hbar}(x)\rho_{f_\hbar}(y)}{|x-y|}dxdy-\int_{\mathbb{R}^6}\frac{\rho_{\mathcal{Q}_\hbar}^\hbar(x)\rho_{\mathcal{Q}_\hbar}^\hbar(y)}{|x-y|}dxdy&=0,\label{eq: CtoQ potential energy convergence}\\
\lim_{\hbar\to0}\int_{\mathbb{R}^6}\frac{\rho_{f_\hbar}(x)\rho_{\mathcal{Q}_\hbar}^\hbar(y)}{|x-y|}dxdy-\int_{\mathbb{R}^6}\frac{\rho_{\mathcal{Q}_\hbar}^\hbar(x)\rho_{\mathcal{Q}_\hbar}^\hbar(y)}{|x-y|}dxdy&=0,\nonumber
\end{align}
which are similar to Lemma \ref{CtoQ potential energy}. We again show \eqref{eq: CtoQ potential energy convergence} only. Repeating, we decompose 
\begin{equation}\label{potential energy decomposition'}\begin{aligned}
&\int_{\mathbb{R}^6}\frac{\rho_{\mathcal{Q}_\hbar}^\hbar(x)\rho_{\mathcal{Q}_\hbar}^\hbar(y)}{|x-y|}dxdy-\int_{\mathbb{R}^6}\frac{\rho_{f_\hbar}(x)\rho_{f_\hbar}(y)}{|x-y|}dxdy\\
&=\int_{\mathbb{R}^6}\frac{(\rho_{\mathcal{Q}_\hbar}^\hbar-\rho^\hbar_{\textup{Op}_\hbar^T[f_\hbar]})(x)(\rho_{\mathcal{Q}_\hbar}^\hbar+\rho_{f_\hbar})(y)}{|x-y|}dxdy\\
&\quad+\int_{\mathbb{R}^6}\frac{(\rho^\hbar_{\textup{Op}_\hbar^T[f_\hbar]}-\rho_{f_\hbar})(x)(\rho_{\mathcal{Q}_\hbar}^\hbar+\rho_{f_\hbar})(y)}{|x-y|}dxdy.
\end{aligned}
\end{equation}
We can prove that the first integral in \eqref{potential energy decomposition'} converges to zero as we handled $I$ in \eqref{potential energy decomposition}. However, unlike $II$ in \eqref{potential energy decomposition}, the convergence $\rho^\hbar_{\textup{Op}_\hbar^T[f_\hbar]}-\rho_{f_\hbar}\to 0$ in $L^{\frac{6}{5}}$ does not immediately follow from Proposition \ref{TQ-conserv} due to the $\hbar$-dependence of $f_\hbar$. 
Thus, we rather employ the Plancherel theorem and H\"older and Hausdorff-Young inequalities,  
$$\begin{aligned}
\left|\int_{\mathbb{R}^6}\frac{(\rho^\hbar_{\textup{Op}_\hbar^T[f_\hbar]}-\rho_{f_\hbar})(x)(\rho_{\mathcal{Q}_\hbar}^\hbar+\rho_{f_\hbar})(y)}{|x-y|}dxdy\right|&\sim \left|\int_{\mathbb{R}^3} \frac{e^{-\hbar|\xi|^2}-1}{|\xi|^2}\overline{\widehat{\rho_{f_\hbar}}(\xi)}(\widehat{\rho_{\mathcal{Q}_\hbar}^\hbar}+\widehat{\rho_{f_\hbar}})(\xi) d\xi\right|\\
&\leq \left\|\tfrac{e^{-\hbar|\xi|^2}-1}{|\xi|^2}\right\|_{L^5}\|\widehat{\rho_{f_\hbar}}\|_{L^{\frac{5}{2}}}\|\widehat{\rho_{{\mathcal{Q}_\hbar}}^\hbar}+\widehat{\rho_{f_\hbar}}\|_{L^{\frac{5}{2}}}\\
&\leq \left\|\tfrac{e^{-\hbar|\xi|^2}-1}{|\xi|^2}\right\|_{L^5}\|\rho_{f_\hbar}\|_{L^{\frac{5}{3}}}\|\rho_{{\mathcal{Q}_\hbar}}^\hbar+\rho_{f_\hbar}\|_{L^{\frac{5}{3}}}.
\end{aligned}$$
Finally, using that 
$$\begin{aligned}
\left\|\tfrac{e^{-\hbar|\xi|^2}-1}{|\xi|^2}\right\|_{L^5}&\leq \left\|\tfrac{e^{-\hbar|\xi|^2}-1}{|\xi|^2}\right\|_{L^5(|\xi|\leq\hbar^{-1/2})}+\left\|\tfrac{e^{-\hbar|\xi|^2}-1}{|\xi|^2}\right\|_{L^5(|\xi|\geq\hbar^{-1/2})}\\
&\sim \|\hbar\|_{L^5(|\xi|\leq\hbar^{-1/2})}+\left\|\tfrac{1}{|\xi|^2}\right\|_{L^5(|\xi|\geq\hbar^{-1/2})}\sim\hbar^{\frac{7}{10}},
\end{aligned}$$
we conclude that the second integral in \eqref{potential energy decomposition'} converges to zero.
\end{proof}

\section{Proof of the main theorem}
In this section, we complete the proof of Theorem \ref{thm1} $(ii)$-$(iv)$. Theorem \ref{thm1} $(i)$ is proved at the end of Section \ref{sec: CtoQ}.


\begin{proof}[Proof of Theorem \ref{thm1} $(ii)$]
Let $\{\mathcal{Q}_\hbar\}_{\hbar\in(0,1]}$ be a family of minimizers for the quantum minimization problem \eqref{quantum VP}. Then, by Proposition \ref{prop: CtoQ} and \ref{prop: QtoC}, we have
$$\lim_{\hbar\to0}\mathcal{J}_{\min}^\hbar=\mathcal{J}_{\min}.$$
\end{proof}

\begin{proof}[Proof of Theorem \ref{thm1} $(iii)$]
By Theorem \ref{thm1} $(i)$, there exists a sequence $\{\hbar_n\}_{n=1}^\infty$ such that
$$\lim_{n\to\infty}\mathcal{J}(f_{\hbar_n})=\mathcal{J}_{\min},$$
where
$$f_{\hbar}=\tilde{\beta}\big(-\mu_{\hbar}-(\tfrac{|p|^2}{2}-U_{\hbar}(q))\big)$$
and $U_\hbar=\frac{1}{|x|}*\rho_{\mathcal{Q}_\hbar}^\hbar$. In other words, $\{f_{\hbar_n}\}_{n=1}^\infty$ is a minimizing sequence for the classical variational problem \eqref{classical VP}. Thus, it follows from Guo and Rein \cite{GuoRein1, Rein} (see \cite[Theorem 1.1]{Rein}) that there exist a minimizer
$$\mathcal{Q}=\tilde{\beta}\big(-\mu-(\tfrac{|p|^2}{2}-U(q))\big)$$
for the classical minimization problem \eqref{classical VP} and translation parameters $\{x_n\}_{n=1}^\infty\subset\mathbb{R}^3$ such that passing to a subsequence,
$f_{\hbar_n}(\cdot-x_n,\cdot)-\mathcal{Q}\rightharpoonup 0$ weakly in $L^\alpha(\mathbb{R}^6)$ for all $\alpha>\frac{5}{3}$, and 
\begin{equation}\label{U tilde convergence}
\nabla \tilde{U}_{\hbar_n}(\cdot-x_n)\to \nabla U\quad\textup{strongly in }L^2(\mathbb{R}^3),
\end{equation}
where $\tilde{U}_{\hbar}:=\frac{1}{|x|}*\rho_{f_\hbar}$. Therefore, combining \eqref{eq: CtoQ potential energy convergence} and \eqref{U tilde convergence}, we conclude that
\begin{equation}\label{gradient L^2 bound}
\nabla U_{\hbar_n}(\cdot-x_n)\to \nabla U\quad\textup{strongly in }L^2(\mathbb{R}^3).
\end{equation}
From now on, we abuse the notations by denoting $U_{\hbar_n}(\cdot-x_n)$ (resp., $f_{\hbar_n}(\cdot-x_n,\,\cdot)$) by $U_{\hbar_n}$ (resp., $f_{\hbar_n}$).

 Applying the Sobolev inequality to \eqref{gradient L^2 bound}, we obtain $\|U_{\hbar_n}-U\|_{L^6}\to 0$. On the other hand, by the Sobolev inequality, we have $\||\nabla|^{\frac{7}{10}}(U_{\hbar_n}-U)\|_{L^6}=\||\nabla|^{-\frac{13}{10}}(\rho_{\mathcal{Q}_{\hbar_n}}-\rho_{\mathcal{Q}})\|_{L^6}\lesssim \|\rho_{\mathcal{Q}_{\hbar_n}}-\rho_{\mathcal{Q}}\|_{L^\frac{5}{3}}\leq \|\rho_{\mathcal{Q}_{\hbar_n}}\|_{L^\frac{5}{3}}+\|\rho_{\mathcal{Q}}\|_{L^\frac{5}{3}}$. Hence, Lemma \ref{improved uniform boundedness} $(iii)$ and Proposition \ref{prop-Q} $(v)$ yield uniform boundedness of $\||\nabla|^{\frac{7}{10}}(U_{\hbar_n}-U)\|_{L^6}$. Thus, by interpolation, we obtain $\||\nabla|^s(U_{\hbar_n}-U)\|_{L^{6}}\to 0$ for all $0\leq s<\frac{7}{10}$. Therefore, by Morrey's inequality, we conclude that $U_{\hbar_n}\to U$ in $C^{0,\alpha}(\mathbb{R}^3)$ for all $0\leq\alpha<\frac{1}{5}$.

It remains to show that $\mu_{\hbar_n}\to\mu$. By Lemma \ref{improved uniform
 boundedness} $(iii)$ and Corollary \ref{Lagrange multiplier bound}, there exists a subsequence of $\{\hbar_n\}_{n=1}^\infty$ but still denoted by $\{\hbar_n\}_{n=1}^\infty$ such that
 $$\mu_{\hbar_n}\to \tilde{\mu}<0$$
for some $\tilde{\mu}>0$. We claim that $\tilde{\mu}=\mu$. Indeed, since $f_\hbar$ weakly converges to $\mathcal{Q}$ in $L^\alpha(\mathbb{R}^6)$, we have 
$$\int_{\mathbb{R}^6} \tilde{\beta}\big(\mu_{\hbar_n}-(\tfrac{|p|^2}{2}-U_{\hbar_n}(q))\big)g(q,p)\,dqdp\to \int_{\mathbb{R}^6} \tilde{\beta}\big(\mu-(\tfrac{|p|^2}{2}-U(q))\big) g(q,p)\, dqdp$$
for $g \in C^\infty_c(\mathbb{R}^6)$. On the other hand, by the Sobolev embedding, $U_{\hbar_n}$ converges to $U$ in $L^6(\mathbb{R}^3)$, and consequently it converges almost everywhere. We also recall from Proposition \ref{uniform boundedness} that $\|U_\hbar\|_{L^\infty}$ is uniformly bounded in $\hbar\in(0,1]$. 
We then apply the Lebesgue dominated convergence theorem to obtain
$$\int_{\mathbb{R}^6} \tilde{\beta}\big(\mu_{\hbar_n}-(\tfrac{|p|^2}{2}-U_{\hbar_n}(q))\big)g(q,p)\,dqdp\to \int_{\mathbb{R}^6} \tilde{\beta}\big(\tilde{\mu}-(\tfrac{|p|^2}{2}-U(q))\big)g(q,p) dqdp.$$
Since $g$ is arbitrarily chosen, this proves $\tilde{\mu} = \mu$.
\end{proof}

\begin{proof}[Proof of Theorem \ref{thm1} $(iv)$]
We recall that $\mathcal{Q}_\hbar$ can be written by
$$\mathcal{Q}_\hbar = \tilde{\beta}\big(\mu_\hbar-(-\tfrac{\hbar^2}{2}\Delta-U_\hbar)\big).$$
By Theorem \ref{thm1}, there exist sequences $\{\hbar_n\}_{n=1}^\infty\subset(0,\infty)$, $\{x_n\}_{n=1}^\infty \subset \mathbb{R}^{3}$ and a minimizer $\mathcal{Q} = \tilde{\beta}(\mu-(\tfrac{|p|^2}{2}-U(q)))$ for $\mathcal{J}_{\min}$ such that $\hbar_n\to 0$, $\mu_{\hbar_n} \to \mu$ and $U_{\hbar_n}(\cdot-x_n) \to U$ strongly in $\dot{H}^1(\mathbb{R}^3) \cap L^\infty(\mathbb{R}^3)$ as $n\to \infty$.

Note that 
\begin{equation}\label{translated Q}
\tau_{-a}\mathcal{Q}_{\hbar}\tau_{a} = \tilde{\beta}\big(\mu_{\hbar}-(-\tfrac{\hbar^2}{2}\Delta-U_{\hbar}(\cdot-a))\big).
\end{equation}
To see this, we denote the negative eigenvalues and the corresponding normalized eigenfunctions of the Schr\"odinger $(-\tfrac{\hbar^2}{2}\Delta-U_{\hbar})$ by $\lambda_j$'s and $\phi_j$'s respectively. Then, we have
$$\tau_{-a}\mathcal{Q}_{\hbar}\tau_{a}=\tau_{-a}\bigg\{\sum_j\tilde{\beta}(\mu_\hbar-\lambda_j)\ket{\phi_j}\bra{\phi_j}\bigg\}\tau_{a}=\sum_j\tilde{\beta}(\mu_\hbar-\lambda_j)\ket{\phi_j(\cdot-a)}\bra{\phi_j(\cdot-a)}.$$
By simple translation, $(-\tfrac{\hbar^2}{2}\Delta-U_{\hbar})\phi_j=\lambda_j\phi_j$ if and only if $(-\tfrac{\hbar^2}{2}\Delta-U_{\hbar}(\cdot-a))\phi_j(\cdot-a)=\lambda_j\phi_j(\cdot-a)$. Thus, by functional calculus, \eqref{translated Q} follows. 

We define 
$$f_{\hbar_n} := \tilde{\beta}\big(\mu_{\hbar_n}-(\tfrac{|p|^2}{2}-U_{\hbar_n}(q-x_n))\big).$$
Then, Proposition \ref{Toplitz quant convergence} and \ref{TQ-bounded} yield
$$\begin{aligned}
\big\|\tau_{-x_n}\mathcal{Q}_{\hbar_n}\tau_{x_n} - \textup{Op}^T_{\hbar_n}[\mathcal{Q}]\big\| &\leq \big\|\tau_{-x_n}\mathcal{Q}_{\hbar_n}\tau_{x_n} - \textup{Op}^T_{\hbar_n}[f_{\hbar_n}]\big\|+\big\|\textup{Op}^T_{\hbar_n}[f_{\hbar_n}-\mathcal{Q}]\big\| \\
&\leq o_n(1) +\|f_{\hbar_n}-\mathcal{Q}\|_{L^\infty(\R^6)}.
\end{aligned}$$
Since $\tilde{\beta}$ is continuous and $\mu_{\hbar_n} \to \mu$, $U_{\hbar_n}(\cdot-x_n) \to U$ in $L^\infty$ as $n\to\infty$, we deduce that $\|f_{\hbar_n}-\mathcal{Q}\|_{L^\infty(\R^6)}\to 0$.

We now observe from Proposition \ref{WT-bounded} and $(ii)$ that
{\[
\begin{aligned}
&\big\|\tilde{W}_{\hbar_n}[\tau_{-x_n}\mathcal{Q}_{\hbar_n}\tau_{x_n}] - \mathcal{Q}\big\|_{L^\infty(\R^6)} \\
&\leq \big\|\tilde{W}_{\hbar_n}[\tau_{-x_n}\mathcal{Q}_{\hbar_n}\tau_{x_n}] - \tilde{W}_{\hbar_n}[\textup{Op}^T_{\hbar_n}[\mathcal{Q}]]\big\|_{L^\infty(\R^6)}+\big\|\tilde{W}_{\hbar_n}[\textup{Op}^T_{\hbar_n}[\mathcal{Q}]] -\mathcal{Q}\big\|_{L^\infty(\R^6)} \\
&\leq \big\|\tau_{-x_n}\mathcal{Q}_{\hbar_n}\tau_{x_n} -\textup{Op}^T_{\hbar_n}[\mathcal{Q}]\big\|+\big\|G^{6}_{\hbar/2}*G^{6}_{\hbar/2}*\mathcal{Q} -\mathcal{Q}\big\|_{L^\infty(\R^6)} \to 0.
\end{aligned}
\]}
This completes the proof. 

\end{proof}

\appendix

\section{T\"{o}plitz quantization and {Husimi} transforms}\label{appendix: Toplitz}

In this appendix, we collect useful properties on the T\"{o}plitz quantization and the {Husimi} transform. Recall that for a real-valued function $f$ on the phase space, its T\"{o}plitz quantization is defined as
$$\textup{Op}^T_\hbar[f] := \frac{1}{(2\pi\hbar)^3}\int_{\R^6} |\varphi^\hbar_{(q,p)}\rangle\langle \varphi^\hbar_{(q,p)}|f(q,p) dqdp,$$
where $\varphi^\hbar_{(q,p)}(x) = \frac{1}{(\pi\hbar)^{3/4}}e^{-\frac{|x-q|^2}{2\hbar}}e^{\frac{ip\cdot x}{\hbar}}$. By the T\"{o}plitz quantization, a function is transformed into an operator. It has the following mapping properties (see Appendix B of \cite{GMP}). 

\begin{proposition}\label{TQ-bounded}
Suppose that $f:\mathbb{R}^6\to\mathbb{R}$ is contained in $L^\infty(\R^{6})$. 
\begin{enumerate}[$(i)$]
\item $\textup{Op}^T_\hbar[f]$ is bounded and self-adjoint on $L^2(\mathbb{R}^3)$. Moreover, we have
$$\|\textup{Op}^T_\hbar[f]\| \leq \|f\|_{L^\infty(\mathbb{R}^3)}.$$
\item If $f$ is non-negative, so is $\textup{Op}^T_\hbar[f]$.
\item If we further assume that $f\in L^\alpha(\mathbb{R}^6)$ for some $1\leq \alpha<\infty$, then $\textup{Op}^T_\hbar[f]$ is a compact operator.
\end{enumerate}
\end{proposition}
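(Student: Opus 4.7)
The plan is to exploit the resolution of identity for coherent states, namely
\[
\frac{1}{(2\pi\hbar)^3}\int_{\R^6}|\varphi^\hbar_{(q,p)}\rangle\langle\varphi^\hbar_{(q,p)}|\,dqdp = I_{L^2(\R^3)},
\]
which follows from a direct Fourier/Gaussian computation using $\|\varphi^\hbar_{(q,p)}\|_{L^2}=1$. Assuming this identity, for any $\psi_1,\psi_2\in L^2$ I would estimate the bilinear form
\[
\langle\psi_1,\textup{Op}^T_\hbar[f]\psi_2\rangle = \frac{1}{(2\pi\hbar)^3}\int_{\R^6} f(q,p)\,\overline{\langle\varphi^\hbar_{(q,p)},\psi_1\rangle}\,\langle\varphi^\hbar_{(q,p)},\psi_2\rangle\,dqdp
\]
using $|f|\leq\|f\|_{L^\infty}$, Cauchy--Schwarz in $(q,p)$, and the identity $\frac{1}{(2\pi\hbar)^3}\int|\langle\varphi^\hbar_{(q,p)},\psi\rangle|^2dqdp=\|\psi\|_{L^2}^2$ (a scalar consequence of the resolution of identity). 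This yields the operator-norm bound in (i); self-adjointness follows from the fact that $f$ is real and $|\varphi^\hbar_{(q,p)}\rangle\langle\varphi^\hbar_{(q,p)}|$ is self-adjoint. Claim (ii) then drops out by setting $\psi_1=\psi_2=\psi$, since each $|\langle\varphi^\hbar_{(q,p)},\psi\rangle|^2\geq 0$.

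For (iii), my strategy is to establish the endpoint bound $\textup{Op}^T_\hbar:L^1(\R^6)\to\mathfrak{S}^1$ and then interpolate with (i). Using the triangle inequality for the Bochner-type integral defining $\textup{Op}^T_\hbar[f]$ and the fact that each rank-one projector $|\varphi^\hbar_{(q,p)}\rangle\langle\varphi^\hbar_{(q,p)}|$ has trace-class norm $1$, I obtain
\[
\|\textup{Op}^T_\hbar[f]\|_{\mathfrak{S}^1}\leq\frac{1}{(2\pi\hbar)^3}\int_{\R^6}|f(q,p)|\,dqdp.
\]
Complex interpolation (in the Calder\'on sense) between this bound and the $L^\infty\to\mathfrak{S}^\infty$ bound from (i) gives $\textup{Op}^T_\hbar:L^\alpha\to\mathfrak{S}^\alpha$ continuously for $1\leq\alpha\leq\infty$, so that $f\in L^\alpha$ with $\alpha<\infty$ implies $\textup{Op}^T_\hbar[f]\in\mathfrak{S}^\alpha\subset\mathfrak{S}^\infty$, i.e. compact.

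If one wishes to avoid interpolation, an equivalent route is to approximate $f\in L^\alpha$ in $L^\alpha$-norm by simple functions $f_n$ supported on sets of finite measure; each $\textup{Op}^T_\hbar[f_n]$ is trace class (hence compact) by the $L^1\to\mathfrak{S}^1$ bound, and $\textup{Op}^T_\hbar[f-f_n]$ converges to $0$ in operator norm thanks to the $L^1$-in-$f$ bound combined with (i) applied to bounded pieces, so $\textup{Op}^T_\hbar[f]$ is compact as the operator-norm limit of compact operators.

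The main obstacle is genuinely routine once the resolution of identity is in hand; the only slightly delicate point is verifying that identity rigorously (it amounts to writing $\varphi^\hbar_{(q,p)}$ as a modulated Gaussian and applying Plancherel in the $p$-variable), and then justifying the Bochner-integral manipulation that converts pointwise estimates on $f$ into Schatten-norm estimates on $\textup{Op}^T_\hbar[f]$. Both are standard for coherent-state quantization, and I would simply cite the relevant appendix of \cite{GMP} for the technical details while keeping the short computations above to make the paper self-contained.
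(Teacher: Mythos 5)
The paper does not actually prove Proposition \ref{TQ-bounded}; it simply cites Appendix~B of Golse--Mouhot--Paul \cite{GMP}. Your argument is the standard one for coherent-state (Wick/T\"oplitz) quantization and matches the structure used in that reference: the resolution of identity $\frac{1}{(2\pi\hbar)^3}\int|\varphi^\hbar_{(q,p)}\rangle\langle\varphi^\hbar_{(q,p)}|\,dqdp=\mathrm{Id}$ gives $(i)$ via Cauchy--Schwarz on the bilinear form, $(ii)$ is immediate from the same quadratic-form identity with $f\geq 0$, and $(iii)$ comes from the $L^1\to\mathfrak{S}^1$ bound (each coherent projector is rank one with trace one) and interpolation against the $L^\infty\to\mathfrak{S}^\infty$ bound. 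All of that is correct, and the resolution of identity itself is precisely the Plancherel/Gaussian computation you indicate (and indeed is invoked implicitly at the end of the paper's proof of Proposition~\ref{TQ-est2}).

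One small remark on your ``interpolation-free'' alternative for $(iii)$: as written, the phrase that $\textup{Op}^T_\hbar[f-f_n]\to 0$ in operator norm ``thanks to the $L^1$-in-$f$ bound combined with $(i)$ applied to bounded pieces'' glosses over the fact that $f-f_n\to 0$ in $L^\alpha$ alone does not give either an $L^\infty$ or an $L^1$ bound. To make this route airtight one should split $g:=f-f_n$ by Chebyshev: $g=g\mathbf{1}_{\{|g|\leq\epsilon\}}+g\mathbf{1}_{\{|g|>\epsilon\}}$, so that the first piece has operator norm $\leq\epsilon$ by $(i)$, while the second piece has $L^1$-norm $\leq\epsilon^{-(\alpha-1)}\|g\|_{L^\alpha}^\alpha$ and hence small trace norm. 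With that clarification both of your routes close, and the complex interpolation route (Calder\'on interpolation of Schatten ideals, $[\mathfrak{S}^1,\mathfrak{S}^\infty]_\theta=\mathfrak{S}^{1/(1-\theta)}$) is the more economical of the two and is what \cite{GMP} effectively uses.
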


A direct calculation shows that
\begin{equation}\label{density of quantization}
\rho^\hbar_{\textup{Op}^T_\hbar[f]}(x)=\int_{\R^3}\frac{1}{(\pi\hbar)^{3/2}}e^{-\frac{|x-q|^2}{\hbar}}\rho_f(q) dq=G_{\hbar/2}^3*\rho_f,
\end{equation}
where $G^d_a$ is the centered Gaussian function on $\R^d$ with the covariance matrix $aI$. Thus, the T\"oplitz quantization has almost the same density function as that of the given distribution function.

The following proposition asserts the T\"oplitz quantization  also almost preserve the mass and the kinetic and the potential energies.

\begin{proposition}\label{TQ-conserv} Suppose that $f:\mathbb{R}^6\to\mathbb{R}$ is non-negative.
\begin{enumerate}[$(i)$]
\item (Mass) If $f\in L^1(\mathbb{R}^6)$, then $\textup{Tr}^\hbar(\textup{Op}^T_\hbar[f])=\|f\|_{L^1(\mathbb{R}^6)}$. 
\item (Kinetic energy) If $f, |p|^2f\in L^1(\R^6)$, then
$$\lim_{\hbar\to0}\textup{Tr}^\hbar\big(-\hbar^2\Delta\textup{Op}^T_\hbar[f]\big)=\int_{\mathbb{R}^6} |p|^2f(q,p)dqdp.$$
\item (Potential energy) If $\rho_f \in L^{\frac{6}{5}}(\R^3)$, then
$$\lim_{\hbar\to0}\int_{\R^6}\frac{\rho^{\hbar}_{\textup{Op}^T_{\hbar}[f]}(x)\rho^{\hbar}_{\textup{Op}^T_{\hbar}[f]}(y)}{|x-y|}dxdy  \to \int_{\R^6}\frac{\rho_f(x)\rho_f(y)}{|x-y|}dxdy.$$
\end{enumerate}
\end{proposition}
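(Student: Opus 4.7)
The plan is to prove all three items by exploiting the coherent state resolution of identity $\frac{1}{(2\pi\hbar)^3}\int_{\R^6}|\varphi^\hbar_{(q,p)}\rangle\langle \varphi^\hbar_{(q,p)}|\,dqdp=I$ on $L^2(\R^3)$, together with the density formula \eqref{density of quantization}. Part $(i)$ is the warm-up: by linearity of the trace and Fubini (justified by the non-negativity of $f$),
\[
\textup{Tr}^\hbar\big(\textup{Op}^T_\hbar[f]\big) = \int_{\R^6}\textup{Tr}\big(|\varphi^\hbar_{(q,p)}\rangle\langle\varphi^\hbar_{(q,p)}|\big)f(q,p)\,dqdp = \int_{\R^6}\|\varphi^\hbar_{(q,p)}\|_{L^2}^2\,f(q,p)\,dqdp,
\]
which equals $\|f\|_{L^1}$ since each coherent state is $L^2$-normalized.

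For part $(ii)$, the key computation is $\langle\varphi^\hbar_{(q,p)},-\hbar^2\Delta\varphi^\hbar_{(q,p)}\rangle = |p|^2+\tfrac{3\hbar}{2}$. A direct differentiation gives $\hbar\nabla\varphi^\hbar_{(q,p)}(x) = \big(-(x-q)+ip\big)\varphi^\hbar_{(q,p)}(x)$, so $|\hbar\nabla\varphi^\hbar_{(q,p)}(x)|^2 = (|x-q|^2+|p|^2)|\varphi^\hbar_{(q,p)}(x)|^2$. Integrating over $x$ and noting that $|\varphi^\hbar_{(q,p)}|^2$ is a Gaussian of variance $\hbar/2$ per coordinate yields $\int|x-q|^2|\varphi^\hbar_{(q,p)}|^2dx=\tfrac{3\hbar}{2}$, hence the claim. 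Inserting into the resolution of identity,
\[
\textup{Tr}^\hbar\big(-\hbar^2\Delta\,\textup{Op}^T_\hbar[f]\big) = \int_{\R^6}\Big(|p|^2+\tfrac{3\hbar}{2}\Big)f(q,p)\,dqdp,
\]
which converges to $\int|p|^2 f\,dqdp$ as $\hbar\to0$ by the assumptions $f,|p|^2f\in L^1$.

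For part $(iii)$, we use the density formula to pass to Fourier variables. Setting $\rho_\hbar:=\rho^\hbar_{\textup{Op}^T_\hbar[f]}=G^3_{\hbar/2}*\rho_f$ and using $\widehat{G^3_{\hbar/2}}(\xi)=e^{-\hbar|\xi|^2/4}$ together with the fact that $1/|\cdot|$ has Fourier transform proportional to $1/|\xi|^2$ in $\R^3$, Plancherel's identity yields
\[
\int_{\R^6}\frac{\rho_\hbar(x)\rho_\hbar(y)}{|x-y|}dxdy = c_0\int_{\R^3}\frac{e^{-\hbar|\xi|^2/2}|\widehat{\rho_f}(\xi)|^2}{|\xi|^2}\,d\xi
\]
for an absolute constant $c_0>0$. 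The integrand is dominated by $|\widehat{\rho_f}(\xi)|^2/|\xi|^2$, which is integrable by the Hardy--Littlewood--Sobolev inequality applied to $\rho_f\in L^{6/5}(\R^3)$. Dominated convergence then gives the stated limit.

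The main technical care lies in $(ii)$, where the trace of the unbounded operator $-\hbar^2\Delta$ against the trace-class operator $\textup{Op}^T_\hbar[f]$ must be justified; this is handled by combining the positivity of $\textup{Op}^T_\hbar[f]$ from Proposition~\ref{TQ-bounded}~$(ii)$ with monotone approximation by spectral cutoffs of $-\hbar^2\Delta$, exchanging limit and integration by monotone convergence in the non-negative integrand above. The $L^\alpha$-hypothesis in Proposition~\ref{TQ-bounded}~$(iii)$ is not needed here because we only require the trace and energy integrals to be finite, which follows from the stated hypotheses.
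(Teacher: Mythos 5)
Your proof is correct and follows essentially the same route as the paper: part $(i)$ via the resolution of identity, part $(ii)$ via the exact computation $\langle\varphi^\hbar_{(q,p)},-\hbar^2\Delta\varphi^\hbar_{(q,p)}\rangle=|p|^2+\tfrac{3\hbar}{2}$, and part $(iii)$ from the formula $\rho^{\hbar}_{\textup{Op}^T_\hbar[f]}=G^3_{\hbar/2}*\rho_f$ together with Hardy--Littlewood--Sobolev. The only cosmetic difference is that you pass to Fourier variables and invoke dominated convergence in $(iii)$, whereas the paper implicitly uses $G^3_{\hbar/2}*\rho_f\to\rho_f$ in $L^{6/5}$ and continuity of the Coulomb energy; both are equivalent applications of the same inputs.
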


\begin{proof} $(i)$ is obvious. For $(ii)$, direct calculations yield
$$\begin{aligned}
\textup{Tr}^\hbar\big(-\hbar^2\Delta\textup{Op}^T_\hbar[f]\big)&=\sum_{j=1}^3\int_{\R^6} \textup{Tr}\big(|\hbar\partial_{x_j}\varphi^\hbar_{(q,p)}\rangle\langle \hbar\partial_{x_j}\varphi^\hbar_{(q,p)}|\big)f(q,p) dqdp\\
&=\sum_{j=1}^3\int_{\R^6} \left\{\int_{\mathbb{R}^3} \big(p_j^2+(x_j-q_j)^2\big)|\varphi^\hbar_{(q,p)}(x)|^2dx\right\}f(q,p) dqdp\\
&\to \int_{\mathbb{R}^6} |p|^2f(q,p)dqdp.
\end{aligned}$$
$(iii)$ follows from the Hardy-Littlewood-Sobolev inequality and \eqref{density of quantization}. 
\end{proof}

The T\"oplitz quantization also nearly preserves products. 
\begin{proposition}\label{TQ-est2}
If $f, g \in L^\infty(\R^6)\cap L^\alpha(\mathbb{R}^6)$ for some $1\leq\alpha<\infty$, then
$$\lim_{\hbar\to0}\big\|\textup{Op}^T_\hbar[fg]-\textup{Op}^T_\hbar[f]\textup{Op}^T_\hbar[g]\big\|=0.$$
\end{proposition}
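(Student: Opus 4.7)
The plan is to rewrite the difference $\textup{Op}^T_\hbar[f]\textup{Op}^T_\hbar[g] - \textup{Op}^T_\hbar[fg]$ as a single phase-space integral whose integrand vanishes on the diagonal $z_1=z_2$, and then exploit the $\sqrt{\hbar}$-scale concentration of the coherent-state overlap to close the estimate. First, applying the coherent-state resolution of identity $I = (2\pi\hbar)^{-3}\int|\varphi^\hbar_z\rangle\langle\varphi^\hbar_z|\,dz$ on the right of $\textup{Op}^T_\hbar[fg]$ and comparing with the analogous double-integral expression for $\textup{Op}^T_\hbar[f]\textup{Op}^T_\hbar[g]$, I obtain the key commutator-like identity
\begin{equation*}
\textup{Op}^T_\hbar[f]\textup{Op}^T_\hbar[g] - \textup{Op}^T_\hbar[fg] = \frac{1}{(2\pi\hbar)^6}\iint f(z_1)\big[g(z_2)-g(z_1)\big]|\varphi^\hbar_{z_1}\rangle\langle\varphi^\hbar_{z_1}|\varphi^\hbar_{z_2}\rangle\langle\varphi^\hbar_{z_2}|\,dz_1\,dz_2.
\end{equation*}
A direct Gaussian computation produces the explicit overlap $|\langle\varphi^\hbar_{z_1}|\varphi^\hbar_{z_2}\rangle| = e^{-|z_1-z_2|^2/(4\hbar)}$, so the integrand is effectively supported on $|z_1-z_2|=O(\sqrt{\hbar})$ where $g(z_2)-g(z_1)$ is small.

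Second, I treat the smooth case. For $f \in L^\infty$ and $g \in C^1$ with $\|\nabla g\|_\infty<\infty$, I test the identity as a bilinear form against unit vectors $\psi_1,\psi_2\in L^2$, use $|g(z_2)-g(z_1)|\leq\|\nabla g\|_\infty|z_1-z_2|$, factor the Gaussian as $e^{-|z_1-z_2|^2/(8\hbar)}\cdot e^{-|z_1-z_2|^2/(8\hbar)}$, and apply Cauchy-Schwarz in $(z_1,z_2)$, distributing one Gaussian factor to each side. Combined with the resolution identity $\int|\langle\psi|\varphi^\hbar_z\rangle|^2\,dz=(2\pi\hbar)^3\|\psi\|_{L^2}^2$ and the moment identity $\int_{\R^6}|w|^2 e^{-|w|^2/(4\hbar)}\,dw = O(\hbar^4)$, the bilinear form is bounded by $C\hbar^{1/2}\|f\|_\infty\|\nabla g\|_\infty\|\psi_1\|\|\psi_2\|$, giving $\|\textup{Op}^T_\hbar[f]\textup{Op}^T_\hbar[g] - \textup{Op}^T_\hbar[fg]\|=O(\hbar^{1/2})$.

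Third, to handle general $f, g \in L^\infty\cap L^\alpha$, I mollify at a scale $\delta$ with $\sqrt{\hbar}\ll\delta\to 0$, say $g_\delta = G^6_\delta\ast g$, which satisfies $\|\nabla g_\delta\|_\infty\lesssim\delta^{-1}\|g\|_\infty$. The second step then controls the product for $(f, g_\delta)$ by $O(\hbar^{1/2}/\delta)$. The remainder $g-g_\delta$ is plugged back into the key identity and the $(z_1,z_2)$-integration is split into $|z_1-z_2|\leq\delta$ versus $|z_1-z_2|>\delta$: on the far region the Gaussian tail $e^{-\delta^2/(4\hbar)}$ provides exponential suppression, absorbing the trivial $L^\infty$ bound $\|g-g_\delta\|_\infty\leq 2\|g\|_\infty$; on the near region the increment is controlled in an averaged sense using the $L^\alpha$ convergence $\|g-g_\delta\|_{L^\alpha}\to 0$ together with a weighted Cauchy-Schwarz against the coherent-state overlap. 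Choosing, e.g., $\delta=\hbar^{1/4}$ makes both contributions vanish.

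The main obstacle is this density step: the elementary bound $\|\textup{Op}^T_\hbar[h]\|\leq\|h\|_\infty$ is insufficient because $L^\infty$-approximation of arbitrary $L^\infty$ functions by smooth ones is unavailable, so one cannot simply reduce to the smooth case by triangle inequality and a uniform operator-norm approximation of each factor. The remedy is to work directly with the integral representation in Step 1 and balance the mollification scale $\delta$ against the semi-classical scale $\sqrt{\hbar}$; this is precisely where the joint hypothesis $L^\infty\cap L^\alpha$ (rather than pure $L^\infty$) becomes essential, since $L^\alpha$-control of $g-g_\delta$ in the near-diagonal region replaces the missing $L^\infty$-control.
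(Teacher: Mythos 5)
Your Steps 1 and 2 reproduce the paper's argument essentially verbatim: you obtain the same commutator-type double integral by inserting the coherent-state resolution of identity, use the same Gaussian overlap $|\langle\varphi^\hbar_{z_1}|\varphi^\hbar_{z_2}\rangle|=e^{-|z_1-z_2|^2/(4\hbar)}$, and extract the same $O(\hbar^{1/2}\|f\|_{L^\infty}\|g\|_{C^1})$ rate for smooth $g$. The only cosmetic difference is that the paper passes the pointwise bound $|w|e^{-|w|^2/(4\hbar)}\lesssim\sqrt{\hbar}\,e^{-|w|^2/(8\hbar)}$ and then applies Young's inequality to the resulting Gaussian convolution, whereas you split the Gaussian and apply Cauchy--Schwarz together with the moment estimate $\int_{\R^6}|w|^2e^{-|w|^2/(4\hbar)}dw\sim\hbar^4$; both routes are equivalent and correct.

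Your Step 3 is where you and the paper diverge, and here there is a real issue on both sides. You correctly observe that the paper's ``standard density argument with Proposition~\ref{TQ-bounded}~$(i)$'' is not innocent: $C_c^\infty(\R^6)$ is \emph{not} dense in $L^\infty\cap L^\alpha$ for the $\|\cdot\|_{L^\infty}$ norm, so one cannot reduce the general case to the smooth one by triangle inequality and the bound $\|\textup{Op}^T_\hbar[h]\|\le\|h\|_{L^\infty}$. This is a genuine concern with the paper's proof. However, the mollification you propose does not close the gap. On the near-diagonal region the error $h=g-g_\delta$ is not pointwise small, and Cauchy--Schwarz converts the phase-space $L^2$ control into operator bounds of the order $\hbar^{-3/2}\|f\cdot h\|_{L^2(\R^6)}$, which diverges for any choice $\delta\gg\sqrt{\hbar}$; more fundamentally, there is no $\hbar$-uniform $L^\alpha\to\mathcal{B}(L^2)$ bound for $\textup{Op}^T_\hbar$ (the only $\hbar$-uniform operator-norm bound available is the $L^\infty$ one, cf.\ the spirit of Remark~\ref{LT remark}~$(iii)$), so $L^\alpha$-smallness of $g-g_\delta$ is the wrong currency. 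The quantity that genuinely makes the near-diagonal region small is a uniform modulus of continuity of $g$, which controls $|g(z_1)-g(z_2)|$ pointwise on $|z_1-z_2|\lesssim\sqrt{\hbar}$; with that hypothesis your near/far split works directly and no density or mollification is needed at all.

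In fact the $L^\alpha$ hypothesis in the stated proposition does double duty as a red herring: in the paper's own application (the term $A_{1;\hbar}$ in the proof of Proposition~\ref{Toplitz quant convergence}), the symbols are products of resolvents $(H_\hbar+\mu_\hbar\mp i)^{-j}$ with $H_\hbar=\tfrac12|p|^2+V_\hbar(q)$, which are bounded and uniformly H\"older continuous but do \emph{not} lie in $L^\alpha(\R^6)$ for any finite $\alpha$ (they have no decay in $q$). So the hypothesis as written is both unmet where the proposition is used and unhelpful in the proof. The robust statement is: if $f$ is bounded and $g$ is bounded and uniformly continuous (or H\"older), then $\|\textup{Op}^T_\hbar[fg]-\textup{Op}^T_\hbar[f]\textup{Op}^T_\hbar[g]\|\to0$, proved by your near/far Gaussian splitting together with $\omega_g(\sqrt{\hbar})\to0$ for the modulus of continuity $\omega_g$. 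Your instinct that the paper's density reduction hides a gap is right; the repair should go through uniform continuity, not through $L^\alpha$.
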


\begin{proof}
By a standard density argument with Proposition \ref{TQ-bounded} $(i)$, we may assume that $f,g\in C_c^\infty(\mathbb{R}^6)$. By the definition, we have
$$\textup{Op}^T_\hbar[f]\textup{Op}^T_\hbar[g]= \frac{1}{(2\pi\hbar)^6}\int_{\mathbb{R}^{12}} f(q,p)g(q',p')|\varphi^\hbar_{(q,p)}\rangle\langle \varphi^\hbar_{(q,p)}| \varphi^\hbar_{(q',p')}\rangle \langle \varphi^\hbar_{(q',p')}|dq'dp'dqdp,$$
while inserting
$$\frac{1}{(2\pi\hbar)^3}\int_{\mathbb{R}^6}|\varphi_{(q,p)}^\hbar\rangle\langle \varphi_{(q,p)}^\hbar| dqdp=\textup{Id}_{L^2(\mathbb{R}^3)},$$
we may write 
$$\textup{Op}^T_\hbar[fg] =\frac{1}{(2\pi\hbar)^6}\int_{\mathbb{R}^{12}}  f(q,p)g(q,p)|\varphi^\hbar_{(q,p)}\rangle\langle \varphi^\hbar_{(q,p)}| \varphi^\hbar_{(q',p')}\rangle \langle \varphi^\hbar_{(q',p')}|dq'dp'dqdp. $$
A direct computation shows
$$\langle \varphi^\hbar_{(q,p)}|\varphi^\hbar_{(q',p')}\rangle = 2^{\frac{3}{2}}e^{-\frac{|q-q'|^2}{4\hbar}}e^{-\frac{|p-p'|^2}{4\hbar}}e^{-\frac{i(p-p')(q+q')}{2\hbar}}.$$
Hence, for any $\psi , \tilde\psi \in L^2(\R^3)$ with $\|\psi\|_{L^2} = \|\tilde\psi\|_{L^2} = 1$, we have
$$\begin{aligned}
&\big|\big\langle\tilde{\psi}\big|(\textup{Op}^T_\hbar[fg]-\textup{Op}^T_\hbar[f]\textup{Op}^T_\hbar[g])\big|\psi\big\rangle\big|   \\
&\lesssim \frac{1}{\hbar^{6}}\int_{\mathbb{R}^{12}} e^{\frac{-|p-p'|^2}{4\hbar}}e^{\frac{-|q-q'|^2}{4\hbar}}|f(q,p)|\big|g(q',p')-g(q,p)\big|\big|\langle\tilde\psi| \varphi^\hbar_{(q,p)}\rangle\big|\big|\langle \varphi^\hbar_{(q',p')}|\psi\rangle\big|dq'dp'dqdp\\
& \lesssim \frac{1}{\hbar^{6}}\int_{\mathbb{R}^{12}} e^{\frac{-|q-q'|^2}{4\hbar}}e^{\frac{-|p-p'|^2}{4\hbar}}\|f\|_{L^\infty}\|g\|_{C^1}|(q'-q, p'-p)|\big|\langle\tilde\psi| \varphi^\hbar_{(q,p)}\rangle\big|\big|\langle \varphi^\hbar_{(q',p')}|\psi\rangle\big|dq'dp'dqdp \\
& \lesssim \frac{\sqrt{\hbar}}{\hbar^{6}}\bigg\{\int_{\mathbb{R}^{12}}e^{\frac{-|q-q'|^2}{4\hbar}}e^{\frac{-|p-p'|^2}{4\hbar}}\big|\langle\tilde\psi| \varphi^\hbar_{(q,p)}\rangle\big|\big|\langle \varphi^\hbar_{(q',p')}|\psi\rangle\big|dq'dp'dqdp\bigg\}\|f\|_{L^\infty}\|g\|_{C^1}\\
& \sim \frac{\sqrt{\hbar}}{\hbar^3}\bigg\{\int_{\mathbb{R}^{6}}\big|\langle\tilde\psi| \varphi^\hbar_{(q,p)}\rangle\big|\big(G_{2\hbar}^6*\big|\langle \varphi^\hbar_{(\cdot,\cdot)}|\psi\rangle\big|\big)(q,p)dqdp\bigg\}\|f\|_{L^\infty}\|g\|_{C^1}\\
& \leq \frac{\sqrt{\hbar}}{\hbar^3}\big\|\langle\tilde\psi| \varphi^\hbar_{(q,p)}\rangle\big\|_{L_{q,p}^2}\big\|\langle \varphi^\hbar_{(q,p)}|\psi\rangle\big\|_{L_{q,p}^2}\|f\|_{L^\infty}\|g\|_{C^1}.
\end{aligned}$$
Note that since
$$\langle \varphi^\hbar_{(q,p)}|\psi\rangle=\int_{\mathbb{R}^3}\frac{1}{(\pi\hbar)^{3/4}}e^{-\frac{|x-q|^2}{2\hbar}}e^{-\frac{ip\cdot x}{\hbar}}\psi(x)dx=\mathcal{F}_x\Big(\frac{1}{(\pi\hbar)^{3/4}}e^{-\frac{|\cdot-q|^2}{2\hbar}}\psi\Big)(\tfrac{p}{\hbar}),$$
where $\mathcal{F}_x$ denotes the Fourier transform, it follows from the Plancherel theorem that
$$\begin{aligned}
\int_{\mathbb{R}^6} |\langle \varphi^\hbar_{(q,p)}|\psi\rangle|^2dqdp&=\int_{\mathbb{R}^6} \Big|\mathcal{F}_x\Big(\frac{1}{(\pi\hbar)^{3/4}}e^{-\frac{|\cdot-q|^2}{2\hbar}}\psi\Big)(\tfrac{p}{\hbar})\Big|^2dqdp\\
&\sim \int_{\mathbb{R}^6} \Big|\mathcal{F}_x\Big(e^{-\frac{|\cdot-q|^2}{2\hbar}}\psi\Big)(p)\Big|^2dqdp\sim \int_{\mathbb{R}^6} \big|e^{-\frac{|x-q|^2}{2\hbar}}\psi(x)\big|^2dxdq\sim \hbar^{\frac{3}{2}}.
\end{aligned}$$
Therefore, inserting this, we complete the proof.
\end{proof}

The Wigner transform of an operator $\gamma \in \mathcal{B}(L^2)$ with kernel $\gamma(x,x')$ is given by
\[
W_\hbar[\gamma](q,p) := \int_{\R^3}\gamma\big(q+\tfrac{y}{2}, q-\tfrac{y}{2}\big)e^{-\frac{i p \cdot y}{\hbar}}dy.
\]
The following proposition shows that the Wigner transform is asymptotically an inverse of the T\"oplitz operator. 
See \cite{GMP}. 
{\begin{proposition}
The following holds.
\[
W_\hbar[\textup{Op}^T_\hbar[f]] = G^{6}_{\hbar/2}*f,
\]
where $G^n_a$ is the centered Gaussian density on $\R^n$ with covariance matrix $aI$.
\end{proposition}}

{
The Husimi transform $\tilde{W}_{\hbar}[\gamma]$ of an operator $\gamma$ is given by
\[
\tilde{W}_{\hbar}[\gamma] \coloneqq G^6_{\hbar/2}*W_\hbar[\gamma].
\] 
Due to the regularizing effect of the Gaussian kernel  $G^6_{\hbar/2}$, the Husimi transform enjoys the following $L^\infty$ estimate.
\begin{proposition}\label{WT-bounded}
Let $\gamma$ be in $\mathcal{B}(L^2)$. Then there holds
\[
\|\tilde{W}_\hbar[\gamma]\|_{L^\infty(\R^6)} \leq \|\gamma\|_{\mathcal{B}(L^2)}.
\]
\end{proposition}
\begin{proof}
By duality, one has
\[
\|\tilde{W}_\hbar[\gamma]\|_{L^\infty} = \sup \left\{ \int \tilde{W}_\hbar[\gamma]f\,dqdp ~\Big|~ f \in L^1(\R^6)  \right\}
\]
The equality (54) in \cite{GMP} writes for $f \in L^1(\R^6)$,
\[
\text{Tr}^\hbar[\text{OP}^T_\hbar[f]\gamma] = \int \tilde{W}_\hbar[\gamma]f\,dqdp
\]
so that by Proposition \ref{TQ-conserv}
\[
\int \tilde{W}_\hbar[\gamma]f\,dqdp \leq \|\gamma\|_{\mathcal{B}(L^2)}\left|\text{Tr}^\hbar[\text{OP}^T_\hbar[f]]\right|
=  \|\gamma\|_{\mathcal{B}(L^2)}\|f\|_{L^1}.
\]
This proves the proposition.
\end{proof}
}

\section{Thermal effects for the gravitational Hartree equation in semi-classical limit}\label{sec: thermal effects}

We give a precise statement on thermal effects for the gravitational Hartree equation in  Aki-Dolbeault-Sparber \cite{ADS}, and we present the connection to our results. To highlight the role of temperature, separating $\beta(s)=T\beta_0(s)$ with $T>0$, we introduce the free energy of the form 
$$\mathcal{J}_T^{\hbar}(\gamma):=\mathcal{E}^\hbar(\gamma)+T\mathcal{C}_0^\hbar(\gamma),$$
where $\mathcal{C}_0^\hbar(\gamma)=\textup{Tr}^\hbar(\beta_0(\gamma))$, and consider the corresponding minimization problem
\begin{equation}\label{VP with T}
\mathcal{J}^{\hbar}_{T, \min}:=\min_{\gamma \in \mathcal{A}_M^\hbar}\mathcal{J}^{\hbar}_T(\gamma).
\end{equation}
Then, we define the \textit{maximal temperature} by 
$$T^{\hbar}_* := \sup\Big\{T > 0 ~|~ \mathcal{J}^{\hbar}_{T, \min} < 0\Big\}$$
and the \textit{critical temperature} by
$$T^{\hbar}_c := \sup\Big\{T> 0 ~|~ \mathcal{J}^{\hbar}_{T, \min} = \mathcal{J}^{\hbar}_{0,\min}+\tau \beta_0(M),\quad \forall \tau \in (0,T]\Big\}.$$
\begin{theorem}[Aki-Dolbeault-Sparber \cite{ADS}]\label{result-ADS with T}
Let $M>0$. If $\beta_0$ satisfies \textup{\textbf{(A1)}} and \textup{\textbf{(A4)}}, then the maximal temperature $T^{\hbar}_*$ is positive (possibly infinite), and the following statements hold:
\begin{enumerate}[$(i)$]
\item If $T<T_*^\hbar$, then the minimization problem \eqref{VP with T} possesses a minimizer. A minimizer $\mathcal{Q}_\hbar$ solves the self-consistent equation
\begin{equation*}
\mathcal{Q}_\hbar =\tilde{\beta}_0\Big(\frac{-\mu_\hbar-\hat{E}_\hbar}{T}\Big),\quad\textup{with some }\mu_\hbar>0,
\end{equation*}
where $\hat{E}_\hbar=-\frac{\hbar^2}{2}\Delta-U_\hbar$ is the quantum mean-field Hamiltonian with $U_\hbar=\frac{1}{|x|}*\rho_{\mathcal{Q}_\hbar}^\hbar$.
\item The critical temperature $T^\hbar_c$ satisfies $0 < T^\hbar_c < T^\hbar_*$, and a minimizer is a pure state if and only if $T \in [0, T^\hbar_c]$.
\end{enumerate}
\end{theorem}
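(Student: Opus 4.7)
The plan is to handle positivity of $T_*^\hbar$, existence with the Euler-Lagrange equation for $T<T_*^\hbar$, and the pure-state phase transition at $T_c^\hbar$ in sequence, each leveraging the variational machinery already assembled in Section 2 and the preceding parts of the paper. For $T_*^\hbar>0$, I would exhibit a one-parameter family of $L^2$-normalized Gaussians $\phi_\sigma$ of width $\sigma$, set $\gamma_\sigma=\tfrac{M}{(2\pi\hbar)^3}\ket{\phi_\sigma}\bra{\phi_\sigma}\in\mathcal{A}_M^\hbar$, and explicitly compute
$$\mathcal{E}^\hbar(\gamma_\sigma)=c_1\frac{M\hbar^2}{\sigma^2}-c_2\frac{M^2}{\sigma},\qquad \mathcal{C}_0^\hbar(\gamma_\sigma)=(2\pi\hbar)^3\beta_0\!\left(\tfrac{M}{(2\pi\hbar)^3}\right),$$
with $c_1,c_2>0$. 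Taking $\sigma$ large enough that $\mathcal{E}^\hbar(\gamma_\sigma)<0$ and then $T$ small enough that the constant Casimir contribution does not overwhelm the energy gain gives $\mathcal{J}_{T,\min}^\hbar\le\mathcal{J}_T^\hbar(\gamma_\sigma)<0$, hence $T_*^\hbar>0$.

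For part (i), I would run the direct method on a minimizing sequence $\{\gamma_n\}\subset\mathcal{A}_M^\hbar$. Uniform control of $\textup{Tr}^\hbar\big((-\hbar^2\Delta)\gamma_n\big)$ and $\mathcal{C}_0^\hbar(\gamma_n)$ follows from the Sobolev inequality \eqref{Sobolev} together with Hardy-Littlewood-Sobolev, exactly as in the argument leading to Lemma \ref{uniform boundedness} (specialized to fixed $\hbar$). Weak-$*$ precompactness in $\mathfrak{S}^1\cap\dot{\mathfrak{H}}^1$ extracts a limit $\mathcal{Q}_\hbar$; the loss of mass at infinity would be ruled out by a Cazenave-Lions concentration-compactness scheme, with non-vanishing forced by $\mathcal{J}_{T,\min}^\hbar<0$ and non-splitting forced by the strict subadditivity $\mathcal{J}_{T,\min}^\hbar(M)<\mathcal{J}_{T,\min}^\hbar(\theta M)+\mathcal{J}_{T,\min}^\hbar((1-\theta)M)$ that comes from the attractive Coulomb scaling. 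Lower semicontinuity of each summand of $\mathcal{J}_T^\hbar$ then yields a minimizer. To obtain the self-consistent equation, I would take admissible variations $\gamma\mapsto\gamma+\varepsilon\delta\gamma$ preserving $\mathcal{M}^\hbar$; on $\textup{Ran}(\mathcal{Q}_\hbar)$ the first-order condition reads $\hat{E}_\hbar+T\beta_0'(\mathcal{Q}_\hbar)=-\mu_\hbar$, which inverts to $\mathcal{Q}_\hbar=\tilde{\beta}_0\bigl((-\mu_\hbar-\hat{E}_\hbar)/T\bigr)$, while $\mu_\hbar>0$ follows from a Pohozaev-type identity in the spirit of Lemma \ref{Pohozaev} combined with $\mathcal{J}_{T,\min}^\hbar<0$.

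For part (ii), the key observation is that on the set of pure states $\gamma=\tfrac{M}{(2\pi\hbar)^3}\ket{\psi}\bra{\psi}$ the Casimir $\mathcal{C}_0^\hbar(\gamma)$ takes a state-independent value; in the ADS normalization this is recorded as $\beta_0(M)$, so the pure-state minimum of $\mathcal{J}_T^\hbar$ equals $\mathcal{J}_{0,\min}^\hbar+T\beta_0(M)$, realized by the Hartree ground state. Since $T\mapsto\mathcal{J}_{T,\min}^\hbar$ is concave (infimum of affine functions of $T$) and satisfies $\mathcal{J}_{T,\min}^\hbar\le\mathcal{J}_{0,\min}^\hbar+T\beta_0(M)$, the supremum $T_c^\hbar$ is well-defined. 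For $T_c^\hbar>0$ I would invoke the strictly positive spectral gap $\lambda_2-\lambda_1>0$ of the mean-field Hamiltonian $\hat E_\hbar^{(0)}$ at the $T=0$ minimizer: redistributing an infinitesimal mass $\varepsilon$ into any higher eigenmode costs energy at least $\varepsilon(\lambda_2-\lambda_1)$, whereas the Casimir benefit, by strict convexity of $\beta_0$ from \textbf{(A1)}, is $T$ times a quantity vanishing faster than $\varepsilon$. For $T$ small enough the cost dominates, so no mixed competitor beats the pure-state value and $T_c^\hbar\ge T$.

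The main obstacle will be the strict separation $T_c^\hbar<T_*^\hbar$ together with the sharp equivalence ``minimizer is pure $\iff T\in[0,T_c^\hbar]$''. To produce $T\in(T_c^\hbar,T_*^\hbar)$, I would analyze the Euler-Lagrange identity: at $T=T_c^\hbar$ the operator $(-\mu_\hbar-\hat{E}_\hbar)/T$ is tangent to $\supp\tilde{\beta}_0$ at the first excited eigenvalue, and a small increase of $T$ must drive the second eigenvalue strictly into the support, producing a genuinely mixed minimizer whose free energy lies strictly below the affine pure-state line $\mathcal{J}_{0,\min}^\hbar+T\beta_0(M)$ while $\mathcal{J}_{T,\min}^\hbar<0$ persists by continuity. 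Making this rigorous demands quantitative control of the continuity and strict monotonicity of $\mu_\hbar$ in $T$ and a careful spectral analysis of the self-consistent operator across the transition; this phase-transition analysis is the technical heart of \cite{ADS}.
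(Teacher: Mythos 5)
The paper does not prove this theorem; it is stated in Appendix B as a direct citation of Theorem 1.1 of Aki--Dolbeault--Sparber \cite{ADS}, with the remark that the $\hbar=1$ case extends verbatim to fixed $\hbar>0$. Your proposal is therefore a reconstruction of the \cite{ADS} argument rather than a comparison with anything in this paper, and it contains a concrete error that would need to be repaired even as a sketch.

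The error is in the heuristic establishing $T_c^\hbar>0$. You claim that redistributing a small mass $\varepsilon$ from the ground eigenmode into a higher eigenmode yields a Casimir benefit that is ``$T$ times a quantity vanishing faster than $\varepsilon$.'' This is false, and if it were true it would contradict the theorem itself. Writing $M'=M/(2\pi\hbar)^3$ for the occupation number of the pure state, the Casimir of the perturbed state $\gamma_\varepsilon=(M'-\varepsilon)\ket{\psi_1}\bra{\psi_1}+\varepsilon\ket{\psi_2}\bra{\psi_2}$ is, up to the $(2\pi\hbar)^3$ factor, $\beta_0(M'-\varepsilon)+\beta_0(\varepsilon)=\beta_0(M')-\beta_0'(M')\varepsilon+o(\varepsilon)$, where \textbf{(A1)} only kills the second-order term $\beta_0(\varepsilon)=o(\varepsilon)$. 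The benefit is thus $\beta_0'(M')\varepsilon+o(\varepsilon)$, genuinely linear in $\varepsilon$. The correct heuristic is a competition between two linear rates: the kinetic/potential cost $\sim(\lambda_2-\lambda_1)\varepsilon$ against the Casimir gain $\sim T\beta_0'(M')\varepsilon$, which balances at a finite positive temperature and explains why $T_c^\hbar$ is finite. Your version, taken at face value, would force $T_c^\hbar=\infty$. Beyond this, you explicitly defer the key points of part $(ii)$ --- the strict inequality $T_c^\hbar<T_*^\hbar$, the continuity/monotonicity of $\mu_\hbar$ in $T$, and the ``only if'' direction of the pure-state characterization (that a minimizer is necessarily mixed for $T>T_c^\hbar$) --- so the proposal does not constitute a proof of the full statement; it is at best a plan whose technical core is acknowledged to be the bulk of \cite{ADS}.
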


\begin{remark} $(i)$ The main result in \cite{ADS} is stated for $\hbar = 1$, but it can be extended directly to general but fixed $\hbar > 0$. \\
$(ii)$ If $T^{\hbar}_*<\infty$, a minimizer may not exist at too high temperature. Theorem \ref{result-ADS with T} $(iii)$ shows that Bose-Einstein condensation can be observed at sufficiently low (not necessarily zero) temperature, which is physically relevant. Possibility of non-existence of a minimizer and existence of pure states are completely quantum mechanical phenomena.
\end{remark}

Combining with some of results in this paper, we show that the quantum thermal effects vanish in the semi-classical limit $\hbar\to0$ in the following sense.

\begin{proposition}
As $\hbar>0$ goes to zero, the maximal temperature $T^{\hbar}_*$ goes to infinity, while the critical temperature $T_c^\hbar$ converges to zero.
\end{proposition}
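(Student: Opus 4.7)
The plan is to derive both claims by invoking Theorem~\ref{thm1}~(i) and Lemma~\ref{improved uniform boundedness}~(i) with the Casimir $\beta$ replaced by $T\beta_0$ for an arbitrary but fixed $T>0$. The key observation is that the conditions \textup{\textbf{(A1)}}--\textup{\textbf{(A4)}} (which we implicitly assume on $\beta_0$) are stable under positive rescaling, so the entire semi-classical machinery of the paper applies uniformly to the family $\{T\beta_0\}_{T>0}$, with implicit constants that depend on $T$ but not on $\hbar$.

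For the divergence $T^\hbar_*\to\infty$, fix $T>0$. Applying Theorem~\ref{thm1}~(i) (equivalently, the strict upper bound \eqref{min free energy 1} coming from Proposition~\ref{prop: CtoQ}) with $\beta=T\beta_0$ yields
$$\limsup_{\hbar\to 0}\mathcal{J}^\hbar_{T,\min}\leq\mathcal{J}_{T,\min}<0,$$
so $\mathcal{J}^\hbar_{T,\min}<0$ for all sufficiently small $\hbar$. By the very definition of $T^\hbar_*$, this says $T^\hbar_*>T$, and since $T>0$ was arbitrary, $T^\hbar_*\to\infty$.

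For $T^\hbar_c\to 0$, fix $T>0$ and let $\mathcal{Q}_\hbar$ denote the minimizer of \eqref{VP with T} whose existence for small $\hbar$ is a further consequence of Theorem~\ref{thm1}~(i) applied with $\beta=T\beta_0$. Lemma~\ref{improved uniform boundedness}~(i) then supplies a constant $C_T$, independent of $\hbar\in(0,1]$, such that $\|\mathcal{Q}_\hbar\|_{\mathcal{B}(L^2)}\leq C_T$. On the other hand, any pure state $\gamma$ satisfying the mass constraint $\mathcal{M}^\hbar(\gamma)=M$ is necessarily of the form $\gamma=\frac{M}{(2\pi\hbar)^3}\ket{\phi}\bra{\phi}$ with $\|\phi\|_{L^2}=1$, hence $\|\gamma\|_{\mathcal{B}(L^2)}=\frac{M}{(2\pi\hbar)^3}\to\infty$ as $\hbar\to 0$. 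These two facts are incompatible once $\hbar$ is small enough, so $\mathcal{Q}_\hbar$ cannot be a pure state, and Theorem~\ref{result-ADS with T}~(ii) then forces $T>T^\hbar_c$; arbitrariness of $T$ gives $T^\hbar_c\to 0$.

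The mildly subtle point (which I would view as the main obstacle) is that Lemma~\ref{improved uniform boundedness}~(i) must remain valid after the rescaling $\beta_0\mapsto T\beta_0$. Inspection of its proof shows that the implicit constants depend on $\beta$ only through the structural parameters in \textup{\textbf{(A1)}}--\textup{\textbf{(A4)}}, which for fixed $T$ merely rescale; the $\mathcal{O}(1)$ versus $\mathcal{O}(\hbar^{-3})$ dichotomy between a uniformly bounded operator norm and that of a pure state at fixed mass $M$ is therefore robust, and is precisely what drives $T^\hbar_c$ to zero.
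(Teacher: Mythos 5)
Your argument for $T_*^\hbar\to\infty$ is identical to the paper's: apply Proposition~\ref{prop: CtoQ} (equivalently Theorem~\ref{thm1}~(i)) with $\beta=T\beta_0$ to get $\mathcal{J}^\hbar_{T,\min}<0$ for small $\hbar$, hence $T_*^\hbar\geq T$.

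For $T_c^\hbar\to 0$, however, you take a genuinely different and in fact more elementary route. The paper rules out a pure-state minimizer by invoking Weyl's law (Proposition~\ref{Weyl's law for profile}) to show that the number of negative eigenvalues of $\hat{E}_\hbar=-\tfrac{\hbar^2}{2}\Delta-U_\hbar$ tends to infinity, so that the self-consistent minimizer $\tilde{\beta}(-\mu_\hbar-\hat{E}_\hbar)$ has rank $\to\infty$. You instead observe that the uniform operator-norm bound $\sup_{\hbar}\|\mathcal{Q}_\hbar\|<\infty$ from Lemma~\ref{improved uniform boundedness}~(i) is incompatible with the operator norm $\tfrac{M}{(2\pi\hbar)^3}$ that any mass-$M$ pure state must carry. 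Both arguments are valid; yours bypasses the Weyl's-law machinery (which the paper has already built and so is ``free'' for them to reuse) in favour of a soft counting of scales, at the cost of having to confirm (as you correctly note) that the constants in Lemma~\ref{improved uniform boundedness} depend on $\beta$ only through structural parameters and hence survive the rescaling $\beta_0\mapsto T\beta_0$ for fixed $T$. Either way the conclusion $T>T_c^\hbar$ for small $\hbar$ follows from Theorem~\ref{result-ADS with T}~(ii), and the arbitrariness of $T$ finishes the proof.
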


\begin{proof}
Fix any $T>0$, and set $\beta(s)=T\beta_0(s)$, where $\beta_0$ is a function satisfying $\textup{\textbf{(A1)}}$ - $\textup{\textbf{(A4)}}$ (so is $\beta$). Then, by Proposition \ref{prop: CtoQ}, the minimum free energy $\mathcal{J}^{\hbar}_{T, \min}$ is negative, in other word, $T_*^\hbar\geq T$, for all sufficiently small $\hbar>0$. On the other hand, Weyl's law (Proposition \ref{Weyl's law for profile}) implies that the number of negative eigenvalues of $\hat{E}_\hbar=-\frac{\hbar^2}{2}\Delta-U_\hbar$ goes to infinity. Hence, if $\hbar$ is small enough, $\mathcal{Q}_\hbar$ is not a pure state, and by Proposition \ref{result-ADS with T} $(ii)$, we have $T_c^\hbar\leq T$. Since $T$ is arbitrary, we conclude that $T_*^\hbar\to \infty$ and $T_c^\hbar\to 0$.
\end{proof}


\begin{thebibliography}{20}

\bibitem{ADS} G. L. Aki, J. Dolbeault and C. Sparber, \emph{Thermal effects in gravitational Hartree systems}, Ann. Henri Poincar\'e \textbf{12} (2011), no. 6, 1055--1079.

\bibitem{AKN1} L. Amour, M. Khodja and J. Nourrigat, \emph{The classical limit of the Heisenberg and time-dependent Hartree-Fock equations: the Wick symbol of the solution}, Math. Res. Lett. \textbf{20} (2013), no. 1, 119--139.

\bibitem{AKN2} L. Amour, M. Khodja and J. Nourrigat, \emph{The semiclassical limit of the time dependent Hartree-Fock equation: the Weyl symbol of the solution}, Anal. PDE \textbf{6} (2013), no. 7, 1649--1674.

\bibitem{Antonov} V. A. Antonov, 
\emph{Remarks on the problems of stability in stellar dynamics},
Sov. Astron. \textbf{4} (1961), 859--867.

\bibitem{Antonov2} V. A. Antonov,
\emph{Solution of the problem of stability of a stellar system with the Emden density law and spherical velocity distribution}, 
J. Leningr. Univ. Ser. Mekh. Astron. \textbf{7} (1962), 135--146.


\bibitem{BPSS} N. Benedikter, M. Porta, C. Saffirio and B. Schlein, \emph{From the Hartree dynamics to the Vlasov equation}, Arch. Ration. Mech. Anal. \textbf{221} (2016), no. 1, 273--334. 


\bibitem{BJPSS} N. Benedikter, V. Jaksi\'c; M. Porta, C. Saffirio and B. Schlein, \emph{Mean-field evolution of fermionic mixed states}, Comm. Pure Appl. Math. \textbf{69} (2016), no. 12, 2250--2303.

\bibitem{BHLNS} N. Bez, Y. Hong, S. Lee, S. Nakamura and Y. Sawano, \emph{On the Strichartz estimates for orthonormal systems of initial data with regularity}. Adv. Math. \textbf{354} (2019), 106736, 37 pp.

\bibitem{BT} J. Binney and S. Tremaine, \emph{Galactic Dynamics}, Princeton University Press, Princeton (1987).

\bibitem{BDF1} A. Bove, G. Da Prato and G. Fano, \emph{An existence proof for the Hartree time-dependent problem with bounded two-body interaction}, Comm. Math. Phys. \textbf{37} (1974), 183--191.

\bibitem{BDF2} A. Bove, G. Da Prato and G. Fano, \emph{On the Hartree time-dependent problem}, Comm. Math. Phys. \textbf{49} (1976), no. 1, 25--33. 
	
\bibitem{Castella} F. Castella, \emph{$L^2$ solutions to the Schr\"odinger-Poisson system: existence, uniqueness, time behaviour, and smoothing effects,} Math. Models Methods Appl. Sci. \textbf{7} (1997), no. 8, 1051--1083. 

\bibitem{CazLi} T. Cazenave and P.-L. Lions, \emph{Orbital stability of standing waves for some nonlinear Schr\"odinger equations}, Comm. Math. Phys. \textbf{85} (1982), no. 4, 549--561.

\bibitem{Chadam} J.M. Chadam, \emph{The time-dependent Hartree-Fock equations with Coulomb two-body interaction}, Comm. Math. Phys. \textbf{46} (1976), no. 2, 99--104.

\bibitem{CHS} W. Choi, Y. Hong and J. Seok, \emph{Uniqueness and symmetry of ground states for higher-order equations}, Calc. Var. Partial Differential Equations \textbf{57} (2018), no. 3, Art. 77, 23 pp.

\bibitem{DF} J.-P. Dias and M. Figueira, \emph{Conservation laws and time decay for the solutions of some nonlinear Schr\"odinger-Hartree equations and systems}, J. Math. Anal. Appl. \textbf{84} (1981), no. 2, 486--508.

\bibitem{DFL} J. Dolbeault, P. Felmer and M. Lewin, \emph{Orbitally stable states in generalized Hartree-Fock theory}, Math. Models Methods Appl. Sci. \textbf{19} (2009), no. 3, 347--367. 

\bibitem{DSS} J. Dolbeault, \'O. S\'anchez, and  J. Soler,
\emph{Asymptotic behaviour for the Vlasov-Poisson system in the stellar-dynamics case}, Arch. Ration. Mech. Anal., {\bf 171(3)} (2004), 301–-327.

\bibitem{Frank} R. L. Frank, \emph{Cwikel's theorem and the CLR inequality}, J. Spectr. Theory \textbf{4} (2014), no. 1, 1--21. 

\bibitem{FLLS} R. Frank, M. Lewin, E.-H. Lieb and R. Seiringer, \emph{Strichartz inequality for orthonormal functions}, J. Eur. Math. Soc. (JEMS) \textbf{16} (2014), no. 7, 1507--1526. 

\bibitem{FS} R. Frank and J. Sabin, \emph{Restriction theorems for orthonormal functions, Strichartz inequalities, and uniform Sobolev estimates}, Amer. J. Math. \textbf{139} (2017), no. 6, 1649--1691.

\bibitem{Glassey} R. Glassey, \emph{The Cauchy problem in kinetic theory}, SIAM (1996), xii+241 pp.

\bibitem{GMP} F. Golse, C. Mouhot and T. Paul, \emph{On the mean field and classical limits of quantum mechanics}, Comm. Math. Phys. \textbf{343} (2016), no. 1, 165--205. 

\bibitem{Guo1} Y. Guo, \emph{Variational method for stable polytropic galaxies}, Arch. Ration. Mech. Anal. \textbf{150} (1999), no. 3, 209--224. 

\bibitem{Guo2} Y. Guo, \emph{On the generalized Antonov stability criterion}, Contemp. Math. \textbf{263} (2000), 85--107.

\bibitem{GuoRein1} Y. Guo and G. Rein, \emph{Stable steady states in stellar dynamics}, Arch. Ration. Mech. Anal. \textbf{147} (1999), no. 3, 225--243.

\bibitem{GuoRein2} Y. Guo and G. Rein, \emph{Isotropic steady states in galactic dynamics}, Comm. Math. Phys. \textbf{219} (2001), no. 3, 607--629.

\bibitem{GuoRein4}  Y. Guo and G. Rein, \emph{A non-variational approach to nonlinear stability in stellar dynamics applied to the King model}, Comm. Math. Phys. \textbf{271} (2007), no. 2, 489--509. 

\bibitem{HmidiKeraani} T. Hmidi and S. Keraani, 
\emph{Blowup theory for the critical nonlinear Schr\"odinger equations revisited}, Int. Math. Res. Not. 2005, no. 46, 2815--2828.

\bibitem{Horst} E. Horst, 
\emph{On the asymptotic growth of the solutions of the Vlasov-Poisson system}, Math. Methods Appl. Sci. \textbf{16} (1993), no. 2, 75--86. 

\bibitem{Lenzmann} E. Lenzmann, 
\emph{Uniqueness of ground states for pseudorelativistic Hartree equations}, Anal. PDE \textbf{2} (2009), no. 1, 1--27. 

\bibitem{Lions1} P.-L. Lions, \emph{The concentration-compactness principle in the calculus of variations. The locally compact case. I}, Ann. Inst. H. Poincar\'e Anal. Non Lin\'eaire \textbf{1} (1984), no. 2, 109--145. 

\bibitem{Lions2} P.-L. Lions, \emph{The concentration-compactness principle in the calculus of variations. The locally compact case. II}, Ann. Inst. H. Poincar\'e Anal. Non Lin\'eaire \textbf{1} (1984), no. 4, 22--283.

\bibitem{LP} P.-L. Lions and T. Paul, \emph{Sur les mesures de Wigner}, Rev. Mat. Iberoamericana \textbf{9} (1993), no. 3, 553--618.

\bibitem{LioPer} P.-L. Lions and B. Perthame, \emph{Propagation of moments and regularity for the 3-dimensional Vlasov-Poisson system}, Invent. Math. \textbf{105} (1991), no. 2, 415--430.


\bibitem{LMR2} M. Lemou, F. M\'{e}hats and P. Rapha\"el, \emph{The orbital stability of the ground states and the singularity formation for the gravitational Vlasov Poisson system}, Arch. Ration. Mech. Anal. \textbf{189} (2008), no. 3, 425--468. 

\bibitem{LMR3} M. Lemou, F. M\'{e}hats and P. Rapha\"el, \emph{A new variational approach to the stability of gravitational Systems}, Comm. Math. Phys. \textbf{302} (2011), 161--224.

\bibitem{LMR4} M. Lemou, F. M\'{e}hats and P. Rapha\"el, \emph{Orbital stability of spherical galactic models}, Invent. Math. \textbf{187} (2012), no. 1, 145--194. 

\bibitem{LiebThirring} E. H. Lieb and W. Thirring, \emph{Bound for the kinetic energy of fermions which proves the stability of matter}, Phys. Rev. Lett. \textbf{35} (1975), 687--689.

\bibitem{MRW} P. A. Markowich, G. Rein and G. Wolansky, \emph{Existence and nonlinear stability of stationary states of the Schrödinger-Poisson system}, J. Statist. Phys. \textbf{106} (2002), no. 5-6, 1221--1239.

\bibitem{Mouhot} C. Mouhot, 
\emph{Stabilit\'e orbitale pour le syst\'eme de Vlasov-Poisson gravitationnel (d'apr\'es Lemou-M\'ehats-Rapha\"el, Guo, Lin, Rein et al.)}, S\'eminaire Bourbaki. Vol. 2011/2012. Expos\'es 1043--1058. Ast\'erisque No. 352 (2013), Exp. No. 1044, vii, 35--82. 

\bibitem{Pffaffelmoser} K. Pfaffelmoser, \emph{Global classical solutions of the Vlasov-Poisson system in three dimensions for general initial data}, J. Differential Equations \textbf{95} (1992), no. 2, 281--303.

\bibitem{ReedSimon1} M. Reed and B. Simon, \emph{Methods of modern mathematical physics. I. Functional analysis}, Second edition. Academic Press, 1980. xv+400 pp.

\bibitem{ReedSimon4} M. Reed and B. Simon, \emph{Methods of modern mathematical physics. IV. Analysis of operators}, Academic Press, 1978. xv+396 pp.

\bibitem{Rein} G. Rein, \emph{Collisionless kinetic equations from astrophysics—the Vlasov-Poisson system}, Handbook of differential equations: evolutionary equations. Vol. III, 383–476, Handb. Differ. Equ., Elsevier/North-Holland, Amsterdam, 2007.

\bibitem{Rumin} M. Rumin, \emph{Spectral density and Sobolev inequalities for pure and mixed states}, Geom. Funct. Anal. \textbf{20} (2010), no. 3, 817--844. 

\bibitem{SS} \'O. S\'anchez and J. Soler, \emph{Orbital stability for polytropic galaxies}, Ann. Inst. Henri Poincaré Anal. Non Linéaire {\bf 23(6)} (2006), 781--802.

\bibitem{Schaeffer} J. Schaeffer, \emph{Global existence of smooth solutions to the Vlasov-Poisson system in three dimensions}, Comm. Partial Differential Equations \textbf{16} (1991), no. 8-9, 1313--1335. 

\bibitem{Schaeffer2} J. Schaeffer, \emph{Steady states in galactic dynamics}, Arch. Ration. Mech. Anal. {\bf 172} (2004), 1--19.

\bibitem{Simon} B. Simon, \emph{Trace ideals and their applications}, Second edition. Mathematical Surveys and Monographs, \textbf{120}, American Mathematical Society, 2005. viii+150 pp. 

\bibitem{Wolansky} G. Wolansky, \emph{On nonlinear stability of polytropic galaxies}, Ann. Inst. Henri Poincaré {\bf 16} (1999), 15--48. 
\end{thebibliography}
\end{document}